\numberwithin{equation}{section}
\def\beq{\begin{equation}}
\def\eeq{\end{equation}}
\def\bit{\begin{itemize}}
	\def\eit{\end{itemize}}
\def\eqalign#1{\null\vcenter{\def\\{\cr}\openup\jot\m@th
		\ialign{\strut$\displaystyle{##}$\hfil&$\displaystyle{{}##}$\hfil
			\crcr#1\crcr}}\,}
\newcommand{\re}{\text{\upshape Re\,}}
\newcommand{\im}{\text{\upshape Im\,}}
\newcommand{\PP}{{\mathbb P}}
\newcommand{\R}{{\mathbb R}}
\newcommand{\C}{{\mathbb C}}
\newcommand{\E}{{\mathbb E}}
\newcommand{\N}{{\mathbb N}}
\newcommand{\T}{{\mathbb T}}
\newcommand{\ds}{\displaystyle}
\def\bigO{{\cal O}}
\newenvironment{proof}%
{\rm \trivlist \item[\hskip \labelsep{\bf Proof. }]}%
{\hspace*{\fill}$\Box$\endtrivlist}
\def\Xint#1{\mathchoice
   {\XXint\displaystyle\textstyle{#1}}%
   {\XXint\textstyle\scriptstyle{#1}}%
   {\XXint\scriptstyle\scriptscriptstyle{#1}}%
   {\XXint\scriptscriptstyle\scriptscriptstyle{#1}}%
   \!\int}
\def\XXint#1#2#3{{\setbox0=\hbox{$#1{#2#3}{\int}$}
     \vcenter{\hbox{$#2#3$}}\kern-.5\wd0}}
\def\ddashint{\Xint=}
\def\dashint{\Xint-}
\newcommand{\vertiii}[1]{{\left\vert\kern-0.25ex\left\vert\kern-0.25ex\left\vert #1 
    \right\vert\kern-0.25ex\right\vert\kern-0.25ex\right\vert}}
\begin{document}
	\tikzset{middlearrow/.style={
			decoration={markings,
				mark= at position 0.6 with {\arrow{#1}} ,
			},
			postaction={decorate}
		}
	}
	
\newtheorem{theorem}{Theorem}
\newtheorem{acknowledgement}[theorem]{Acknowledgement}
\newtheorem{remark}[theorem]{Remark}
\newtheorem{lemma}[theorem]{Lemma}
\newtheorem{proposition}[theorem]{Proposition}
\newtheorem{corollary}[theorem]{Corollary}
\numberwithin{equation}{section}
\numberwithin{theorem}{section}
	
\def\Xint#1{\mathchoice
	{\XXint\displaystyle\textstyle{#1}}%
	{\XXint\textstyle\scriptstyle{#1}}%
	{\XXint\scriptstyle\scriptscriptstyle{#1}}%
	{\XXint\scriptscriptstyle\scriptscriptstyle{#1}}%
	\!\int}
\def\XXint#1#2#3{{\setbox0=\hbox{$#1{#2#3}{\int}$ }
	\vcenter{\hbox{$#2#3$ }}\kern-.59\wd0}}
\def\ddashint{\Xint=}
\def\dashint{\Xint-}

\let\oldbibliography\thebibliography
\renewcommand{\thebibliography}[1]{\oldbibliography{#1}
\setlength{\itemsep}{-0.2pt}}

\allowdisplaybreaks
	
\title{Toeplitz determinants with a one-cut regular potential \\  and Fisher--Hartwig singularities \\ I. Equilibrium measure supported on the unit circle}
\author{Elliot Blackstone\footnote{Department of Mathematics, University of Michigan, Ann Arbor, USA. E-mail: eblackst@umich.edu}, Christophe Charlier\footnote{Centre for Mathematical Sciences, Lund University, 22100 Lund, Sweden. E-mail: 	christophe.charlier@math.lu.se}, Jonatan Lenells\footnote{Department of Mathematics, KTH Royal Institute of Technology, 100 44 Stockholm, Sweden. E-mail: jlenells@kth.se}}
	
\maketitle
	
\tikzset{->-/.style={decoration={
				markings,
				mark=at position #1 with {\arrow{latex}}},postaction={decorate}}}
	
\tikzset{-<-/.style={decoration={
				markings,
				mark=at position #1 with {\arrowreversed{latex}}},postaction={decorate}}}

\begin{abstract}
We consider Toeplitz determinants whose symbol has: (i) a one-cut regular potential $V$, (ii) Fisher--Hartwig singularities, and (iii) a smooth function in the background. The potential $V$ is associated with an equilibrium measure that is assumed to be supported on the whole unit circle. For constant potentials $V$, the equilibrium measure is the uniform measure on the unit circle and our formulas reduce to well-known results for Toeplitz determinants with Fisher--Hartwig singularities. For non-constant $V$, our results appear to be new even in the case of no Fisher--Hartwig singularities. As applications of our results, we derive various statistical properties of a determinantal point process which generalizes the circular unitary ensemble.
\end{abstract}

\noindent
{\small{\sc AMS Subject Classification (2020)}: 41A60, 47B35, 60G55.}

\noindent
{\small{\sc Keywords}: Asymptotics, Toeplitz determinants, Fisher-Hartwig singularities, Riemann-Hilbert problems.}

\section{Introduction}
In this work, we obtain large $n$ asymptotics of the Toeplitz determinant 
\begin{align}
D_n(\vec\alpha,\vec\beta,V,W)& := \det (f_{j-k})_{j,k=0,\dots,n-1}, \qquad f_k:=\frac{1}{2\pi}\int_0^{2\pi}f(e^{i\theta})e^{-ik\theta}d\theta,
 \label{uni: toeplitz det}
\end{align}
where $f$ is supported on the unit circle $\mathbb{T}=\{z\in\mathbb{C}:|z|=1\}$ and is of the form
\begin{align}\label{uni: f}
    f(z)=e^{-nV(z)}e^{W(z)}\omega(z), \qquad z\in \mathbb{T}.
\end{align}
We assume that $V$ and $W$ are analytic in a neighborhood of $\mathbb{T}$ and that the potential $V$ is real-valued on $\mathbb{T}$. The function $\omega(z)=\omega(z;\vec\alpha,\vec\beta)$ in \eqref{uni: f} contains Fisher--Hartwig singularities and is defined in \eqref{uni: omega} below. 
Since the functions $V$ and $W$ are analytic on $\mathbb{T}$, there exists an open annulus $U$ containing $\mathbb{T}$ on which they admit Laurent series representations of the form
\begin{align}
& V(z) = V_{0} + V_{+}(z) + V_{-}(z), & & V_{+}(z) = \sum_{k=1}^{+\infty} V_{k}z^{k}, & & V_{-}(z) = \sum_{k=-\infty}^{-1} V_{k}z^{k},  \label{V as a Laurent series} \\
& W(z) = W_{0} + W_{+}(z) + W_{-}(z), & & W_{+}(z) = \sum_{k=1}^{+\infty} W_{k}z^{k}, & & W_{-}(z) = \sum_{k=-\infty}^{-1} W_{k}z^{k}, \label{W as a Laurent series}
\end{align}
where $V_{k}, W_{k}\in \mathbb{C}$ are the Fourier coefficients of $V$ and $W$, i.e. $V_{k} = \frac{1}{2\pi}\int_0^{2\pi}V(e^{i\theta})e^{-ik\theta}d\theta$ and similarly for $W_{k}$. 
Associated to $V$ there is an equilibrium measure $\mu_{V}$, which is the unique minimizer of the functional
\begin{equation}\label{functional}
\mu \mapsto \iint \log \frac{1}{|z-s|} d\mu(z)d\mu(s) + \int V(z)d\mu(z)
\end{equation}
among all Borel probability measures $\mu$ on $\mathbb{T}$. In this paper we make the assumption that $\mu$ is supported on the whole unit circle. We further assume that $V$ is regular, i.e. that the function $\psi$ given by
\begin{align}\label{psi solved explicitly}
\psi(z) = \frac{1}{2\pi} - \frac{1}{2\pi} \sum_{\ell = 1}^{+\infty} \ell ( V_{\ell}z^{\ell} + \overline{V_{\ell}}z^{-\ell}), \qquad z \in U,
\end{align}
is strictly positive on $\mathbb{T}$. Under these assumptions, we show in Appendix \ref{section:equilibrium measure} that
\begin{equation}\label{uni: dmu}
d\mu_V(e^{i\theta})=\psi(e^{i\theta})d\theta, \qquad \theta\in [0,2\pi).
\end{equation}

The function $\omega$ appearing in \eqref{uni: f} is defined by
\begin{equation}\label{uni: omega}
\omega(z) = \prod_{k=0}^{m} \omega_{\alpha_{k}}(z)\omega_{\beta_{k}}(z),
\end{equation}
where $\omega_{\alpha_{k}}(z)$ and $\omega_{\beta_{k}}(z)$ are defined for $z=e^{i\theta}$ by
\begin{equation}\label{uni: FH pieces}
\omega_{\alpha_{k}}(z) = |z-t_k|^{\alpha_{k}}, \quad \omega_{\beta_{k}}(z) = e^{i(\theta -\theta_{k})\beta_{k}} \times \left\{ \begin{array}{l l}
e^{i\pi\beta_{k}}, & \mbox{ if } 0 \leq \theta < \theta_{k}, \\
e^{-i \pi \beta_{k}}, & \mbox{ if } \theta_{k} \leq \theta < 2\pi,
\end{array}  \right. \quad \theta \in [0,2\pi),
\end{equation}
and   
\begin{equation}\label{uni: theta k}
t_k:=e^{i\theta_k}, \qquad 0 =\theta_{0} < \theta_{1} < \cdots < \theta_{m} < 2\pi.
\end{equation}
At $t_k=e^{i\theta_{k}}$, the functions $\omega_{\alpha_{k}}$ and $\omega_{\beta_{k}}$ have root- and jump-type singularities, respectively. Note that $\omega_{\beta_{k}}$ is continuous at $z=1$ if $k \neq 0$. 
We allow the parameters $\theta_{1},\dots,\theta_{m}$ to vary with $n$, but we require them to lie in a compact subset of $(0,2\pi)_{\mathrm{ord}}^{m}:=\{(\theta_{1},\dots,\theta_{m}): 0 < \theta_{1} < \cdots < \theta_{m} < 2\pi\}$. 

To summarize, the $n \times n$ Toeplitz determinant \eqref{uni: toeplitz det} depends on $n$, $m$, $V$, $W$, $\vec{t} = (t_{1},\dots,t_{m})$, $\vec{\alpha}=(\alpha_1,\dots,\alpha_m)$ and $\vec{\beta} = (\beta_{1},\dots,\beta_{m})$, but for convenience the dependence on $m$ and $\vec{t}$ is omitted in the notation $D_n(\vec\alpha,\vec\beta,V,W)$. We now state our main result.

\begin{theorem}[Large $n$ asymptotics of $D_{n}(\vec{\alpha},\vec{\beta},V,W)$]\label{theorem U} 
Let $m \in \mathbb{N} :=\{0,1,\dots\}$, and let $t_{k}=e^{i\theta_{k}}$, $\alpha_{k}\in \mathbb{C}$ and $\beta_{k} \in \mathbb{C}$ be such that
\begin{equation*}
0 = \theta_{0} < \theta_{1} < \dots < \theta_{m} < 2\pi, \quad \mbox{ and } \quad \re \alpha_{k} > -1, \quad \re \beta_{k} \in (-\tfrac{1}{2},\tfrac{1}{2}) \quad \mbox{ for } k=0,\dots,m.
\end{equation*}  
Let $V: \mathbb{T}\to\mathbb{R}$ and $W: \mathbb{T}\to \mathbb{C}$, and suppose $V$ and $W$ can be extended to analytic functions in a neighborhood of $\mathbb{T}$. Suppose that the equilibrium measure $d\mu_V(e^{i\theta})=\psi(e^{i\theta})d\theta$ associated to $V$ is supported on $\mathbb{T}$ and that $\psi > 0$ on $\mathbb{T}$. Then, as $n \to \infty$, 
\begin{equation}\label{asymp thm}
D_{n}(\vec{\alpha},\vec{\beta},V,W) = \exp\left(C_{1} n^{2} + C_{2} n + C_{3} \log n + C_{4} + \bigO \Big( n^{-1+2\beta_{\max}} \Big)\right),
\end{equation}
with $\beta_{\max} = \max \{ |\re \beta_{1}|,\dots,|\re \beta_{m}| \}$ and 
\begin{align*}
 C_{1} = & -\frac{V_{0}}{2}-\frac{1}{2}\int_0^{2\pi}V(e^{i\theta}) d\mu_{V}(e^{i\theta}), \\
 C_{2} = &\; \sum_{k=0}^{m} \frac{\alpha_{k}}{2}(V(t_{k})-V_{0}) - \sum_{k=0}^{m} 2i\beta_{k} \im(V_{+}(t_{k})) + \int_0^{2\pi}W(e^{i\theta})d\mu_V(e^{i\theta}), \\
 C_{3} = &\; \sum_{k=0}^{m} \bigg( \frac{\alpha_{k}^{2}}{4}-\beta_{k}^{2} \bigg), \\
 C_{4} = &\;\sum_{\ell = 1}^{+\infty} \ell W_{\ell}W_{-\ell} - \sum_{k=0}^{m} \frac{\alpha_{k}}{2}(W(t_{k})-W_{0}) + \sum_{k=0}^{m}\beta_{k} \big(W_{+}(t_{k})-W_{-}(t_{k})\big)  \\
& + \sum_{0 \leq j < k \leq m} \bigg\{ \frac{\alpha_{j} i \beta_{k} - \alpha_{k} i \beta_{j}}{2}(\theta_{k}-\theta_{j }-\pi) + \bigg( 2\beta_{j}\beta_{k}-\frac{\alpha_{j}\alpha_{k}}{2} \bigg) \log |t_{j}-t_{k}| \bigg\} \\
& + \sum_{k=0}^{m} \log \frac{G(1+\frac{\alpha_{k}}{2}+\beta_{k})G(1+\frac{\alpha_{k}}{2}-\beta_{k})}{G(1+\alpha_{k})} + \sum_{k=0}^m\frac{\beta_{k}^{2}-\frac{\alpha_{k}^{2}}{4}}{\psi(t_k)}\left(\frac{1}{2\pi}-\psi(t_k)\right),
\end{align*}
where $G$ is Barnes' $G$-function. Furthermore, the above asymptotics are uniform for all $\alpha_{k}$ in compact subsets of $\{z \in \mathbb{C}: \re z >-1\}$, for all $\beta_{k}$ in compact subsets of $\{z  \in \mathbb{C}: \re z \in (-\frac{1}{2},\frac{1}{2})\}$, and for all $(\theta_{1},\dots,\theta_{m})$ in compact subsets of $(0,2\pi)_{\mathrm{ord}}^{m}$. The above asymptotics can also be differentiated with respect to $\alpha_{0},\dots,\alpha_{m},\beta_{0},\dots,\beta_{m}$ as follows: if $k_{0},\dots,k_{2m+1}\in \mathbb{N}$, $k_{0}+\dots+k_{2m+1}\geq 1$ and $\partial^{\vec{k}}:=\partial_{\alpha_{0}}^{k_{0}}\dots\partial_{\alpha_{m}}^{k_{m}}\partial_{\beta_{0}}^{k_{m+1}}\dots\partial_{\beta_{m}}^{k_{2m+1}}$, then
\begin{align}\label{der of error in thm}
\partial^{\vec{k}}\Big( \log D_{n}(\vec{\alpha},\vec{\beta},V,W) - \log \widehat{D}_{n} \Big) = \bigO \bigg( \frac{(\log n)^{k_{m+1}+\dots+k_{2m+1}}}{n^{1-2\beta_{\max}}} \bigg), \qquad \mbox{as } n \to + \infty,
\end{align}
where $\widehat{D}_{n}$ denotes the right-hand side of \eqref{asymp thm} without the error term.
\end{theorem}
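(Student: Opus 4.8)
The plan is to combine differential identities for $\log D_{n}(\vec\alpha,\vec\beta,V,W)$ with a Deift--Zhou steepest descent analysis of the Riemann--Hilbert problem for orthogonal polynomials on the unit circle. I would start from the standard $2\times 2$ matrix RHP for $Y=Y(z;n)$ whose solution encodes the orthogonal polynomials of the weight $f$ on $\mathbb{T}$, with jump contour $\mathbb{T}$, off-diagonal jump entry $z^{-n}f(z)$, and normalization $Y(z)z^{-n\sigma_{3}}\to I$ as $z\to\infty$. The conditions $\re\alpha_{k}>-1$ make $f$ locally integrable, while $\re\beta_{k}\in(-\tfrac12,\tfrac12)$ keeps $D_{n}$ and the parametrix machinery well posed; existence of $Y$ for large $n$ follows a posteriori from the small-norm analysis below. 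From $Y$ one reads off the classical differential identities expressing $\partial_{\alpha_{k}}\log D_{n}$ and $\partial_{\beta_{k}}\log D_{n}$ in terms of $Y$ and $Y'$ near $t_{k}$. The strategy is then to find the large $n$ asymptotics of these right-hand sides, integrate in $\alpha_{0},\dots,\alpha_{m},\beta_{0},\dots,\beta_{m}$, and fix the constant of integration at the base point $\vec\alpha=\vec\beta=\vec 0$; the latter requires the asymptotics of $D_{n}(\vec 0,\vec 0,V,W)$, which I would obtain by the same RHP analysis together with two further differential identities, one along the family $tW$ ($t\in[0,1]$) and one along $sV$ ($s\in[0,1]$), bottoming out at $V=W=0$ where $D_{n}=1$. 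Along $s\mapsto sV$ the relevant density is $\psi_{sV}=(1-s)\tfrac{1}{2\pi}+s\psi>0$, a probability density supported on all of $\mathbb{T}$, so $\mu_{sV}$ stays one-cut regular and the whole analysis is uniform in $s$.

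The steepest descent analysis proceeds through the usual chain $Y\mapsto T\mapsto S\mapsto R$. The first transformation uses the $g$-function $g(z)=\int_{0}^{2\pi}\log(z-e^{i\theta})\,d\mu_{V}(e^{i\theta})$ to normalize $T$ at infinity; since $\operatorname{supp}\mu_{V}=\mathbb{T}$, the transformed problem has a jump supported only on $\mathbb{T}$. The second transformation opens lenses on the two sides of $\mathbb{T}$ via the standard factorization of the oscillatory jump into lower/upper triangular and off-diagonal factors; the hypothesis $\psi>0$ on $\mathbb{T}$ is exactly what guarantees that the associated $\phi$-function has real part of the correct sign, so the lens jumps decay exponentially away from $\mathbb{T}$ and the singularities $t_{k}$. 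One then builds a global parametrix $N$ from the Szeg\H{o} function of $e^{W}\omega$, so that $N$ carries the correct jump and has the expected integrable singularities at the $t_{k}$, and a local parametrix $P^{(t_{k})}$ in each disk $D(t_{k},\varepsilon)$ from confluent hypergeometric functions, matched to $N$ on $\partial D(t_{k},\varepsilon)$. A point specific to the non-constant potential is that the confluent hypergeometric model is naturally adapted to a local density $\tfrac{1}{2\pi}$, whereas here the conformal map at $t_{k}$ has derivative proportional to $\psi(t_{k})$; the extra rescaling by a power of $2\pi\psi(t_{k})$ needed to reconcile the two is the source of the new term $\sum_{k=0}^{m}\frac{\beta_{k}^{2}-\alpha_{k}^{2}/4}{\psi(t_{k})}\big(\tfrac{1}{2\pi}-\psi(t_{k})\big)$ in $C_{4}$. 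Finally $R:=S\cdot P^{-1}$ (with $P=N$ outside the disks) solves a small-norm RHP with jumps $I+\bigO(n^{-1+2\beta_{\max}})$ on the circles $\partial D(t_{k},\varepsilon)$ and $I+\bigO(e^{-cn})$ elsewhere, so $R=I+\bigO(n^{-1+2\beta_{\max}})$ with a computable expansion $R=I+n^{-1}R^{(1)}+\dots$; since every object above is analytic in $(\vec\alpha,\vec\beta,\vec\theta)$, all estimates are uniform on the stated compact parameter sets.

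Substituting the asymptotics of $Y$, obtained by unravelling $Y=\dots R\dots$, into the differential identities gives: the $C_{1}n^{2}$ and $C_{2}n$ contributions from the $g$- and Szeg\H{o}-function terms, and the $C_{3}\log n$ and $C_{4}$ contributions from the confluent hypergeometric parametrices (the Barnes $G$-factors), the strong Szeg\H{o} constant $\sum_{\ell\ge1}\ell W_{\ell}W_{-\ell}$, the pairwise interaction terms indexed by $0\le j<k\le m$, and the subleading correction $R^{(1)}$. Integrating these in the Fisher--Hartwig parameters and matching the constant of integration with the reference case assembled from the $tW$- and $sV$-deformations yields \eqref{asymp thm}. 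For the differentiated statement \eqref{der of error in thm}, I would note that $R-I$ extends to an analytic function of $(\vec\alpha,\vec\beta)$ on a fixed complex neighbourhood of the parameter domain and is uniformly $\bigO(n^{-1+2\beta_{\max}})$ there; Cauchy's integral formula then converts this into bounds on all parameter-derivatives, the factors $(\log n)^{k_{m+1}+\dots+k_{2m+1}}$ arising because the $\beta_{k}$ enter only through combinations of the form $n^{\pm 2\beta_{k}}=e^{\pm 2\beta_{k}\log n}$, each $\partial_{\beta_{k}}$ producing one power of $\log n$.

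I expect the main obstacle to be the local analysis at the Fisher--Hartwig points together with the bookkeeping of $C_{4}$: constructing and matching the confluent hypergeometric parametrices with exactly the right power of $2\pi\psi(t_{k})$, and then tracking every lower-order term so that the novel potential-dependent contribution, the Barnes $G$-factors, the strong Szeg\H{o} constant and the interaction sums combine into the stated closed form. A secondary but lengthy obstacle is carrying out the chain of parameter integrations and verifying that the constants of integration fit together, which is where the precise numerical coefficients in $C_{2}$ and $C_{4}$ must be pinned down with care.
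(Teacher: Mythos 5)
Your overall Riemann--Hilbert architecture coincides with the paper's: the same RH problem for $Y$, the chain $Y\mapsto T\mapsto S\mapsto R$ with a $g$-function built from $\mu_V$, a Szeg\H{o}-function global parametrix, confluent hypergeometric local parametrices whose conformal maps have derivative $\propto 2\pi\psi(t_k)$ at $t_k$, and a small-norm argument with expansion $R=I+n^{-1}R^{(1)}+\dots$. Where you genuinely diverge is the integration strategy. The paper uses the differential identity for a \emph{single} deformation, $s\mapsto sV$ with $s\in[0,1]$ (Proposition \ref{prop: int V}), and fixes the ``constant of integration'' at $s=0$ by importing the full $V=0$ Fisher--Hartwig asymptotics of Ehrhardt and Deift--Its--Krasovsky as a black box (Theorem \ref{thm:V=0}); all of the $\alpha_k,\beta_k,W$-dependent structure ($C_3$, the Barnes $G$ factors, the interaction sums, the strong Szeg\H{o} constant) is inherited from that input, and only $C_1$, part of $C_2$, and the new $\psi$-dependent constant are produced by the $s$-integration. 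You instead propose to rederive everything from scratch by integrating in $\alpha_0,\dots,\beta_m$, then in $tW$ and $sV$ down to $D_n=1$. That route is viable in principle (it is essentially how the $V=0$ case was originally proved), and it buys self-containedness; but it is far longer, and the steps you describe as ``classical'' --- the localized differential identities in $\alpha_k,\beta_k$, their evaluation using the hypergeometric parametrix at the singular points, and the integration producing the Barnes $G$ constants --- constitute the bulk of the work and are not sketched. The paper's route avoids all of this at the price of citing the literature.

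There is also one concrete gap in your accounting of the new constant. You attribute the term $\sum_{k}\frac{\beta_k^2-\alpha_k^2/4}{\psi(t_k)}\big(\frac{1}{2\pi}-\psi(t_k)\big)$ to the rescaling by $2\pi\psi(t_k)$ in the local variable, but a pure replacement of the local scale $n$ by $2\pi\psi(t_k)n$ in the factor $n^{\frac{\alpha_k^2}{4}-\beta_k^2}$ would produce a constant of the form $\sum_k\big(\frac{\alpha_k^2}{4}-\beta_k^2\big)\log\big(2\pi\psi(t_k)\big)$, which is not the expression appearing in $C_4$; so as written your mechanism does not account for the stated constant. In the paper this term does not come from the local matching at all: it comes from the subleading term $R^{(1)}$ of the small-norm solution, inserted into the differential identity (the contribution $I_{3,s}$ in \eqref{I3s}), evaluated via the identity $\frac{1}{i\pi}\dashint_{\mathbb{T}}\frac{V'(w)}{w-z}dw=\frac{1}{z}(1-2\pi\psi(z))$ of Lemma \ref{lemma: dashint ids}, and then integrated along the $sV$-deformation. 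If you follow your own route, the analogous $O(n^{-1})$ corrections from $R^{(1)}$ must be tracked through your $\alpha_k,\beta_k$- and $sV$-identities (and, in the $sV$ leg, one must keep the $s$-dependence of the density $\psi_s=\frac{1-s}{2\pi}+s\psi$ entering $R^{(1)}$ before integrating in $s$); the rescaling heuristic alone will not pin down the constant term, and reconciling what your mechanism produces with the form stated in the theorem is precisely the bookkeeping you cannot skip. Your Cauchy-estimate argument for \eqref{der of error in thm} is a reasonable alternative to the paper's direct differentiation of the jump estimates (Remark \ref{remark:diff}), and the rest of the uniformity claims match the paper.
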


\subsection{History and related work}
In the case when the potential $V(z)$ in \eqref{uni: f} vanishes identically, the asymptotic evaluation of Toeplitz determinants of the form \eqref{uni: toeplitz det} has a long and distinguished history.
The first important result was obtained by Szeg\H{o} in 1915 who determined the leading behavior of $D_{n}(\vec{\alpha},\vec{\beta},V,W)$ in the case when $\vec{\alpha} = \vec{\beta} = \vec{0}$ and $V = 0$, that is, when the symbol $f(z)$ is given by $f(z) = e^{W(z)}$. In our notation, this result, known as the first Szeg\H{o} limit theorem \cite{S1915}, can be expressed as
\begin{align}\label{firstSzegotheorem}
D_{n}(\vec{0},\vec{0},0,W) = \exp\left(\frac{n}{2\pi}\int_0^{2\pi}W(e^{i\theta})d\theta + o(n)\right) \qquad \text{as $n \to \infty$}.
\end{align}
Later, in the 1940's, it became clear from the pioneering work of Kaufmann and Onsager that a more detailed understanding of the error term in (\ref{firstSzegotheorem}) could be used to compute two-point correlation functions in the two-dimensional Ising model in the thermodynamic limit \cite{KO1949}. This inspired Szeg\H{o} to seek for a stronger version of (\ref{firstSzegotheorem}). The outcome was the so-called strong Szeg\H{o} limit theorem \cite{S1952}, which in our notation states that 
\begin{align}\label{SSLT}
D_{n}(\vec{0},\vec{0},0,W) = \exp\left(\frac{n}{2\pi}\int_0^{2\pi}W(e^{i\theta})d\theta + \sum_{\ell = 1}^{+\infty} \ell W_{\ell}W_{-\ell} + o(1)\right) \qquad \text{as $n \to \infty$}.
\end{align}
We observe that if $V = 0$, then $d\mu_V(e^{i\theta}) = \frac{d\theta}{2\pi}$; thus Szeg\H{o}'s theorems are consistent with our main result, Theorem \ref{asymp thm}, in the special case when $\vec{\alpha} = \vec{\beta} = \vec{0}$ and $V = 0$. (The strong Szeg\H{o} theorem actually holds under much weaker assumptions on $W$ than what is assumed in this paper, see e.g. the survey \cite{BasorSzego}.)

In a groundbreaking paper from 1968, Fisher and Hartwig introduced a class of singular symbols $f(z)$ for which they convincingly conjectured a detailed asymptotic formula for the associated Toeplitz determinant \cite{FisherHartwig}. The Fisher--Hartwig class consists of symbols $f(z)$ of the form (\ref{uni: f}) with $V = 0$. In our notation, the Fisher--Hartwig conjecture can be formulated as
\begin{align}\label{FHconjecture}
D_{n}(\vec{\alpha},\vec{\beta},0,W) \sim \exp\left(\frac{n}{2\pi}\int_0^{2\pi}W(e^{i\theta})d\theta + \sum_{k=0}^{m} \bigg( \frac{\alpha_{k}^{2}}{4}-\beta_{k}^{2} \bigg)\log{n} + C_4\right) \qquad \text{as $n \to \infty$},
\end{align}
where $C_4$ is a constant to be determined, and the Fisher--Hartwig singularities are encoded in the vectors $\vec{\alpha}$ and $\vec{\beta}$.
Symbols with Fisher--Hartwig singularities arise in many applications. For example, in the 1960's, Lenard proved \cite{L1964} that no Bose-Einstein condensation exists in the ground state for a one-dimensional system of impenetrable bosons by considering Toeplitz determinants with symbols of the form $f(z) = |z-e^{i\theta_1}| |z - e^{-i\theta_1}|$ with $\theta_1 \in \R$. Lenard's proof hinges on an inequality whose proof was provided by Szeg\H{o}, see \cite[Theorem 2]{L1964}. We observe that (\ref{FHconjecture}) is consistent with Theorem \ref{asymp thm} in the special case when $V = 0$.
 
There are too many works devoted to proofs and generalizations of the Fisher--Hartwig conjecture (\ref{FHconjecture}) for us to cite them all, but we refer to \cite{Widom2, Basor, BS1986} for some early works, and to \cite{BT1991, BasMor, B1995, DIK2013} for four reviews.
The current state-of-the-art for non-merging singularities and for $\vec{\alpha}$, $\vec{\beta}$ in compact subsets was set by Ehrhardt in his 1997 Ph.D. thesis (see \cite{Ehr}) and by Deift, Its, and Krasovsky in \cite{DIK2011, DeiftItsKrasovsky}. Since our proof builds on the results for the case of $V = 0$, we have included a version of the asymptotic formulas of \cite{Ehr, DIK2011, DeiftItsKrasovsky} in Theorem \ref{thm:V=0}. 
We also refer to \cite{ClKr, Fahs} for studies of merging Fisher--Hartwig singularities with $V=0$,  and to \cite{ChCl2} for the case of large discontinuities with $V=0$.

Note that if $V=V_{0}$ is a constant, then $D_{n}(\vec{\alpha},\vec{\beta},V_{0},W)=e^{-n^{2}V_{0}}D_{n}(\vec{\alpha},\vec{\beta},0,W)$.

The novelty of the present work is that we consider symbols that include a non-constant potential $V$; we are not aware of any previous works on the unit circle including such potentials. Our main result is formulated under the assumption that $\re \beta_{k} \in (-\frac{1}{2},\frac{1}{2})$ for all $k$. The general case where $\re \beta_{k} \in \mathbb{R}$ was treated in the case of $V=0$ in \cite{DIK2011}. 
Asymptotic formulas for Hankel determinants with a one-cut regular potential $V$ and Fisher--Hartwig singularities were obtained in \cite{BerWebbWong, Charlier, ChGha}, and the corresponding multi-cut case was considered in \cite{CFWW2021}. Our proofs draw on some of the techniques developed in these papers.

\subsection{Application: A determinantal point process on the unit circle}
The Toeplitz determinant \eqref{uni: toeplitz det} admits the Heine representation
\begin{align}
D_n(\vec\alpha,\vec\beta,V,W)& = \frac{1}{n!(2\pi)^n}\int_{[0,2\pi]^{n}} \prod_{1 \leq j < k \leq n} |e^{i\phi_{k}}-e^{i\phi_{j}}|^{2}\prod_{j=1}^{n} f(e^{i\phi_{j}})d\phi_{j}.
\end{align}
This suggests that the results of Theorem \ref{asymp thm} can be applied to obtain information about the point process on $\mathbb{T}$ defined by the probability measure
\begin{align}\label{point process}
\frac{1}{n! (2\pi)^n Z_{n}} \prod_{1 \leq j < k \leq n} |e^{i\phi_{k}}-e^{i\phi_{j}}|^{2}\prod_{j=1}^{n} e^{-nV(e^{i\phi_{j}})}d\phi_{j}, \qquad \phi_{1},\dots,\phi_{n}\in[0,2\pi),
\end{align}
where $Z_{n} = D_{n}(\vec{0},\vec{0},V,0)$ is the normalization constant (also called the partition function). In what follows, we use Theorem \ref{asymp thm} to obtain smooth statistics, log statistics, counting statistics, and rigidity bounds for the point process (\ref{point process}).
In the case of constant $V$, the point process (\ref{point process}) describes the distribution of eigenvalues of matrices drawn from the circular unitary ensemble and has already been widely studied. We are not aware of any earlier work where the process \eqref{point process} is considered explicitly for non-constant $V$. However, the point process \eqref{point process}, but with $nV(e^{i\phi})$ replaced by the highly oscillatory potential $V(e^{in\phi})$, is studied in \cite{ForRods, BaikOscillatory}. We also refer to \cite{BS2021, BF2022} for other determinantal generalizations of the circular unitary ensemble.

Let $\mathsf{p}_{n}(z):=\prod_{j=1}^{n}(e^{i\phi_{j}}-z)$ be the characteristic polynomial associated to \eqref{point process}, and define $\log \mathsf{p}_{n}(z)$ for $z \in \mathbb{T}\setminus \{e^{i\phi_{1}},\dots,e^{i\phi_{n}}\}$ by
\begin{align*}
\log \mathsf{p}_{n}(z) := \sum_{j=1}^{n} \log (e^{i\phi_{j}}-z), \qquad \im \log (e^{i\phi_{j}}-z) := \frac{\phi_{j} + \arg_{0} z}{2} + \begin{cases}
\frac{3\pi}{2}, & \mbox{if } 0 \leq \phi_{j} < \arg_{0} z, \\
\frac{\pi}{2}, & \mbox{if } \arg_{0} z < \phi_{j} < 2\pi,
\end{cases}
\end{align*}
where $\arg_{0} z \in [0,2\pi)$. In particular, if $\theta_{k}\notin \{\phi_{1},\dots,\phi_{n}\}$,
\begin{align}\label{the jumps are always a mess}
e^{2i \beta_{k}(\im \log \mathsf{p}_{n}(t_{k})-n\theta_{k}-n\pi)} = \prod_{j=1}^{n} \omega_{\beta_{k}}(e^{i\phi_{j}}) = e^{-i\beta_{k}(\pi+\theta_{k}) n }e^{2\pi i \beta_{k}N_{n}(\theta_{k})}\prod_{j=1}^{n} e^{i\beta_{k}\phi_{j}},
\end{align}
where $N_{n}(\theta):=\#\{\phi_{j} \in [0,\theta ]\} \in \{0,1,\dots,n\}$. Using the first identity in \eqref{the jumps are always a mess} and the fact that $\{\theta_{0},\dots,\theta_{m}\} \cap \{\phi_{1},\dots,\phi_{n}\} = \emptyset$ with probability one, it is straightforward to see that
\begin{align}\label{lol2}
& \mathbb{E}\bigg[\prod_{j=1}^{n}e^{W(e^{i\phi_{j}})}\prod_{k=0}^{m}e^{\alpha_{k}\re \log \mathsf{p}_{n}(t_{k})}e^{2i\beta_{k}(\im \log \mathsf{p}_{n}(t_{k})-n\theta_{k}-n\pi)}\bigg] = \frac{D_{n}(\vec{\alpha},\vec{\beta},V,W)}{D_{n}(\vec{0},\vec{0},V,0)}.
\end{align}
Furthermore, if $\beta_{0}=-\beta_{1}-\dots-\beta_{m}$, then the second identity in \eqref{the jumps are always a mess} together with \eqref{lol2} implies
\begin{align}\label{MGF appli}
\frac{D_{n}(\vec{\alpha},\vec{\beta},V,W)}{D_{n}(\vec{0},\vec{0},V,0)} = \prod_{k=1}^{m} e^{-i \beta_{k} \theta_{k} n } \times \mathbb{E}\bigg[\prod_{j=1}^{n}e^{W(e^{i\phi_{j}})}\prod_{k=0}^{m}|\mathsf{p}_{n}(t_{k})|^{\alpha_{k}}e^{2 \pi i\beta_{k}N_{n}(\theta_{k})}\bigg].
\end{align}

\begin{lemma}\label{lemma:some id for alpha and beta}
For any $z \in \mathbb{T}$, we have
\begin{align}
& \frac{V(z)-V_{0}}{2} = \int_{0}^{2\pi} \log |e^{i\theta}-z|d\mu_{V}(e^{i\theta}), \label{id1 lemma} \\
& \frac{\arg_{0} z}{2\pi} - \frac{\im V_{+}(z) - \im V_{+}(1)}{\pi} = \int_{0}^{\arg_{0} z} d\mu_{V}(e^{i\theta}). \label{id2 lemma}
\end{align}
\end{lemma}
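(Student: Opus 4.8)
The plan is to reduce both identities to elementary Fourier computations, exploiting that every object in sight is explicit. I would obtain \eqref{id1 lemma} from the Euler--Lagrange (Frostman) conditions characterizing the minimizer $\mu_{V}$ of the functional \eqref{functional}: there is a constant $\ell_{V}\in\mathbb{R}$ such that $-2\int_{0}^{2\pi}\log|e^{i\theta}-z|\,d\mu_{V}(e^{i\theta})+V(z)=\ell_{V}$ for $z\in\mathrm{supp}\,\mu_{V}$, with ``$\geq$'' in place of ``$=$'' on $\mathbb{T}\setminus\mathrm{supp}\,\mu_{V}$. Since by hypothesis $\mathrm{supp}\,\mu_{V}=\mathbb{T}$, the equality holds for every $z\in\mathbb{T}$, i.e.\ $\int_{0}^{2\pi}\log|e^{i\theta}-z|\,d\mu_{V}(e^{i\theta})=\tfrac12(V(z)-\ell_{V})$. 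To pin down $\ell_{V}$, I would integrate this relation in $z=e^{i\phi}$ against $\tfrac{d\phi}{2\pi}$, interchange the order of integration (Fubini is justified since $\log|e^{i\theta}-e^{i\phi}|$ is integrable for the product of $\mu_{V}$ with arclength), and use the classical fact that the logarithmic potential of the uniform measure on $\mathbb{T}$ vanishes on $\mathbb{T}$, namely $\tfrac{1}{2\pi}\int_{0}^{2\pi}\log|e^{i\theta}-e^{i\phi}|\,d\phi=0$. The left side then vanishes, the right side equals $\tfrac12(V_{0}-\ell_{V})$, so $\ell_{V}=V_{0}$ and \eqref{id1 lemma} follows.

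For \eqref{id2 lemma} I would work directly with the explicit density $d\mu_{V}(e^{i\theta})=\psi(e^{i\theta})\,d\theta$ of \eqref{psi solved explicitly}, established in Appendix \ref{section:equilibrium measure}. Because $V$ is analytic in a neighborhood of $\mathbb{T}$, its Fourier coefficients $V_{\ell}$ decay exponentially, so all series below converge absolutely and uniformly and may be integrated term by term. Using that $V$ is real-valued on $\mathbb{T}$, so $\overline{V_{\ell}}=V_{-\ell}$, rewrite $\psi(e^{i\theta})=\tfrac{1}{2\pi}\big(1-\sum_{\ell\neq0}|\ell|V_{\ell}e^{i\ell\theta}\big)$. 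Integrating over $\theta\in[0,\arg_{0}z]$ and using $\int_{0}^{\arg_{0}z}e^{i\ell\theta}\,d\theta=(e^{i\ell\arg_{0}z}-1)/(i\ell)$ gives $\int_{0}^{\arg_{0}z}d\mu_{V}(e^{i\theta})=\tfrac{\arg_{0}z}{2\pi}-\tfrac{1}{2\pi}\sum_{\ell\neq0}\tfrac{\mathrm{sgn}\,\ell}{i}\,V_{\ell}(e^{i\ell\arg_{0}z}-1)$. Splitting this sum into its $\ell\geq1$ and $\ell\leq-1$ parts, the first equals $\tfrac1i(V_{+}(z)-V_{+}(1))$ and, using $\overline{V_{\ell}}=V_{-\ell}$ once more together with $z=e^{i\arg_{0}z}$, the second equals $-\tfrac1i(\overline{V_{+}(z)}-\overline{V_{+}(1)})$; since $\tfrac1i(w-\bar w)=2\im w$, the whole sum is $2\im(V_{+}(z)-V_{+}(1))$, and substituting back yields exactly \eqref{id2 lemma}. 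The same circle of ideas re-proves \eqref{id1 lemma}: expand $\log|e^{i\theta}-e^{i\phi}|=-\sum_{k\geq1}\tfrac1k\cos k(\theta-\phi)$, integrate term by term against $\psi$ using $\int_{0}^{2\pi}\psi(e^{i\theta})e^{\mp ik\theta}\,d\theta=-kV_{\pm k}$ for $k\geq1$, and recognize the result as $\tfrac12(V_{+}(e^{i\phi})+V_{-}(e^{i\phi}))=\tfrac12(V(e^{i\phi})-V_{0})$.

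There is no genuine obstacle here. The only points requiring a word of care are: (i) the justification of term-by-term integration, which is immediate from the exponential decay of the $V_{\ell}$; and (ii) in the $\psi$-based derivation of \eqref{id1 lemma}, the integrable logarithmic singularity of $\theta\mapsto\log|e^{i\theta}-e^{i\phi}|$ at $\theta=\phi$, handled either by dominated convergence applied to the partial sums of its Fourier series or by first excising a short arc around $\theta=\phi$. One should also keep in mind that on $\mathbb{T}$ one has $V_{-}(z)=\overline{V_{+}(z)}$ and $V_{-}(1)=\overline{V_{+}(1)}$ — this is precisely where the hypothesis that $V$ is real-valued on $\mathbb{T}$ enters — which is what turns the recombined sums into $2\re V_{+}(z)$ and $2\im(V_{+}(z)-V_{+}(1))$, respectively.
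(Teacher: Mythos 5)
Your proof is correct and follows essentially the same route as the paper: identity \eqref{id1 lemma} via the Euler--Lagrange equality on $\mathbb{T}$ and averaging over $z=e^{i\phi}$ with $\int_{0}^{2\pi}\log|e^{i\phi}-e^{i\theta}|\frac{d\phi}{2\pi}=0$ to get $\ell=V_{0}$, and identity \eqref{id2 lemma} by the direct term-by-term computation from \eqref{psi solved explicitly} and \eqref{V as a Laurent series} that the paper leaves implicit. The extra Fourier-series re-derivation of \eqref{id1 lemma} is a harmless bonus but not a genuinely different argument.
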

\begin{proof}
The equilibrium measure $\mu_{V}$ is uniquely characterized by the Euler-Lagrange variational equality
\begin{align}
2 \int_{0}^{2\pi} \log |z-e^{i\theta}| d\mu_{V}(e^{i\theta}) = V(z) - \ell, & & \mbox{ for } z \in \mathbb{T}, 
\label{var equality}
\end{align} 
where $\ell \in \mathbb{R}$ is a constant, see e.g. \cite{SaTo}.
In particular, the identity \eqref{id1 lemma} is equivalent to the statement that $\ell=V_{0}$. 
The equality $\ell=V_{0}$ can be established by integrating \eqref{var equality} over $z=e^{i\phi} \in \mathbb{T}$ and dividing by $2\pi$:
\begin{align*}
\ell = \int_{0}^{2\pi} \ell \frac{d\phi}{2\pi} = \int_{0}^{2\pi} \bigg( V(z) - 2 \int_{0}^{2\pi} \log|e^{i\phi}-e^{i\theta}|d\mu_{V}(e^{i\theta}) \bigg)\frac{d\phi}{2\pi} = V_{0},
\end{align*}
where we have used the well-known (see e.g. \cite[Example 0.5.7]{SaTo}) identity $\int_{0}^{2\pi} \log|e^{i\phi}-e^{i\theta}| \frac{d\phi}{2\pi} =0$ for $\theta \in [0,2\pi)$. This proves \eqref{id1 lemma}.
The identity \eqref{id2 lemma} follows from \eqref{psi solved explicitly} and \eqref{V as a Laurent series}.
\end{proof}

Combining \eqref{MGF appli}, Theorem \ref{theorem U}  and Lemma \ref{lemma:some id for alpha and beta}, we get the following.
\begin{theorem}\label{thm:MGF}
Let $m \in \mathbb{N}$, and let $t_{k}=e^{i\theta_{k}}$, $\alpha_{0},\dots,\alpha_{m}\in \mathbb{C}$ and $u_{1},\dots,u_{m} \in \mathbb{C}$ be such that
\begin{align*}
& 0 = \theta_{0} < \theta_{1} < \dots < \theta_{m} < 2\pi, \quad \mbox{ and } \quad \re \alpha_{k} > -1, \quad \im u_{k} \in (-\pi,\pi) \quad \mbox{for all } k.
\end{align*}  
Let $V: \mathbb{T}\to\mathbb{R}$, $W: \mathbb{T}\to \mathbb{C}$, and suppose $V$, $W$ can be extended to analytic functions in a neighborhood of $\mathbb{T}$. Suppose that the equilibrium measure $d\mu_V(e^{i\theta})=\psi(e^{i\theta})d\theta$ associated to $V$ is supported on $\mathbb{T}$ and that $\psi > 0$ on $\mathbb{T}$. Then, as $n \to \infty$, we have 
\begin{align}\label{exp in thm}
\mathbb{E}\bigg[\prod_{j=1}^{n}e^{W(e^{i\phi_{j}})}\prod_{k=0}^{m}|\mathsf{p}_{n}(t_{k})|^{\alpha_{k}}\prod_{k=1}^{m}e^{u_{k}N_{n}(\theta_{k})}\bigg] = \exp\left( \tilde{C}_{1} n + \tilde{C}_{2} \log n + \tilde{C}_{3} + \bigO \Big( n^{-1+\frac{u_{\max}}{\pi}} \Big)\right),
\end{align}
with $u_{\max} = \max \{ |\im u_{1}|,\dots,|\im u_{m}| \}$ and 
\begin{align}
\tilde{C}_{1} = &\; \sum_{k=0}^{m} \alpha_{k}\int_{0}^{2\pi} \log|e^{i\phi}-t_{k}|d\mu_V(e^{i\phi}) + \sum_{k=1}^{m} u_{k} \int_{0}^{\theta_{k}} d\mu_V(e^{i\phi}) 
+ \int_0^{2\pi} W(e^{i\phi})d\mu_V(e^{i\phi}), \\
\tilde{C}_{2} = &\; \sum_{k=0}^{m} \bigg( \frac{\alpha_{k}^{2}}{4}+\frac{u_{k}^{2}}{4\pi^{2}} \bigg), \\
\tilde{C}_{3} = &\;\sum_{\ell = 1}^{+\infty} \ell W_{\ell}W_{-\ell} - \sum_{k=0}^{m} \alpha_{k} \frac{W_{+}(t_{k})+W_{-}(t_{k})}{2} + \sum_{k=0}^{m} \frac{u_{k}}{\pi}\frac{W_{+}(t_{k}) -W_{-}(t_{k})}{2i} \\
& + \sum_{0 \leq j < k \leq m} \bigg\{ \frac{\alpha_{j} u_{k} - \alpha_{k} u_{j}}{4\pi}(\theta_{k}-\theta_{j }-\pi) - \bigg( \frac{u_{j}u_{k}}{2\pi^{2}}+\frac{\alpha_{j}\alpha_{k}}{2} \bigg) \log |t_{j}-t_{k}| \bigg\} \\
& + \sum_{k=0}^{m} \log \frac{G(1+\frac{\alpha_{k}}{2}+\frac{u_{k}}{2\pi i})G(1+\frac{\alpha_{k}}{2}-\frac{u_{k}}{2\pi i})}{G(1+\alpha_{k})} - \sum_{k=0}^m\frac{\frac{u_{k}^{2}}{\pi^{2}}+\alpha_{k}^{2}}{4\psi(t_k)}\left(\frac{1}{2\pi}-\psi(t_k)\right),
\end{align}
where $G$ is Barnes' $G$-function and $u_{0}:=-u_{1}-\dots-u_{m}$. Furthermore, the above asymptotics are uniform for all $\alpha_{k}$ in compact subsets of $\{z \in \mathbb{C}: \re z >-1\}$, for all $u_{k}$ in compact subsets of $\{z  \in \mathbb{C}: \im z \in (-\pi,\pi)\}$, and for all $(\theta_{1},\dots,\theta_{m})$ in compact subsets of $(0,2\pi)_{\mathrm{ord}}^{m}$. The above asymptotics can also be differentiated with respect to $\alpha_{0},\dots,\alpha_{m},u_{1},\dots,u_{m}$ as follows: if $k_{0},\dots,k_{2m}\in \mathbb{N}$, $k_{0}+\dots+k_{2m}\geq 1$ and $\partial^{\vec{k}}:=\partial_{\alpha_{0}}^{k_{0}}\dots\partial_{\alpha_{m}}^{k_{m}}\partial_{u_{1}}^{k_{m+1}}\dots\partial_{u_{m}}^{k_{2m}}$, then as $n \to + \infty$
\begin{align*}
\partial^{\vec{k}}\bigg( \log \mathbb{E}\bigg[\prod_{j=1}^{n}e^{W(e^{i\phi_{j}})}\prod_{k=0}^{m}|\mathsf{p}_{n}(t_{k})|^{\alpha_{k}}\prod_{k=1}^{m}e^{u_{k}N_{n}(\theta_{k})}\bigg] - \log \widehat{E}_{n} \bigg) = \bigO \bigg( \frac{(\log n)^{k_{m+1}+\dots+k_{2m}}}{n^{1-\frac{u_{\max}}{\pi}}} \bigg),
\end{align*}
where $\widehat{E}_{n}$ denotes the right-hand side of \eqref{exp in thm} without the error term.
\end{theorem}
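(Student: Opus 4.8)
The plan is to obtain Theorem~\ref{thm:MGF} as a direct specialization of Theorem~\ref{theorem U}, using the exact identity \eqref{MGF appli} and Lemma~\ref{lemma:some id for alpha and beta}. First I would set
\[
\beta_k = \frac{u_k}{2\pi i}, \qquad k=0,1,\dots,m,
\]
with $u_0:=-u_1-\dots-u_m$. Then $\re\beta_k = \frac{\im u_k}{2\pi}$, so the standing hypotheses give $\re\beta_k\in(-\tfrac12,\tfrac12)$ for every $k=0,\dots,m$; the constraint $\beta_0=-\beta_1-\dots-\beta_m$ required for \eqref{MGF appli} holds automatically; $\sum_{k=0}^m\beta_k=0$; and $2\beta_{\max}=\frac{u_{\max}}{\pi}$. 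With this choice $e^{2\pi i\beta_k N_n(\theta_k)}=e^{u_kN_n(\theta_k)}$ and, since $N_n(0)=0$ almost surely, the expectation in \eqref{MGF appli} equals the left-hand side of \eqref{exp in thm}, so \eqref{MGF appli} becomes
\[
\mathbb{E}\bigg[\prod_{j=1}^{n}e^{W(e^{i\phi_{j}})}\prod_{k=0}^{m}|\mathsf{p}_{n}(t_{k})|^{\alpha_{k}}\prod_{k=1}^{m}e^{u_{k}N_{n}(\theta_{k})}\bigg] = \prod_{k=1}^{m} e^{\frac{u_k\theta_k}{2\pi}\,n}\;\frac{D_{n}(\vec{\alpha},\vec{\beta},V,W)}{D_{n}(\vec{0},\vec{0},V,0)}.
\]

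Next I would apply Theorem~\ref{theorem U} to both determinants on the right. For the denominator $D_n(\vec 0,\vec 0,V,0)$ one has $C_2=C_3=C_4=0$, hence $\log D_n(\vec 0,\vec 0,V,0)=C_1n^2+\bigO(n^{-1})$, while for the numerator $\log D_n(\vec\alpha,\vec\beta,V,W)=C_1n^2+C_2n+C_3\log n+C_4+\bigO(n^{-1+2\beta_{\max}})$ with the \emph{same} $C_1$ (which does not depend on $\vec\alpha,\vec\beta,W$). Taking logarithms, the $n^2$ terms cancel and the $\bigO(n^{-1})$ from the denominator is absorbed (since $u_{\max}\ge0$), leaving
\[
\log\mathbb{E}\bigg[\cdots\bigg] = \Big(\textstyle\sum_{k=1}^m\frac{u_k\theta_k}{2\pi}+C_2\Big)n + C_3\log n + C_4 + \bigO\big(n^{-1+u_{\max}/\pi}\big).
\]
It then remains to verify the algebraic identities $\tilde C_1=\sum_{k=1}^m\frac{u_k\theta_k}{2\pi}+C_2$, $\tilde C_2=C_3$, $\tilde C_3=C_4$ after substituting $\beta_k=\frac{u_k}{2\pi i}$. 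This is bookkeeping: $-\beta_k^2=\frac{u_k^2}{4\pi^2}$ yields $\tilde C_2$; in $\tilde C_3$ one uses $i\beta_k=\frac{u_k}{2\pi}$, $2\beta_j\beta_k=-\frac{u_ju_k}{2\pi^2}$, $\beta_k^2=-\frac{u_k^2}{4\pi^2}$, $W(t_k)-W_0=W_+(t_k)+W_-(t_k)$, and the direct substitution in the Barnes-$G$ factors; and for $\tilde C_1$ one rewrites $\frac{\alpha_k}{2}(V(t_k)-V_0)$ via \eqref{id1 lemma} as $\alpha_k\int_0^{2\pi}\log|e^{i\phi}-t_k|\,d\mu_V$, and uses \eqref{id2 lemma} to write $\int_0^{\theta_k}d\mu_V=\frac{\theta_k}{2\pi}-\frac{\im V_+(t_k)}{\pi}+\frac{\im V_+(1)}{\pi}$, so that $\sum_{k=1}^m u_k\int_0^{\theta_k}d\mu_V=\sum_{k=0}^m u_k\big(\frac{\theta_k}{2\pi}-\frac{\im V_+(t_k)}{\pi}\big)=\sum_{k=1}^m\frac{u_k\theta_k}{2\pi}-\sum_{k=0}^m 2i\beta_k\im V_+(t_k)$, where the first equality uses $\sum_{k=0}^m u_k=0$ (removing the $\im V_+(1)$ term) and the second uses $\theta_0=0$.

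Finally, the uniformity claim is inherited verbatim from the uniformity in Theorem~\ref{theorem U}, and the differentiated asymptotics follow by the chain rule. Since $\beta_\ell=\frac{u_\ell}{2\pi i}$ for $\ell=1,\dots,m$ while $\beta_0=-\sum_{j=1}^m\beta_j$ depends on every $u_\ell$, one has $\partial_{u_\ell}=\frac{1}{2\pi i}\big(\partial_{\beta_\ell}-\partial_{\beta_0}\big)$ acting on the right-hand side above; expanding $\partial^{\vec k}$ into a finite linear combination of operators $\partial_{\alpha_0}^{k_0}\cdots\partial_{\alpha_m}^{k_m}\partial_{\beta_0}^{j_0}\cdots\partial_{\beta_m}^{j_m}$ with $j_0+\dots+j_m=k_{m+1}+\dots+k_{2m}$ and invoking \eqref{der of error in thm} term by term gives the claimed bound $\bigO\big((\log n)^{k_{m+1}+\dots+k_{2m}}/n^{1-u_{\max}/\pi}\big)$; note that the prefactor $\prod_k e^{\frac{u_k\theta_k}{2\pi}n}$ and the $(\vec\alpha,\vec u)$-independent denominator are reproduced exactly in $\widehat E_n$ and so contribute nothing to the error. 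I do not anticipate a serious obstacle: the argument is a change of variables in Theorem~\ref{theorem U}, and the only points demanding care are the algebraic verification of $\tilde C_1,\tilde C_2,\tilde C_3$ — in particular the cancellation of the $\im V_+(1)$ term via $\sum_k u_k=0$ — and correctly carrying the extra $-\partial_{\beta_0}$ in the chain rule, which is what makes the differentiated error depend only on $k_{m+1}+\dots+k_{2m}$.
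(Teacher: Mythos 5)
Your proposal is correct and coincides with the paper's own proof: the paper obtains Theorem \ref{thm:MGF} exactly by combining the identity \eqref{MGF appli} with the substitution $\beta_k=\frac{u_k}{2\pi i}$ (so $\beta_0=-\beta_1-\dots-\beta_m$, $2\beta_{\max}=\frac{u_{\max}}{\pi}$), Theorem \ref{theorem U} applied to both $D_n(\vec\alpha,\vec\beta,V,W)$ and $D_n(\vec 0,\vec 0,V,0)$, and Lemma \ref{lemma:some id for alpha and beta}; your bookkeeping of $\tilde C_1,\tilde C_2,\tilde C_3$ and of the differentiated error via $\partial_{u_\ell}=\frac{1}{2\pi i}(\partial_{\beta_\ell}-\partial_{\beta_0})$ simply makes explicit what the paper leaves implicit. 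The one small caveat is your claim that the hypotheses give $\re\beta_0\in(-\frac{1}{2},\frac{1}{2})$, which actually requires $|\im(u_1+\dots+u_m)|<\pi$; this is an assumption the paper itself uses implicitly when invoking Theorem \ref{theorem U}, so it is not a deviation from the paper's argument.
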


Our first corollary is concerned with the smooth linear statistics of \eqref{point process}. For $V=0$, the central limit theorem stated in Corollary \ref{coro:smooth} was already obtained in \cite{Jo1988}.

\begin{corollary}\label{coro:smooth}(Smooth statistics.)
Let $V$ and $W$ be as in Theorem \ref{thm:MGF}, and assume furthermore that $W:\mathbb{T}\to \mathbb{R}$. Let $\{\kappa_{j}\}_{j=1}^{+\infty}$ be the cumulants of $\sum_{j=1}^{n}W(e^{i\phi_{j}})$, i.e.
\begin{align}\label{cum of W}
\kappa_{j} := \partial_{t}^{j} \log \mathbb{E}[e^{t \sum_{j=1}^{n}W(e^{i\phi_{j}})}]\big|_{t=0}.
\end{align}
As $n \to + \infty$, we have
\begin{align*}
& \mathbb{E}\bigg[\sum_{j=1}^{n}W(e^{i\phi_{j}})\bigg] = n \int_0^{2\pi} W(e^{i\phi})d\mu_V(e^{i\phi}) + \bigO \bigg( \frac{1}{n} \bigg), \\
& \mathrm{Var}\bigg[\sum_{j=1}^{n}W(e^{i\phi_{j}})\bigg] = 2\sum_{\ell = 1}^{+\infty} \ell W_{\ell}W_{-\ell} + \bigO \bigg( \frac{1}{n} \bigg), \\
& \kappa_{j} = \bigO \bigg( \frac{1}{n} \bigg), \qquad j \geq 3.
\end{align*}
Moreover, if $W$ is non-constant, then 
$$\frac{\sum_{j=1}^{n}W(e^{i\phi_{j}})-n\int_0^{2\pi} W(e^{i\phi})d\mu_V(e^{i\phi})}{(2\sum_{k = 1}^{+\infty} kW_{k}W_{-k})^{1/2}}$$
converges in distribution to a standard normal random variable.
\end{corollary}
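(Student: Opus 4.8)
The plan is to read everything off from Theorem~\ref{thm:MGF} in the degenerate case $m=0$, $\alpha_{0}=0$, with $W$ replaced by $tW$ for a (real or complex) parameter $t$; since $tW$ is again analytic in a neighborhood of $\mathbb{T}$, the theorem applies. When $m=0$ there are no Fisher--Hartwig singularities and the convention $u_{0}=-u_{1}-\dots-u_{m}$ forces $u_{0}=0$, so $\tilde{C}_{2}=0$, the Barnes-function term equals $\log G(1)=0$, the double sum over $0\le j<k\le m$ is empty, and every term of $\tilde{C}_{3}$ carrying a factor $\alpha_{k}$ or $u_{k}$ drops out. Writing $S_{n}:=\sum_{j=1}^{n}W(e^{i\phi_{j}})$ and $\mu:=\int_{0}^{2\pi}W(e^{i\phi})\,d\mu_{V}(e^{i\phi})$, Theorem~\ref{thm:MGF} then yields, as $n\to\infty$,
\begin{equation}\label{prop:mgf}
\mathbb{E}\big[e^{tS_{n}}\big]=\exp\left(t n\,\mu + t^{2}\sum_{\ell=1}^{\infty}\ell\,W_{\ell}W_{-\ell} + \bigO(n^{-1})\right).
\end{equation}
Because $W$ is real-valued on $\mathbb{T}$ one has $W_{-\ell}=\overline{W_{\ell}}$, hence $\sum_{\ell\ge1}\ell W_{\ell}W_{-\ell}=\sum_{\ell\ge1}\ell|W_{\ell}|^{2}\ge0$, and this is strictly positive exactly when $W$ is non-constant.

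For the central limit theorem I would set $\sigma^{2}:=2\sum_{\ell\ge1}\ell W_{\ell}W_{-\ell}>0$, fix $\xi\in\mathbb{R}$, and apply \eqref{prop:mgf} with the complex parameter $t=i\xi/\sigma$; this uses only the \emph{pointwise} asymptotics of Theorem~\ref{thm:MGF}. The linear term $tn\mu$ cancels exactly against the centering and $t^{2}\sum_{\ell}\ell W_{\ell}W_{-\ell}=-\xi^{2}/2$, so
\begin{equation}\label{prop:cf}
\mathbb{E}\left[\exp\!\left(i\xi\,\frac{S_{n}-n\mu}{\sigma}\right)\right]=\exp\!\left(-\frac{\xi^{2}}{2}+\bigO(n^{-1})\right)\xrightarrow[n\to\infty]{}e^{-\xi^{2}/2}.
\end{equation}
As this holds for every $\xi\in\mathbb{R}$ and $e^{-\xi^{2}/2}$ is the characteristic function of the standard Gaussian, L\'evy's continuity theorem gives the claimed convergence in distribution of $(S_{n}-n\mu)/\sigma$ to a standard normal.

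The three preceding displayed estimates in the corollary come out en route: the cumulants satisfy $\kappa_{j}=\partial_{t}^{j}\log\mathbb{E}[e^{tS_{n}}]\big|_{t=0}$, so expanding \eqref{prop:mgf} --- now used for $t$ in a complex disc around $0$, whence Cauchy's estimates carry the $\bigO(n^{-1})$ bound to every $t$-derivative --- gives $\kappa_{1}=n\mu+\bigO(n^{-1})$, $\kappa_{2}=2\sum_{\ell\ge1}\ell W_{\ell}W_{-\ell}+\bigO(n^{-1})$, and $\kappa_{j}=\bigO(n^{-1})$ for $j\ge3$; the CLT could alternatively be deduced from these via the classical cumulant criterion. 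The one place that needs care is the uniformity of the error term in \eqref{prop:mgf} under $W\mapsto tW$ for $t$ ranging over a complex disc, i.e.\ that the asymptotics of Theorem~\ref{theorem U} hold not just for each fixed analytic $W$ but locally uniformly in $W$; this is the main (and essentially only) obstacle, and it is supplied by the Riemann--Hilbert proof of Theorem~\ref{theorem U}, whose implied constants depend continuously on $W$ through sup-norms of its analytic continuation to a fixed annulus. For the central limit statement itself this refinement is not even needed, since only the pointwise limit \eqref{prop:cf} is used.
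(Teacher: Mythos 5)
Your proposal is correct and follows essentially the same route as the paper: specialize Theorem \ref{thm:MGF} to $m=0$, $\alpha_{0}=0$ with $W$ replaced by $tW$ (resp.\ by $tW/\sigma$) to read off the cumulant asymptotics and the Gaussian limit, with the only delicate point being exactly the one you flag, namely that the error term may be differentiated in $t$ (the paper supplies this via Remark \ref{remark:diff} and the remark following it, rather than via Cauchy estimates on a complex disc). The only cosmetic differences are that the paper works with the real moment generating function and cites Billingsley instead of using characteristic functions and L\'evy's continuity theorem; both are equally valid since Theorem \ref{thm:MGF} permits complex-valued $W$.
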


Our second corollary considers
linear statistics for a test function with a $\log$-singularity at $t$. We let $\gamma_{\mathrm{E}}\approx 0.5772$ denote Euler's constant.

\begin{corollary}\label{coro:log}($\log |\cdot|$-statistics.)
Let $t=e^{i\theta} \in \mathbb{T}$ with $\theta\in [0,2\pi)$, and let $\{\kappa_{j}\}_{j=1}^{+\infty}$ be the cumulants of $\log |\mathsf{p}_{n}(t)|$, i.e.
\begin{align}\label{cum of log}
\kappa_{j} := \partial_{\alpha}^{j} \log \mathbb{E}[e^{\alpha \log |\mathsf{p}_{n}(t)|}]\big|_{\alpha=0}.
\end{align}
As $n \to + \infty$, we have
\begin{align*}
& \mathbb{E}[\log |\mathsf{p}_{n}(t)|] = n \int_{0}^{2\pi} \log|e^{i\phi}-t|d\mu_V(e^{i\phi}) + \bigO \bigg( \frac{1}{n} \bigg), \\
& \mathrm{Var}[\log |\mathsf{p}_{n}(t)|] = \frac{\log n}{2} + \frac{1+\gamma_{\mathrm{E}}}{2} - \frac{\frac{1}{2\pi}-\psi(t)}{2\psi(t)} + \bigO \bigg( \frac{1}{n} \bigg), \\
& \kappa_{j} = (-1+2^{1-j}) \; (\log G)^{(j)}(1) + \bigO \bigg( \frac{1}{n} \bigg), \qquad j \geq 3,
\end{align*}
and 
$$\frac{\log |\mathsf{p}_{n}(t)|-n\int_{0}^{2\pi} \log|e^{i\phi}-t|d\mu_V(e^{i\phi})}{\sqrt{\log n}/\sqrt{2}}$$ 
converges in distribution to a standard normal random variable.
\end{corollary}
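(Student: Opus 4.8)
The plan is to deduce all four assertions from Theorem~\ref{thm:MGF}, specialized to a single root‑type singularity placed at $t$. First I would choose the parameters in Theorem~\ref{thm:MGF} as follows: set $W\equiv 0$ and all $u_{k}=0$ (so $u_{0}=0$ and $u_{\max}=0$), and put $t$ among the $\{t_{k}\}$ by taking $m=0$, $\alpha_{0}=\alpha$ when $t=1$, and $m=1$, $\theta_{1}=\theta$, $\alpha_{0}=0$, $\alpha_{1}=\alpha$ when $t=e^{i\theta}$ with $\theta\in(0,2\pi)$. In both cases the left‑hand side of \eqref{exp in thm} equals $\mathbb{E}[|\mathsf{p}_{n}(t)|^{\alpha}]=\mathbb{E}[e^{\alpha\log|\mathsf{p}_{n}(t)|}]$, while on the right‑hand side all cross terms, all $W$‑terms, and the entire contribution of the auxiliary point $t_{0}=1$ (when $t\neq1$) vanish, because the relevant parameters are zero and $G(1)=1$. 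Hence $\tilde C_{1}=\alpha\int_{0}^{2\pi}\log|e^{i\phi}-t|\,d\mu_{V}(e^{i\phi})$, $\tilde C_{2}=\alpha^{2}/4$, and $\tilde C_{3}=h(\alpha)-\frac{\alpha^{2}}{4\psi(t)}\big(\frac{1}{2\pi}-\psi(t)\big)$ with $h(\alpha):=\log\frac{G(1+\alpha/2)^{2}}{G(1+\alpha)}$, so that, uniformly for $\alpha$ in compact subsets of $\{\re z>-1\}$ and differentiably in $\alpha$ (the differentiated error being still $\bigO(1/n)$ since $u_{\max}=0$),
\begin{align}\label{eq:logEp}
\log\mathbb{E}\big[e^{\alpha\log|\mathsf{p}_{n}(t)|}\big]
&= \alpha n\int_{0}^{2\pi}\log|e^{i\phi}-t|\,d\mu_{V}(e^{i\phi}) + \frac{\alpha^{2}}{4}\log n + h(\alpha) \nonumber \\
&\quad - \frac{\alpha^{2}}{4\psi(t)}\Big(\frac{1}{2\pi}-\psi(t)\Big) + \bigO\Big(\frac{1}{n}\Big).
\end{align}

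Next I would read off the cumulants $\kappa_{j}=\partial_{\alpha}^{j}\log\mathbb{E}[e^{\alpha\log|\mathsf{p}_{n}(t)|}]\big|_{\alpha=0}$ by differentiating \eqref{eq:logEp} at $\alpha=0$ (legitimate by the differentiability statement of Theorem~\ref{thm:MGF}, with remainder $\bigO(1/n)$). A short computation gives $h(0)=0$, $h'(0)=0$, $h''(0)=-\tfrac12(\log G)''(1)$, and $h^{(j)}(0)=(2^{1-j}-1)(\log G)^{(j)}(1)$ for $j\geq 2$. Using the classical value $(\log G)''(1)=-(1+\gamma_{\mathrm{E}})$ (equivalently, the quadratic Taylor coefficient of $\log G(1+z)$ at $z=0$ equals $-\tfrac{1+\gamma_{\mathrm{E}}}{2}$) one obtains $\kappa_{1}=\mathbb{E}[\log|\mathsf{p}_{n}(t)|]=n\int_{0}^{2\pi}\log|e^{i\phi}-t|\,d\mu_{V}(e^{i\phi})+\bigO(1/n)$; then $\kappa_{2}=\mathrm{Var}[\log|\mathsf{p}_{n}(t)|]=\tfrac{\log n}{2}+h''(0)-\frac{\frac{1}{2\pi}-\psi(t)}{2\psi(t)}+\bigO(1/n)=\tfrac{\log n}{2}+\tfrac{1+\gamma_{\mathrm{E}}}{2}-\frac{\frac{1}{2\pi}-\psi(t)}{2\psi(t)}+\bigO(1/n)$; and for $j\geq 3$ the $\log n$‑, linear‑, and $\psi(t)$‑terms no longer contribute, so $\kappa_{j}=(-1+2^{1-j})(\log G)^{(j)}(1)+\bigO(1/n)$. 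This establishes the first three displays of Corollary~\ref{coro:log}.

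Finally, for the central limit theorem I would invoke Curtiss' theorem: it suffices to prove that the moment generating function of $Y_{n}:=(\log|\mathsf{p}_{n}(t)|-\mu_{n})/\sigma_{n}$, with $\mu_{n}:=n\int_{0}^{2\pi}\log|e^{i\phi}-t|\,d\mu_{V}(e^{i\phi})$ and $\sigma_{n}:=\sqrt{(\log n)/2}$, converges to $e^{s^{2}/2}$ on a neighborhood of $s=0$. For fixed $s$, the value $\alpha=s/\sigma_{n}$ tends to $0$, hence eventually lies in any fixed compact subset of $\{\re z>-1\}$, so \eqref{eq:logEp} applies with a uniform $\bigO(1/n)$; the $\mu_{n}$‑terms cancel, $\frac{\alpha^{2}}{4}\log n=\frac{s^{2}}{4}\cdot\frac{\log n}{(\log n)/2}=\frac{s^{2}}{2}$, and $h(\alpha)=\bigO(\alpha^{2})=\bigO(1/\log n)$ together with the remaining $\bigO(\alpha^{2})$ and $\bigO(1/n)$ terms tend to $0$, so that $\log\mathbb{E}[e^{sY_{n}}]\to\frac{s^{2}}{2}$; convergence in distribution to a standard normal follows. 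The whole argument is essentially bookkeeping once Theorem~\ref{thm:MGF} is available; the only mildly delicate points are the case split $t=1$ versus $t\neq 1$ needed to place $t$ among the singularities, the evaluation of $(\log G)''(1)$, and checking that $\alpha=s/\sigma_{n}$ stays within the region where the uniform and differentiated estimates of Theorem~\ref{thm:MGF} are valid.
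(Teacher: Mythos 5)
Your proposal is correct and follows essentially the same route as the paper: the paper's proof also specializes Theorem \ref{thm:MGF} with $W=0$ ($m=0$, $\alpha_0=\alpha$ if $t=1$; $m=1$, $\alpha_0=0$, $u_1=0$, $\alpha_1=\alpha$ if $t\neq 1$), reads the cumulants off via the differentiability statement, and proves the CLT by replacing $\alpha$ with $\alpha\sqrt{2}/\sqrt{\log n}$ and appealing to convergence of moment generating functions, exactly as you do.
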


Counting statistics of determinantal point processes have been widely studied over the years \cite{CostinLebowitz, SoshnikovSineAiryBessel} and is still a subject of active research, see e.g. the recent works \cite{SDMS2020, DXZ2022, CharlierAdv}. Our third corollary established various results on the counting statistics of \eqref{point process}.

\begin{corollary}\label{coro:counting}(Counting statistics.)
Let $t=e^{i\theta} \in \mathbb{T}$ be bounded away from $t_{0}:=1$, with $\theta\in (0,2\pi)$, and let $\{\kappa_{j}\}_{j=1}^{+\infty}$ be the cumulants of $N_{n}(\theta)$, i.e.
\begin{align}\label{cum of count}
\kappa_{j} := \partial_{u}^{j} \log \mathbb{E}[e^{u N_{n}(\theta)}]\big|_{u=0}.
\end{align}
As $n \to + \infty$, we have
\begin{align*}
& \mathbb{E}[N_{n}(\theta)] = n \int_{0}^{\theta} d\mu_V(e^{i\phi}) + \bigO \bigg( \frac{\log n}{n} \bigg), \\
& \mathrm{Var}[N_{n}(\theta)] = \frac{\log n}{\pi^{2}} + \frac{1+\gamma_{\mathrm{E}}+\log|t-1|}{\pi^{2}} - \frac{\frac{1}{2\pi}-\psi(1)}{2\pi^{2}\psi(1)} - \frac{\frac{1}{2\pi}-\psi(t)}{2\pi^{2}\psi(t)} + \bigO \bigg( \frac{(\log n)^{2}}{n} \bigg), \\
& \kappa_{2j+1} = \bigO \bigg( \frac{(\log n)^{2j+1}}{n} \bigg), \qquad j \geq 1, \\
& \kappa_{2j+2} = \frac{(-1)^{j+1}}{2^{2j}\pi^{2j+2}} \; (\log G)^{(2j+2)}(1) + \bigO \bigg( \frac{(\log n)^{2j+2}}{n} \bigg), \qquad j \geq 1,
\end{align*}
and $\frac{N_{n}(\theta)-n\int_{0}^{\theta} d\mu_V(e^{i\phi})}{\sqrt{\log n}/\pi}$ converges in distribution to a standard normal random variable.
\end{corollary}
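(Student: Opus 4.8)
The plan is to read off every assertion from Theorem~\ref{thm:MGF} by a suitable specialization of its parameters. Take $m=1$, $W\equiv 0$, $\alpha_{0}=\alpha_{1}=0$, $t_{1}=t$, and a single free variable $u:=u_{1}$, so that $u_{0}=-u_{1}=-u$ and $t_{0}=1$. With these choices the left-hand side of \eqref{exp in thm} is precisely the moment generating function $\mathbb{E}[e^{uN_{n}(\theta)}]$, and the hypothesis that $t$ be bounded away from $t_{0}=1$ guarantees that $\theta_{1}=\theta$ lies in a compact subset of $(0,2\pi)$, which is what makes the uniformity in Theorem~\ref{thm:MGF} available. First I would compute $\widetilde{C}_{1},\widetilde{C}_{2},\widetilde{C}_{3}$ for this specialization. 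A direct substitution gives $\widetilde{C}_{1}=u\int_{0}^{\theta}d\mu_{V}(e^{i\phi})$ and $\widetilde{C}_{2}=\frac{u^{2}}{2\pi^{2}}$ (the $k=0$ and $k=1$ terms contributing equally since $u_{0}^{2}=u_{1}^{2}=u^{2}$), while combining the $k=0$ and $k=1$ terms in $\widetilde{C}_{3}$ and using $u_{0}=-u$, $t_{0}=1$ yields
\[
\widetilde{C}_{3}=\frac{u^{2}}{2\pi^{2}}\log|t-1|+2\log\bigl[G(1+\tfrac{u}{2\pi i})G(1-\tfrac{u}{2\pi i})\bigr]-\frac{u^{2}}{4\pi^{2}}\left(\frac{\frac{1}{2\pi}-\psi(1)}{\psi(1)}+\frac{\frac{1}{2\pi}-\psi(t)}{\psi(t)}\right).
\]
Hence $\log\mathbb{E}[e^{uN_{n}(\theta)}]=\widetilde{C}_{1}n+\widetilde{C}_{2}\log n+\widetilde{C}_{3}+\bigO(n^{-1+|\im u|/\pi})$, and by the differentiability part of Theorem~\ref{thm:MGF} this expansion may be differentiated in $u$ any number of times, the $j$-th derivative carrying error $\bigO((\log n)^{j}/n^{1-u_{\max}/\pi})$, which at $u=0$ (where $u_{\max}=0$) becomes $\bigO((\log n)^{j}/n)$.

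Next I would extract the cumulants via $\kappa_{j}=\partial_{u}^{j}\log\mathbb{E}[e^{uN_{n}(\theta)}]\big|_{u=0}$. The key structural observation is a parity one: $\widetilde{C}_{1}$ is odd in $u$, whereas $\widetilde{C}_{2}$ and $\widetilde{C}_{3}$ are even (the factor $G(1+\tfrac{u}{2\pi i})G(1-\tfrac{u}{2\pi i})$ being invariant under $u\mapsto -u$). Thus $\kappa_{1}=\partial_{u}(\widetilde{C}_{1}n)\big|_{u=0}+\bigO(\tfrac{\log n}{n})=n\int_{0}^{\theta}d\mu_{V}+\bigO(\tfrac{\log n}{n})$; for $j\geq 1$ the odd cumulant $\kappa_{2j+1}$ receives no contribution from $\widetilde{C}_{1}n$ (linear in $u$), $\widetilde{C}_{2}\log n$ (quadratic), or $\widetilde{C}_{3}$ (even), whence $\kappa_{2j+1}=\bigO((\log n)^{2j+1}/n)$. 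For the even cumulants I would use the Taylor coefficients of Barnes' $G$ at $1$, in particular $(\log G)''(1)=-(1+\gamma_{\mathrm{E}})$ and, more generally, $\partial_{u}^{2\ell}\bigl[2\log\bigl(G(1+\tfrac{u}{2\pi i})G(1-\tfrac{u}{2\pi i})\bigr)\bigr]\big|_{u=0}=\tfrac{4}{(2\pi i)^{2\ell}}(\log G)^{(2\ell)}(1)$. The case $\ell=1$ contributes $\tfrac{1+\gamma_{\mathrm{E}}}{\pi^{2}}$, and assembling it with the $\partial_{u}^{2}$ derivatives of $\widetilde{C}_{2}\log n$, of $\tfrac{u^{2}}{2\pi^{2}}\log|t-1|$, and of the $\psi$-terms yields the stated formula for $\mathrm{Var}[N_{n}(\theta)]=\kappa_{2}$. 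For $\ell=j+1\geq 2$ only the $G$-term survives (the remaining pieces of $\widetilde{C}_{3}$ and $\widetilde{C}_{2}\log n$ being at most quadratic in $u$), and using $(2\pi i)^{2j+2}=(-1)^{j+1}(2\pi)^{2j+2}$ together with $(2\pi)^{2j+2}=4\cdot 2^{2j}\pi^{2j+2}$ gives $\kappa_{2j+2}=\tfrac{(-1)^{j+1}}{2^{2j}\pi^{2j+2}}(\log G)^{(2j+2)}(1)+\bigO((\log n)^{2j+2}/n)$.

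Finally, the central limit theorem would follow by the method of cumulants applied to $X_{n}:=\bigl(N_{n}(\theta)-n\int_{0}^{\theta}d\mu_{V}\bigr)\big/\bigl(\sqrt{\log n}/\pi\bigr)$. Its first cumulant is $\bigO(\sqrt{\log n}/n)\to 0$, its second cumulant is $\mathrm{Var}[N_{n}(\theta)]/(\log n/\pi^{2})=1+o(1)\to 1$, and for each fixed $j\geq 3$ its $j$-th cumulant equals $\kappa_{j}/(\sqrt{\log n}/\pi)^{j}=\bigO((\log n)^{-j/2})\to 0$ since $\kappa_{j}=\bigO(1)$. As the standard normal law is determined by its moments, convergence of all cumulants of $X_{n}$ to those of a standard normal forces $X_{n}$ to converge in distribution to a standard normal.

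The argument is an essentially routine computation once the specialization is fixed; I expect the only genuinely delicate bookkeeping to be the correct treatment of the $k=0$ term in Theorem~\ref{thm:MGF} (with $u_{0}=-u$ and $t_{0}=1$) and the verification that, after differentiating $j$ times and setting $u=0$, the error $(\log n)^{j}/n^{1-u_{\max}/\pi}$ collapses to $(\log n)^{j}/n$ — neither of which is conceptually hard.
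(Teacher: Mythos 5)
Your proposal is correct, and for the cumulant asymptotics it is the same route the paper takes: specialize Theorem \ref{thm:MGF} to $W=0$, $m=1$, $\alpha_{0}=\alpha_{1}=0$, $u_{1}=u$ (so $u_{0}=-u$), use the differentiability of the expansion in $u$, and evaluate at $u=0$ where $u_{\max}=0$; your explicit values of $\tilde{C}_{1},\tilde{C}_{2},\tilde{C}_{3}$, the parity argument killing the odd cumulants, and the Barnes $G$ bookkeeping (including $(\log G)''(1)=-(1+\gamma_{\mathrm{E}})$ and the sign/power manipulation giving $\frac{(-1)^{j+1}}{2^{2j}\pi^{2j+2}}$) all check out against the constants in Theorem \ref{thm:MGF}. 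Where you genuinely diverge is the central limit theorem: the paper proves it by re-applying Theorem \ref{thm:MGF} with the $n$-dependent substitution $u_{1}=\pi u/\sqrt{\log n}$ ($u\in\mathbb{R}$ fixed), which makes $\tilde{C}_{2}\log n = u^{2}/2$ and $\tilde{C}_{3}=o(1)$, so the moment generating function of the normalized counting statistic converges pointwise to $e^{u^{2}/2}$, and convergence in distribution follows from the standard MGF convergence theorem (the citation to Billingsley); you instead deduce the CLT by the method of cumulants, showing the first cumulant of the normalized variable is $\bigO(\sqrt{\log n}/n)$, the second tends to $1$, and all higher cumulants vanish in the limit, then invoking moment determinacy of the Gaussian. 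Both arguments are valid; the paper's route leans on the stated uniformity of the expansion in $u_{1}$ over compact sets (so that an $n$-dependent parameter tending to $0$ is admissible) and avoids any appeal to moment determinacy, while yours reuses only the fixed-order cumulant asymptotics you have already derived, at the cost of routing through the Fr\'echet--Shohat/method-of-moments step.
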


\begin{remark}
There are several differences between smooth, $\log$- and counting statistics that are worth pointing out:
\begin{itemize}
\item \vspace{-0.15cm} The variance of the smooth statistics is of order $1$, while the variances of the $\log$- and counting statistics are of order $\log n$.
\item \vspace{-0.15cm} The third and higher order cumulants of the smooth statistics are all $\bigO(n^{-1})$, while for the $\log$-statistics the corresponding cumulants are all of order $1$. On the other hand, the third and higher order cumulants of the counting statistics are as follows: the odd cumulants are $o(1)$, while the even cumulants are of order $1$. This phenomena for the counting statistics was already noticed in \cite[eq (29)]{SDMS2020} for a class of determinantal point processes.
\end{itemize}
\end{remark}

Another consequence of Theorem \ref{thm:MGF} is the following result about the individual fluctuations of the ordered angles. Corollary \ref{coro:order} is an analogue for \eqref{point process} of Gustavsson's well-known result \cite[Theorem 1.2]{Gustavsson} for the Gaussian unitary ensemble.

\begin{corollary}\label{coro:order}(Ordered statistics.)
Let $\xi_{1}\leq \xi_{2} \leq \dots \leq \xi_{n}$ denote the ordered angles, \vspace{0.1cm}
\begin{align}\label{xijdef}
\xi_{1}=\min\{\phi_{1},\dots,\phi_{n}\}, \quad \xi_{j} = \inf_{\theta\in [0,2\pi)}\{\theta:N_{n}(\theta)=j\}, \quad j=1,\dots,n,
\end{align} 
and let $\eta_{k}$ be the classical location of the $k$-th smallest angle $\xi_{k}$, 
\begin{align}\label{def of kappa k}
\int_{0}^{\eta_{k}}d\mu_V(e^{i\phi}) = \frac{k}{n}, \qquad k=1,\dots,n.
\end{align}
Let $t=e^{i\theta} \in \mathbb{T}$ with $\theta\in (0,2\pi)$. Let $k_{\theta}=[n \int_{0}^{\theta}d\mu_V(e^{i\phi})]$, where $[x]:= \lfloor x + \frac{1}{2}\rfloor$ is the closest integer to $x$. As $n \to + \infty$, $\frac{n\psi(e^{i\eta_{k_{\theta}}})}{\sqrt{\log n}/\pi}(\xi_{k_{\theta}}-\eta_{k_{\theta}})$ converges in distribution to a standard normal random variable.
\end{corollary}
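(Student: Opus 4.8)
The plan is to follow Gustavsson's strategy of converting the fluctuations of an individual ordered angle into a central limit theorem for the counting function $N_{n}$. The starting point is the almost-sure identity
\begin{equation*}
\{\xi_{k_{\theta}} \leq s\} = \{N_{n}(s) \geq k_{\theta}\}, \qquad s \in [0,2\pi),
\end{equation*}
the only exceptional null event being that some $\phi_{j}$ equals $s$. Fix $x \in \mathbb{R}$ and set
\begin{equation*}
s_{n} := \eta_{k_{\theta}} + \frac{x\sqrt{\log n}}{\pi n \,\psi(e^{i\eta_{k_{\theta}}})},
\end{equation*}
so that $\big\{ \tfrac{n\psi(e^{i\eta_{k_{\theta}}})}{\sqrt{\log n}/\pi}(\xi_{k_{\theta}}-\eta_{k_{\theta}}) \leq x \big\} = \{\xi_{k_{\theta}}\leq s_{n}\} = \{N_{n}(s_{n})\geq k_{\theta}\}$. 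Since $\psi$ is continuous and strictly positive on $\mathbb{T}$, the map $\vartheta \mapsto \int_{0}^{\vartheta} d\mu_{V}$ is an increasing homeomorphism of $[0,2\pi)$ onto $[0,1)$ with $\int_{0}^{\theta} d\mu_{V} \in (0,1)$; hence for all large $n$ we have $k_{\theta} \in \{1,\dots,n-1\}$, $\eta_{k_{\theta}}$ is well-defined with $\eta_{k_{\theta}}\to\theta$, and $s_{n} = \theta + \bigO(\sqrt{\log n}/n) \to \theta$. In particular, for $n$ large the points $\eta_{k_{\theta}}$ and $s_{n}$ all lie in one fixed compact subset of $(0,2\pi)$ that is bounded away from $t_{0} = 1$.

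It thus remains to show $\mathbb{P}(N_{n}(s_{n})\geq k_{\theta}) \to \Phi(x)$, where $\Phi$ is the standard normal distribution function. I would first handle the deterministic centering. By the very definition of $\eta_{k_{\theta}}$ we have $k_{\theta} = n\int_{0}^{\eta_{k_{\theta}}} d\mu_{V}$ exactly, while Corollary~\ref{coro:counting}, applied at the point $s_{n}$, gives $\mathbb{E}[N_{n}(s_{n})] = n\int_{0}^{s_{n}} d\mu_{V} + \bigO(\log n/n)$ and $\mathrm{Var}[N_{n}(s_{n})] = \tfrac{\log n}{\pi^{2}}(1+o(1))$. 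Therefore, using $s_{n} - \eta_{k_{\theta}} = \bigO(\sqrt{\log n}/n)$ to linearize the integral of $\psi$,
\begin{equation*}
k_{\theta} - \mathbb{E}[N_{n}(s_{n})] = -n\int_{\eta_{k_{\theta}}}^{s_{n}} \psi(e^{i\phi})\,d\phi + \bigO\!\Big(\tfrac{\log n}{n}\Big) = -n\,\psi(e^{i\eta_{k_{\theta}}})(s_{n} - \eta_{k_{\theta}}) + \bigO\!\Big(\tfrac{\log n}{n}\Big) = -\frac{x\sqrt{\log n}}{\pi} + \bigO\!\Big(\tfrac{\log n}{n}\Big).
\end{equation*}
Dividing by $\sqrt{\mathrm{Var}[N_{n}(s_{n})]} = \tfrac{\sqrt{\log n}}{\pi}(1+o(1))$ shows that the threshold $(k_{\theta} - \mathbb{E}[N_{n}(s_{n})])/\sqrt{\mathrm{Var}[N_{n}(s_{n})]}$ converges to $-x$.

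Second, I would prove that $(N_{n}(s_{n}) - \mathbb{E}[N_{n}(s_{n})])/\sqrt{\mathrm{Var}[N_{n}(s_{n})]}$ converges in distribution to a standard normal. This is the central limit theorem underlying Corollary~\ref{coro:counting}, but with the $n$-dependent evaluation point $s_{n}$ in place of a fixed angle, so I would re-derive it from the moment generating function: specialize Theorem~\ref{thm:MGF} to $m = 1$, $t_{1} = e^{is_{n}}$, $\vec{\alpha} = \vec{0}$, $W \equiv 0$ (so that $u_{0} = -u_{1}$), and differentiate the resulting asymptotic expansion in $u_{1}$ as permitted by that theorem. This yields, uniformly for $s_{n}$ in the fixed compact set above, that the $j$-th cumulant of $N_{n}(s_{n})$ equals $\tfrac{\log n}{\pi^{2}}(1+o(1))$ when $j = 2$ and is $\bigO(1)$ when $j \geq 3$; consequently the normalized cumulants of order $\geq 3$ vanish as $n\to\infty$ and the CLT follows. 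Combining this with the convergence of the centering threshold to the constant $-x$, a continuity point of $\Phi$, gives $\mathbb{P}(N_{n}(s_{n})\geq k_{\theta}) \to \mathbb{P}(Z \geq -x) = \Phi(x)$, which is exactly the statement of Corollary~\ref{coro:order}.

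The main obstacle is the uniformity: Corollary~\ref{coro:counting} is stated at a fixed angle, whereas here $N_{n}$ is evaluated at the moving point $s_{n}$, so one must confirm that the error terms in Theorem~\ref{thm:MGF}—and hence the cumulant asymptotics for $N_{n}$—are uniform over a fixed compact subset of $(0,2\pi)\setminus\{1\}$ containing every $s_{n}$, and that this uniformity is strong enough to survive the $\sqrt{\log n}$ rescaling used in the CLT. A subsidiary, more mechanical task is to check that all the error terms appearing above (the $\bigO(\log n/n)$ error in $\mathbb{E}[N_{n}(s_{n})]$ and the quadratic remainder from linearizing $\int\psi$) are $o(\sqrt{\log n})$, hence negligible after division by $\sqrt{\mathrm{Var}[N_{n}(s_{n})]}$.
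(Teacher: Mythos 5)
Your proposal is correct and follows essentially the same route as the paper: Gustavsson's conversion of the ordered-angle fluctuation into the counting-statistics CLT at an $n$-dependent angle, justified by the uniformity of Theorem \ref{thm:MGF} (hence of Corollary \ref{coro:counting}) for angles in compact subsets of $(0,2\pi)$ bounded away from $t_{0}=1$. The only cosmetic differences are that the paper first introduces the intermediate variable $Y_{n}=(\mu_{n}(\xi_{k_{\theta}})-k_{\theta})/\sigma_{n}$ and transfers to $\xi_{k_{\theta}}-\eta_{k_{\theta}}$ afterwards, and establishes the CLT at the moving point via the rescaling $u_{1}=\pi u/\sqrt{\log n}$ rather than your (equally valid) normalized-cumulant argument.
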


There has been a lot of progress in recent years towards understanding the global rigidity of various point processes, see e.g. \cite{ErdosYauYin, ABB2017, CFLW2021}. Our next corollary is a contribution in this direction: it establishes global rigidity upper bounds for (i) the counting statistics of \eqref{point process} and (ii) the ordered statistics of \eqref{point process}.

\begin{corollary}\label{coro:rigidity}(Rigidity.)
For each $\epsilon >0$ sufficiently small, there exist $c>0$ and $n_{0}>0$ such that
\begin{align}
& \mathbb P\left(\sup_{0 \leq \theta < 2\pi}\bigg|N_{n}(\theta)- n\int_{0}^{\theta}d\mu_V(e^{i\phi})  \bigg|\leq (1+\epsilon)\frac{1}{\pi}\log n \right) \geq 1-\frac{c}{\log n}, \label{probabilistic upper bound 1} \\
& \mathbb{P}\bigg( \max_{1 \leq k \leq n}  \psi(e^{i\eta_{k}})|\xi_{k}-\eta_{k}| \leq (1+\epsilon)\frac{1}{\pi} \frac{\log n}{n} \bigg) \geq 1-\frac{c}{\log n}, \label{probabilistic upper bound 2}
\end{align}
for all $n \geq n_{0}$.
\end{corollary}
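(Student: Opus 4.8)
The plan is to deduce both \eqref{probabilistic upper bound 1} and \eqref{probabilistic upper bound 2} from a single sharp two‑sided rigidity estimate for the counting function, and the starting point is the exact algebraic identity behind \eqref{the jumps are always a mess}. Writing $\mathsf{a}_n(\theta):=\im\log\mathsf{p}_n(e^{i\theta})$, that identity — applied at $\theta$ and at $0$ and using $N_n(0)=0$ — gives $\mathsf{a}_n(\theta)-\mathsf{a}_n(0)=-\pi N_n(\theta)+(\text{deterministic in }n,\theta)$, so after centering one obtains the exact relation
\[
N_n(\theta)-n\!\int_0^\theta d\mu_V=\tfrac1\pi\big(\mathcal B_n(0)-\mathcal B_n(\theta)\big),\qquad \mathcal B_n(\theta):=\mathsf{a}_n(\theta)-\mathbb E[\mathsf{a}_n(\theta)].
\]
Hence $\sup_\theta|N_n(\theta)-n\int_0^\theta d\mu_V|=\tfrac1\pi\max\big(\sup_\theta\mathcal B_n(\theta)-\mathcal B_n(0),\ \sup_\theta(-\mathcal B_n(\theta))+\mathcal B_n(0)\big)$, and \eqref{probabilistic upper bound 1} reduces to showing that, with probability $\ge 1-c/\log n$, both $\sup_\theta\pm\mathcal B_n(\theta)\le(1+\tfrac\epsilon2)\log n$ and $|\mathcal B_n(0)|\le\tfrac\epsilon2\log n$. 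The field $\mathcal B_n$ is the (centered) imaginary part of the logarithm of the characteristic polynomial of \eqref{point process}; its key feature is that $\mathrm{Var}[\mathcal B_n(\theta)]\sim\tfrac12\log n$, not $\tfrac1{\pi^2}\log n$, and it is exactly this halving that produces the sharp constant $\tfrac1\pi$ — splitting $1+\epsilon$ so as to detach the $\sup$ in $\theta$ from the single $\theta$‑independent shift $\mathcal B_n(0)$ is what separates the correct constant from the $\tfrac{\sqrt2}\pi$ that a naive union bound applied to $N_n(\theta)-\mathbb E[N_n(\theta)]$ directly would give.

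The exponential moments of $\mathcal B_n$ are supplied by Theorem~\ref{theorem U} (equivalently by Theorem~\ref{thm:MGF}) with $W\equiv0$, $\vec\alpha=\vec0$ and purely imaginary $\vec\beta$: combining \eqref{lol2} with the formula for $C_3$ gives, uniformly for $v$ in compact subsets of $\mathbb R$ and $\theta$ in compact subsets of $(0,2\pi)$,
\[
\mathbb E\big[e^{v\mathcal B_n(\theta)}\big]=\exp\!\Big(\tfrac{v^2}{4}\log n+\bigO(1)\Big),\qquad \mathbb E\big[e^{v\mathcal B_n(\theta)+w\mathcal B_n(0)}\big]=\exp\!\Big(\tfrac{v^2+w^2}{4}\log n+\bigO(1)\Big),
\]
the second identity (valid for $\theta$ bounded away from $0$, using that the $C_4$ cross term $2\beta_0\beta_1\log|1-t_\theta|$ is $\bigO(1)$) showing that $\mathcal B_n(\theta)$ and the base‑point value $\mathcal B_n(0)$ are asymptotically independent, and the first being available at $\theta=0$ itself via the $m=0$ case. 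I then run the standard union‑bound (first moment) argument, which here is the easy half of a Fyodorov--Hiary--Keating–type maximum estimate, cf.\ \cite{ABB2017, CFLW2021}: discretize $[\delta,2\pi-\delta]$ by a net $\{\theta_j\}$ of $\sim n$ points; since $\mathsf{a}_n'(\theta)=-\tfrac n2$ between eigenvalues and $\mathsf{a}_n$ only has $\pm\pi$ jumps at eigenvalues, a crowding estimate (every arc of length $1/n$ carries $\bigO(\log n/\log\log n)$ eigenvalues with overwhelming probability, obtained from the bound $\rho_1\lesssim n$ on the one‑point function) yields $\sup_\theta\mathcal B_n(\theta)\le\max_j\mathcal B_n(\theta_j)+o(\log n)$; and the Chernoff bound $\mathbb P(\mathcal B_n(\theta_j)>a)\le e^{-a^2/\log n+\bigO(1)}$ with $a=(1+\tfrac\epsilon2)\log n$ gives $\mathbb P(\max_j\mathcal B_n(\theta_j)>a)\le n\cdot\bigO(n^{-(1+\epsilon/2)^2})=\bigO(n^{-\epsilon})\le c/\log n$. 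The tail bound for $|\mathcal B_n(0)|$ is the same estimate at one point, and the arcs near $\theta=0$ and $\theta=2\pi$ — where $t_\theta=e^{i\theta}$ approaches the marked point $1$ and the uniformity of Theorem~\ref{theorem U} degrades — are handled separately, using the crowding estimate together with the fact that there $\mathrm{Var}[N_n(\theta)-\mathbb E N_n(\theta)]\sim\tfrac1{\pi^2}\log(n\theta)$ is strictly smaller, so the required bound holds with room to spare.

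Finally, \eqref{probabilistic upper bound 2} follows from \eqref{probabilistic upper bound 1} (used with some $\epsilon'<\epsilon$) by the usual duality between a counting function and its generalized inverse $\xi_k$, as in Gustavsson's argument \cite{Gustavsson}: on that event the relations $N_n(\xi_k)=k$ and $k=n\int_0^{\eta_k}d\mu_V$ give $n\big|\int_{\xi_k}^{\eta_k}\psi(e^{i\phi})\,d\phi\big|\le(1+\epsilon')\tfrac1\pi\log n$ simultaneously for all $k$; since $\psi$ is continuous and bounded below on $\mathbb T$, this forces $|\xi_k-\eta_k|=\bigO(\tfrac{\log n}{n})$, whence a first‑order Taylor expansion $\int_{\xi_k}^{\eta_k}\psi=\psi(e^{i\eta_k})(\eta_k-\xi_k)(1+o(1))$ yields $\psi(e^{i\eta_k})|\xi_k-\eta_k|\le(1+\epsilon)\tfrac1\pi\tfrac{\log n}{n}$ for $n$ large, uniformly in $k$ (the extreme indices $k\approx1,n$ being even easier, since $|N_n(\theta)-\mathbb E N_n(\theta)|$ is uniformly smaller near $\theta=0,2\pi$). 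I expect the main obstacle to be the middle step: securing the exponential‑moment asymptotics with enough uniformity (in particular near $\theta=0,2\pi$) and combining the crowding, discretization, and base‑point splitting so as to land on the sharp constant $\tfrac1\pi$; by contrast, the exact identity in the first paragraph and the passage to ordered statistics are routine.
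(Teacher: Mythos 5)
Your central mechanism for the sharp constant $\tfrac1\pi$ is sound and is, at heart, the same as the paper's: the fluctuation of $N_n(\theta)$ carries variance $\sim\tfrac{\log n}{\pi^2}$, but half of it is attached to the fixed base point $1$ and must be decoupled before the union bound, so that each of the $\sim n$ discrete points only pays a Gaussian tail with variance $\sim\tfrac{\log n}{2\pi^2}$. You implement the decoupling through the exact identity $N_n(\theta)-\mathbb{E}N_n(\theta)=\tfrac1\pi\big(\mathcal B_n(\theta)-\mathcal B_n(0)\big)$ and separate tail bounds on $\sup_\theta|\mathcal B_n(\theta)|$ and $|\mathcal B_n(0)|$; the paper (Lemma \ref{lemma: A r eps}) achieves the same effect by comparing $\overline N_n(\eta_k)$ with the average of $\overline N_n$ over a fixed auxiliary grid $S_m$ (so that the compensating exponent $u_0$ in \eqref{expmomentbound} is split into $m$ pieces contributing $u^2/m$), and by discretizing $N_n$ at the classical locations $\eta_k$ using monotonicity, which needs no continuity estimate at all. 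Your Chernoff computation at the net points, the one-point bound on $\mathcal B_n(0)$, and the duality step to \eqref{probabilistic upper bound 2} are all fine in the bulk.

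There are, however, two genuine gaps. First, and most seriously, the arcs near $\theta=0,2\pi$: every exponential-moment input you use (Theorem \ref{theorem U}/\ref{thm:MGF}) is uniform only for singularities in compact subsets of $(0,2\pi)_{\mathrm{ord}}^{m}$, i.e.\ bounded away from the distinguished point $1$, and your proposed fix — that $\mathrm{Var}[N_n(\theta)-\mathbb{E}N_n(\theta)]\sim\tfrac1{\pi^2}\log(n\theta)$ there, "with room to spare" — is a merging-singularity asymptotic that is proved nowhere in this paper (and for non-constant $V$ nowhere in the literature; the merging results cited are for $V=0$). Moreover a variance alone would not control a supremum over these arcs. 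The paper needs a separate device precisely here: Lemma \ref{lemma:delta=0} re-bases the counting function at $-\pi$, applies the bulk estimate to $\tilde N_n$, and absorbs the single shift $N_n(\pi)-\mu_n(\pi)$ by Chebyshev at scale $\pi\epsilon^2$; your proof is incomplete without some such argument (a rotation of the whole problem would also do, but must be said and justified). Second, your passage from the net to the full supremum of $\mathcal B_n$ rests on a crowding estimate ("$\bigO(\log n/\log\log n)$ points per arc of length $1/n$ with overwhelming probability, from $\rho_1\lesssim n$"): a first-moment bound $\rho_1\lesssim n$ only gives $\mathbb{E}[N_n(I)]=\bigO(1)$ and, via Markov, a failure probability far too large to union over $\sim n$ arcs; you would need higher correlation bounds or the determinantal/Bernoulli structure of the process, neither of which is established or invoked in the paper. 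This second gap is avoidable: since $N_n$ is monotone, discretize it at the $\eta_k$ exactly as in \eqref{lol1} and only then apply your decomposition $\overline N_n(\eta_k)=\tfrac1\pi(\mathcal B_n(\eta_k)-\mathcal B_n(0))+o(1)$, which removes the need for any modulus-of-continuity control of $\mathcal B_n$. With that repair and with an explicit treatment of the edge (e.g.\ the paper's re-basing trick), your route would give a correct, and conceptually clean, alternative to the paper's grid-averaging argument.
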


\begin{remark}
It follows from \eqref{probabilistic upper bound 2} that $\lim_{n\to \infty}\mathbb{P}\big( \max_{1 \leq k \leq n}  \psi(e^{i\eta_{k}})|\xi_{k}-\eta_{k}| \leq (1+\epsilon)\frac{1}{\pi}\frac{\log n}{n} \big) = 1$. We believe that the upper bound $(1+\epsilon)\frac{1}{\pi}$ is sharp, in the sense that we expect the following to hold true:
\begin{align}\label{lol5}
\lim_{n\to +\infty}\mathbb{P}\bigg( (1-\epsilon)\frac{1}{\pi}\frac{\log n}{n}  \leq \max_{1 \leq k \leq n}  \psi(e^{i\eta_{k}})|\xi_{k}-\eta_{k}| \leq (1+\epsilon)\frac{1}{\pi} \frac{\log n}{n}  \bigg) = 1.
\end{align}
Our belief is supported by the fact that \eqref{lol5} was proved in \cite[Theorem 1.5]{ABB2017} for $V=0$, $\psi(e^{i\theta})=\frac{1}{2\pi}$.
\end{remark}

\section{Differential identity for $D_n$}
Our general strategy to prove Theorem \ref{theorem U} is inspired by the earlier works \cite{Krasovsky, DIK2011, BerWebbWong, Charlier}. The first step consists of establishing a differential identity which expresses derivatives of $\log D_n(\vec\alpha,\vec\beta,V,W)$ in terms of the solution $Y$ to a Riemann-Hilbert (RH) problem (see Proposition \ref{prop: diff id}). Throughout the paper, $\mathbb{T}$ is oriented in the counterclockwise direction. We first state the RH problem for $Y$.

\subsubsection*{RH problem for $Y(\cdot) = Y_n(\cdot;\vec\alpha,\vec\beta,V,W)$}
\begin{itemize}
\item[(a)] $Y : \C \setminus \mathbb{T} \to \mathbb{C}^{2 \times 2}$ is analytic.
	
\item[(b)] For each $z \in \mathbb{T}\setminus \{t_{0},\dots,t_{m}\}$, the boundary values $\lim_{z' \to z}Y(z')$ from the interior and exterior of $\mathbb{T}$ exist, and are denoted by $Y_{+}(z)$ and $Y_{-}(z)$ respectively. Furthermore, $Y_{+}$ and $Y_{-}$ are continuous on $\mathbb{T}\setminus \{t_{0},\dots,t_{m}\}$, and are related by the jump condition 
\begin{equation}\label{Y jump}
Y_+(z) = Y_-(z)\begin{pmatrix}1& z^{-n}f(z) \\ 0& 1\end{pmatrix}, \qquad z \in \mathbb{T}\setminus \{t_{0},\dots,t_{m}\},
\end{equation}
where $f$ is given by \eqref{uni: f}.

\item[(c)] $Y$ has the following asymptotic behavior at infinity: 
\begin{align*}
Y(z) = (1+\bigO(z^{-1}))z^{n \sigma_{3}}, \qquad \mbox{as } z \to \infty,
\end{align*}
where $\sigma_{3} = \begin{pmatrix} 1 & 0 \\ 0 & -1 \end{pmatrix}$. 
\item[(d)] As $z \rightarrow t_k$, $k=0, \dots, m$, $z \in \C \setminus \mathbb{T}$, 
\begin{align*}
Y(z) = \begin{cases}
\begin{pmatrix}
\bigO(1) & \bigO(1) + \bigO({|z-t_k|}^{\alpha_{k}}) \\ \bigO(1) & \bigO(1) + \bigO({|z-t_k|}^{\alpha_{k}})
\end{pmatrix}, &\mbox{if } \re \alpha_k \ne 0, \\
\begin{pmatrix}
\bigO(1) & \bigO(\log{|z-t_k|}) \\ \bigO(1) & \bigO(\log{|z-t_k|})
\end{pmatrix}, &\mbox{if } \re \alpha_k = 0.
\end{cases}
\end{align*}
\end{itemize}
Suppose $\{p_k(z) = \kappa_{k} z^{k}+\dots\}_{k\geq 0}$ and $\{\hat{p}_k(z)=\kappa_{k} z^{k}+\dots\}_{k\geq 0}$ are two families of polynomials satisfying the orthogonality conditions
\begin{equation}\label{orthogonality relations}
\begin{cases}
\frac{1}{2\pi}\int_0^{2\pi} p_k(z)z^{-j}f(z)d\theta= \kappa_k^{-1}\delta_{jk}, \\
\frac{1}{2\pi}\int_0^{2\pi} \hat{p}_k(z^{-1})z^{j}f(z)d\theta= \kappa_k^{-1}\delta_{jk},
\end{cases}
\quad z=e^{i\theta}, \quad j= 0,\dots,k.
\end{equation}
Then the function $Y(z)$ defined by
\begin{equation}\label{Y_solution}
Y(z) = \begin{pmatrix}
\kappa_n^{-1}p_n(z) & \kappa_n^{-1}\int_{\mathbb{T}} \frac{p_n(s)f(s)}{2\pi i s^n(s-z)}ds \\
-\kappa_{n-1}z^{n-1}\hat{p}_{n-1}(z^{-1}) & 
-\kappa_{n-1}\int_{\mathbb{T}} \frac{\hat{p}_{n-1}(s^{-1})f(s)}{2\pi i s(s-z)}ds
\end{pmatrix}
\end{equation}
solves the RH problem for $Y$.
It was first noticed by Fokas, Its and Kitaev \cite{FokasItsKitaev} that orthogonal polynomials can be characterized by RH problems (for a contour on the real line). The above RH problem for $Y$, whose jumps lie on the unit circle, was already considered in e.g. \cite[eq. (1.26)]{BaikDeiftJohansson} and \cite[eq. (3.1)]{DIK2011} for more specific $f$. 

The monic orthogonal polynomials $\kappa_{n}^{-1}p_{n}, \kappa_{n}^{-1}\hat{p}_{n}$, and also $Y$, are unique (if they exist). The orthogonal polynomials exist if $f$ is strictly positive almost everywhere on $\mathbb{T}$ (this is the case if $W$ is real-valued, $\alpha_{k}>-1$ and $i\beta_{k} \in (-\frac{1}{2},\frac{1}{2})$). More generally, a sufficient condition to ensure existence of $p_{n}, \hat{p}_{n}$ (and therefore of $Y$) is that $D_{n}^{(n)} \neq 0 \neq D_{n+1}^{(n)}$, where $D_{l}^{(n)} =: \det (f_{j-k})_{j,k=0,\dots,l-1}$, $l\geq 1$ (note that $D_{n}^{(n)}=D_{n}(\vec{\alpha},\vec{\beta},V,W)$), see e.g. \cite[Section 2.1]{ClKr}. In fact, 
\begin{equation}\label{uni: pn}
p_{k}(z) = \frac{\begin{vmatrix}
f_{0} & f_{-1} & \dots & f_{-k}\\
\vdots & \vdots & \ddots & \vdots\\
f_{k-1}& f_{k-2} & \dots & f_{-1}\\
1 & z & \dots & z^k
\end{vmatrix}}{\sqrt{D_k^{(n)}}\sqrt{D_{k+1}^{(n)}}}, \quad \hat{p}_k(z) = \frac{\begin{vmatrix}
f_{0} & f_{-1} & \dots & f_{-k+1} & 1 \\
f_{1} & f_{0} & \dots & f_{-k+2} & z \\
\vdots & \vdots &  & \vdots & \vdots \\
f_{k}& f_{k-1} & \dots & f_{1} & z^{k} \\
\end{vmatrix}}{\sqrt{D_k^{(n)}}\sqrt{D_{k+1}^{(n)}}},
\end{equation}
and $\kappa_k= (D_{k}^{(n)})^{1/2}/(D_{k+1}^{(n)})^{1/2}$. (Note that $p_{k}$, $\hat{p}_{k}$ and $\kappa_{k}$ are unique only up to multiplicative factors of $-1$. This can be fixed with a choice of the branch for the above roots. However, since $Y$ only involves $\kappa_{n}^{-1}p_{n}$ and $\kappa_{n-1}\hat{p}_{n-1}$, which are unique, this choice for the branch is unimportant for us.) If $D_{k}^{(n)}\neq 0$ for $k=0,1,\dots,n+1$, it follows that
\begin{equation}\label{prod_det}
D_n(\vec{\alpha},\vec{\beta},V,W) = \prod_{j=0}^{n-1} \kappa_j^{-2}. 
\end{equation}

\begin{lemma}\label{lemma: YinvYprime21}
Let $n \in \mathbb{N}$ be fixed, and assume that $D_k^{(n)}(f)\neq 0$, $k = 0,1,\dots,n+1$. For any $z\ne 0$, we have
\begin{equation}
[Y^{-1}(z)Y'(z)]_{21}z^{-n+1} = \sum_{k = 0}^{n-1}\hat{p}_k(z^{-1})p_k(z),
\end{equation}
where $Y(\cdot) = Y_n(\cdot;\vec\alpha,\vec\beta,V,W)$.
\end{lemma}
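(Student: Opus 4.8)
The plan is to compute $[Y^{-1}(z)Y'(z)]_{21}$ directly from the explicit formula \eqref{Y_solution} for $Y$, using the Christoffel--Darboux structure of the orthogonal polynomials. First I would write out the matrix product: since $(Y^{-1})_{2j} = (\det Y)^{-1}(\text{cofactors})$ and $\det Y \equiv 1$ (the jump matrix in \eqref{Y jump} has determinant $1$, $Y$ is analytic off $\mathbb{T}$ with $\det Y(z) \to 1$ at infinity by condition (c), so $\det Y \equiv 1$ by Liouville), we have $[Y^{-1}(z)Y'(z)]_{21} = Y_{11}(z)Y_{21}'(z) - Y_{21}(z)Y_{11}'(z)$, where $Y_{11}(z) = \kappa_n^{-1}p_n(z)$ and $Y_{21}(z) = -\kappa_{n-1}z^{n-1}\hat{p}_{n-1}(z^{-1})$. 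So the quantity to understand is a Wronskian-type expression $Y_{11}Y_{21}' - Y_{21}Y_{11}'$ built from $p_n$ and $z^{n-1}\hat p_{n-1}(z^{-1})$.

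Next I would recall (or re-derive) the Christoffel--Darboux formula for orthogonal polynomials on the unit circle: the reproducing kernel $K_n(z,w) := \sum_{k=0}^{n-1}\hat p_k(w^{-1})p_k(z)$ associated with the system \eqref{orthogonality relations} satisfies a closed-form identity
\[
K_n(z,w) = \frac{\hat p_n^*(z)\,\overline{\hat p_n^*(w)}\text{-type terms}}{1 - z\bar w},
\]
more precisely (in the form adapted to these non-Hermitian weights)
\[
\sum_{k=0}^{n-1}\hat p_k(z^{-1})p_k(z) = \frac{\kappa_{n-1}^2\bigl( z^n \hat p_{n-1}(z^{-1})\,p_n(z)' \cdot z^{-?} - \cdots\bigr)}{\cdots},
\]
but rather than invoking it as a black box I would derive the needed identity by the standard telescoping argument: use the recurrence relations linking $p_k, \hat p_k$ (the Szeg\H{o} recurrences, equivalently $zp_k(z) = \tfrac{\kappa_k}{\kappa_{k+1}}p_{k+1}(z) + (\text{lower})$ and the reversed-polynomial relations) to show that each summand $\hat p_k(z^{-1})p_k(z)$ is a difference of consecutive terms of the form $c_k\, z^{-1}[\,z^{k+1}\hat p_k(z^{-1})p_{k+1}(z) - z^{k}\hat p_{k+1}(z^{-1})\cdot(\dots)\,]$, so that the sum telescopes to a single expression in $p_n, \hat p_{n-1}$ and $\kappa_{n-1}$. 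Matching that expression against $\kappa_{n-1}\kappa_n^{-1}\bigl(p_n(z)\,\partial_z[z^{n-1}\hat p_{n-1}(z^{-1})] - z^{n-1}\hat p_{n-1}(z^{-1})\,p_n'(z)\bigr)z^{-n+1}$, one gets exactly the claimed identity; the factor $z^{-n+1}$ and the normalization $\kappa_n,\kappa_{n-1}$ in \eqref{Y_solution} are what make the constants come out to $1$.

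An alternative, possibly cleaner route — and the one I would actually try first — is to avoid the recurrences entirely and use the differential-equation characterization: $Y^{-1}Y'$ has no jump across $\mathbb{T}$ (since the jump matrix \eqref{Y jump} has constant entries off the factor $z^{-n}f(z)$... actually $z^{-n}f(z)$ is not constant, so care is needed), so instead I would argue via the known fact that $R(z) := Y^{-1}(z)Y'(z)$ is, away from $0$ and the $t_k$, a rational-in-structure object controlled by conditions (a)--(d), and compute its $(2,1)$ entry from the large-$z$ expansion and the local data. Concretely, from condition (c), $Y(z) = (I + Y_1 z^{-1} + \cdots)z^{n\sigma_3}$, and plugging into \eqref{Y_solution} identifies $[Y_1]_{21}$ etc. in terms of leading coefficients; then $[Y^{-1}Y']_{21} = [Y_{11}Y_{21}' - Y_{21}Y_{11}']$ is a polynomial in $z$ of degree $\le 2n-2$ (from the product of two degree-$(n-1)$ polynomials, the $z^n$-cancellation in $z^{n-1}\hat p_{n-1}(z^{-1})$ being a polynomial of degree $n-1$), and so is $z^{n-1}\sum_{k=0}^{n-1}\hat p_k(z^{-1})p_k(z)$; one then checks the two polynomials agree by comparing enough coefficients or by showing their difference vanishes using orthogonality. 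I expect the main obstacle to be bookkeeping the normalizations and the $z^{-n+1}$ power correctly — ensuring the telescoping constants collapse to exactly $1$ rather than some ratio of $\kappa$'s — and handling the fact that $\hat p_{n-1}(z^{-1})$ is a Laurent polynomial so that $z^{n-1}\hat p_{n-1}(z^{-1})$ must be treated as an honest polynomial when differentiating; neither is deep, but both are where sign and index errors creep in. The cleanest writeup will likely combine the two ideas: use \eqref{Y_solution} to reduce to the Wronskian, then invoke the Christoffel--Darboux identity for \eqref{orthogonality relations} in the form already available in the literature (e.g. Simon's OPUC treatise or \cite{DIK2011}) to finish.
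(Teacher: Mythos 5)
Your recommended route is exactly the paper's proof: use $\det Y\equiv 1$ and the explicit formula \eqref{Y_solution} to reduce $[Y^{-1}Y']_{21}$ to the Wronskian $Y_{11}Y_{21}'-Y_{21}Y_{11}'$ in $p_n$ and $z^{n-1}\hat p_{n-1}(z^{-1})$, then pass to $(p_n,\hat p_n)$ via the recurrence and finish with the Christoffel--Darboux formula, both quoted from \cite{DIK2011} (Lemmas 2.2 and 2.3 there). Apart from the garbled intermediate sketch of the CD kernel and the abandoned ``no-jump'' alternative, the proposal is correct and essentially identical in approach.
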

\begin{proof}
The assumptions imply that $\kappa_k= (D_k^{(n)})^{1/2}/(D_{k+1}^{(n)})^{1/2}$ is finite and nonzero and that $p_k, \hat{p}_k$ exist for all $k \in \{0,\dots,n\}$. Note that (a) $\det Y: \C \setminus \T \to \C$ is analytic, (b) $(\det Y)_{+}(z) = (\det Y)_{-}(z)$ for $z \in \T\setminus\{t_{0},\ldots,t_{m}\}$, (c) $\det Y(z) = o(|z-t_{k}|^{-1})$ as $z \to t_{k}$ and (d) $\det Y(z) = 1+o(1)$ as $z\to \infty$. Hence, using successively Morera's Theorem, Riemann's removable singularities theorem, and Liouville's theorem, we conclude that $\det Y \equiv 1$. Using \eqref{Y_solution} and the fact that $\det Y \equiv 1$, we obtain
\begin{align*}
\left[Y^{-1}(z)Y'(z)\right]_{21}&=\frac{z^n}{\kappa_n}\cdot\frac{\kappa_{n-1}}{z}\hat{p}_{n-1}(z^{-1})\frac{d}{dz}p_n(z)-\kappa_n^{-1}p_n(z)\frac{d}{dz}\left[z^n\cdot\frac{\kappa_{n-1}}{z}\hat{p}_{n-1}(z^{-1})\right].
\end{align*}
Using the recurrence relation (see \cite[Lemma 2.2]{DIK2011})
\begin{align*}
\frac{\kappa_{n-1}}{z}\hat{p}_{n-1}(z^{-1})&= \kappa_{n}\hat{p}_{n}(z^{-1})-\hat{p}_{n}(0)z^{-n}p_{n}(z),
\end{align*}
we then find
\begin{align*}
\left[Y^{-1}(z)Y'(z)\right]_{21} = z^{n-1}\left(-np_n(z)\hat{p}_n(z^{-1})+z\Big(\hat{p}_n(z^{-1})\frac{d}{dz}p_n(z)-p_n(z)\frac{d}{dz}\hat{p}_n(z^{-1})\Big)\right).
\end{align*}
The claim now directly follows from the Christoffel-Darboux formula \cite[Lemma 2.3]{DIK2011}.
\end{proof}

\begin{proposition}\label{prop: diff id}
Let $n \in \mathbb{N}_{\geq 1}:=\{1,2,\dots\}$ be fixed and suppose that $f$ depends smoothly on a parameter $\gamma$. If $D_k^{(n)}(f)\neq 0$ for $k = n-1,n,n+1$, then the following differential identity holds
\begin{align}\label{uni: diff id}
\partial_{\gamma} \log D_n(\vec{\alpha},\vec{\beta},V,W) = \frac{1}{2\pi}\int_{0}^{2\pi}[Y^{-1}(z)Y'(z)]_{21}z^{-n+1}\partial_{\gamma}f(z)d\theta, \qquad z=e^{i\theta}.
\end{align}
\end{proposition}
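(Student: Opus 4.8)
The plan is to combine the product representation \eqref{prod_det} with a derivative formula for each squared leading coefficient, and then to convert the resulting sum of orthogonal polynomials into $Y$ by means of Lemma \ref{lemma: YinvYprime21}. I work at a fixed value of $\gamma$. Since the monic orthogonal polynomials $\pi_j := \kappa_j^{-1}p_j$ and $\widehat{\pi}_j := \kappa_j^{-1}\hat p_j$ are given by Cramer's rule (cf. \eqref{uni: pn}) in terms of the Fourier coefficients of $f$, which depend smoothly on $\gamma$, the polynomials $\pi_j,\widehat{\pi}_j$ and the numbers $h_j := \kappa_j^{-2}$ are smooth in $\gamma$, and differentiation under the integral sign below is legitimate because $f$ and $\partial_\gamma f$ are integrable on $\mathbb{T}$ (the Fisher--Hartwig factors and their $\vec\alpha$-, $\vec\beta$-derivatives remain integrable because $\re\alpha_k>-1$). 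Strictly speaking \eqref{prod_det} and Lemma \ref{lemma: YinvYprime21} require $D_k^{(n)}(f)\neq 0$ for all $k=0,\dots,n+1$; this holds in a neighbourhood of $\gamma$ as soon as it holds at $\gamma$, and since $Y$ (hence the right-hand side of \eqref{uni: diff id}) depends continuously on $\gamma$ whenever only $D_{n-1}^{(n)},D_{n}^{(n)},D_{n+1}^{(n)}$ are nonzero, the general case of the proposition follows from this dense case by continuity.

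The computational heart is the evaluation of $\partial_\gamma h_j$. First I would record the identity $h_j = \frac{1}{2\pi}\int_0^{2\pi}\pi_j(z)\widehat{\pi}_j(z^{-1})f(z)\,d\theta$ (with $z=e^{i\theta}$): expanding $\widehat{\pi}_j(z^{-1})$ in the powers $z^0,\dots,z^{-j}$ and invoking \eqref{orthogonality relations} (equivalently $\frac{1}{2\pi}\int_0^{2\pi}\pi_j(z)z^{-l}f(z)\,d\theta = h_j\delta_{jl}$ for $l=0,\dots,j$) kills every term except the $z^{-j}$ one, whose coefficient is $1$ by monicity. Differentiating this identity in $\gamma$, the contribution of $\partial_\gamma\pi_j$ vanishes because $\partial_\gamma\pi_j$ has degree $\leq j-1$ (the leading coefficient of $\pi_j$ is the constant $1$) while $\widehat{\pi}_j(z^{-1})$ is orthogonal to $z^0,\dots,z^{j-1}$ against $f\,d\theta$; symmetrically, the contribution of $\partial_\gamma[\widehat{\pi}_j(z^{-1})]$ vanishes since it is a polynomial in $z^{-1}$ of degree $\leq j-1$ and $\pi_j$ is orthogonal to $z^0,\dots,z^{-(j-1)}$ against $f\,d\theta$. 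Hence $\partial_\gamma h_j = \frac{1}{2\pi}\int_0^{2\pi}\pi_j(z)\widehat{\pi}_j(z^{-1})\,\partial_\gamma f(z)\,d\theta$, and dividing by $h_j$ and using $h_j^{-1}\pi_j(z)\widehat{\pi}_j(z^{-1}) = p_j(z)\hat p_j(z^{-1})$ gives
\[
\frac{\partial_\gamma h_j}{h_j} = \frac{1}{2\pi}\int_0^{2\pi}p_j(z)\hat p_j(z^{-1})\,\partial_\gamma f(z)\,d\theta .
\]

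Finally, using \eqref{prod_det} in the form $\partial_\gamma\log D_n = \sum_{j=0}^{n-1}\partial_\gamma h_j/h_j$ and summing the previous display over $j=0,\dots,n-1$, I obtain $\partial_\gamma\log D_n = \frac{1}{2\pi}\int_0^{2\pi}\big(\sum_{j=0}^{n-1}\hat p_j(z^{-1})p_j(z)\big)\,\partial_\gamma f(z)\,d\theta$, and replacing the parenthesis by $[Y^{-1}(z)Y'(z)]_{21}z^{-n+1}$ via Lemma \ref{lemma: YinvYprime21} yields \eqref{uni: diff id}. I expect the only mildly delicate points to be the justification of differentiation under the integral sign together with the smoothness in $\gamma$ of the orthogonal polynomials when $\gamma$ is one of the Fisher--Hartwig exponents, and the hypothesis reconciliation described in the first paragraph; the algebra itself occupies only a few lines.
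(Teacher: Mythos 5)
Your proposal is correct and takes essentially the same route as the paper: you differentiate the normalization identity $\kappa_j^{-2}=\frac{1}{2\pi}\int_0^{2\pi}\pi_j(z)\widehat{\pi}_j(z^{-1})f(z)\,d\theta$ and use the orthogonality relations \eqref{orthogonality relations} to kill the terms coming from $\partial_\gamma\pi_j$ and $\partial_\gamma\widehat{\pi}_j$ (the paper does the equivalent computation by differentiating the orthogonality condition with $z^{-j}$ replaced by $\kappa_j^{-1}\hat p_j(z^{-1})$, arriving at $-2\partial_\gamma\kappa_j/\kappa_j$), then sum via \eqref{prod_det} and invoke Lemma \ref{lemma: YinvYprime21}, finishing with the same continuity argument the paper uses to pass from the stronger hypothesis $D_k^{(n)}\neq 0$, $k=0,\dots,n+1$, to the stated one. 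The only refinement worth making is in that last step: rather than asserting density in the parameter $\gamma$ (a lower $D_k^{(n)}$ could in principle vanish identically along the given one-parameter family), observe that both sides of \eqref{uni: diff id} are rational in the finitely many Fourier coefficients of $f$ and linear in those of $\partial_\gamma f$, so one can perturb the moment data itself, where the exceptional set is a finite union of proper hypersurfaces.
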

\begin{remark}
Identity \eqref{uni: diff id} will be used (with a particular choice of $\gamma$) in the proof of Proposition \ref{prop: int V}  to deform the potential, see \eqref{int: V}.
\end{remark}
\begin{proof}
We first prove the claim under the stronger assumption that $D_k^{(n)}(f)\neq 0$ for $k = 0,1,\dots,n+1$. 
In this case, $\kappa_k= (D_k^{(n)})^{1/2}/(D_{k+1}^{(n)})^{1/2}$ is finite and nonzero and $p_k, \hat{p}_k$ exist for all $k = 0,1,\dots,n$.
Replacing $z^{-j}$ with $\hat{p}_{j}(z^{-1})\kappa_j^{-1}$ in the first orthogonality condition in \eqref{orthogonality relations} (with $k=j$), and differentiating with respect to $\gamma$, we obtain, for $j = 0, \dots, n-1$,
\begin{align}\nonumber
    -\frac{\partial_\gamma[\kappa_j]}{\kappa_j}&=\frac{\kappa_j}{2\pi}\partial_\gamma\left[\int_0^{2\pi}p_j(z)\hat{p}_j(z^{-1})\kappa_j^{-1}f(z)d\theta\right] 
    	\\\label{partialgammakappaj}
    &=\frac{1}{2\pi}\int_0^{2\pi}p_j(z)\hat{p}_j(z^{-1})\partial_\gamma[f(z)]d\theta+\frac{\kappa_j}{2\pi}\int_0^{2\pi}\partial_\gamma\left[p_j(z)\hat{p}_j(z^{-1})\kappa_j^{-1}\right]f(z)d\theta.
\end{align}
The second term on the right-hand side can be simplified as follows:  
\begin{align}\label{kappajintint}
\frac{\kappa_j}{2\pi}\int_0^{2\pi}\partial_\gamma\left[p_j(z)\hat{p}_j(z^{-1})\kappa_j^{-1}\right]f(z)d\theta = \frac{\kappa_j}{2\pi}\int_0^{2\pi}\partial_\gamma[p_j(z)]\hat{p}_j(z^{-1})\kappa_j^{-1}f(z)d\theta = \frac{\partial_\gamma[\kappa_j]}{\kappa_j},
\end{align}
where the first and second equalities use the first and second relations in \eqref{orthogonality relations}, respectively. Combining \eqref{partialgammakappaj} and \eqref{kappajintint}, we find
\begin{align}
    -2\frac{\partial_\gamma[\kappa_j]}{\kappa_j}&=\frac{1}{2\pi}\int_0^{2\pi}p_j(z)\hat{p}_j(z^{-1})\partial_\gamma[f(z)]d\theta.
\end{align}
Taking the log of both sides of \eqref{prod_det} and differentiating with respect to $\gamma$, we get
\begin{align}
    \partial_{\gamma}\log D_{n}(\vec{\alpha},\vec{\beta},V,W)&=-2\sum_{j=0}^{n-1}\frac{\partial_\gamma[\kappa_j]}{\kappa_j}=\frac{1}{2\pi}\int_0^{2\pi}\bigg(\sum_{j=0}^{n-1}p_j(z)\hat{p}_j(z^{-1})\bigg)\partial_\gamma[f(z)]d\theta.
\end{align}
An application of Lemma \ref{lemma: YinvYprime21} completes the proof under the assumption that $D_k^{(n)}(f)\neq 0$, $k = 0,1,\dots,n+1$. Since the existence of $Y$ only relies on the weaker assumption $D_k^{(n)}(f)\neq 0$, $k = n-1,n,n+1$, 
the claim follows from a simple continuity argument.
\end{proof}

\section{Steepest descent analysis}\label{section:steepest descent analysis}

In this section, we use the Deift-Zhou \cite{DZ1993} steepest descent method to obtain large $n$ asymptotics for $Y$.

\subsection{Equilibrium measure and $g$-function}
The first step of the method is to normalize the RH problem at $\infty$ by means of a so-called $g$-function built in terms of the equilibrium measure \eqref{uni: dmu}. Recall from \eqref{V as a Laurent series}, \eqref{W as a Laurent series} and \eqref{psi solved explicitly} that $U$ is an open annulus containing $\mathbb{T}$ in which $V$, $W$ and $\psi$ are analytic.

Define the function $g:\mathbb{C}\setminus \big((-\infty,-1]\cup \mathbb{T} \big)\to\mathbb{C}$ by
\begin{equation}\label{g-def}
g(z) = \int_{\mathbb{T}} \log_{s} (z-s) \psi(s) \frac{ds}{is},
\end{equation}
where for $s = e^{i \theta} \in \mathbb{T}$ and $\theta \in [-\pi,\pi)$, the function $z \mapsto \log_{s} (z-s)$ is analytic in $\mathbb{C}\setminus \big((-\infty,-1]\cup \{e^{i \theta'}: -\pi \leq \theta' \leq \theta \}\big)$ and such that $\log_{s} (2)=\log |2|$. 

\begin{lemma}
The function $g$ defined in (\ref{g-def}) is analytic in $\mathbb{C}\setminus \big((-\infty,-1]\cup \mathbb{T} \big)$, satisfies $g(z) = \log z + \bigO(z^{-1})$ as $z \to \infty$, and possesses the following properties:
\begin{align}
& g_{+}(z) + g_{-}(z) = 2 \int_{\mathbb{T}} \log |z-s| \psi(s)\frac{ds}{is} + i\big(\pi + \hat{c} + \arg z \big), & & z \in \mathbb{T}, \label{g+ + g-} \\
& g_{+}(z) - g_{-}(z) = 2 \pi i \int_{\arg z}^{\pi} \psi(e^{i\theta})d\theta, & & z \in \mathbb{T}, \label{g+ - g- 2} 
	\\ 
& g_{+}(z) - g_{-}(z) = 2 \pi i, & & z \in (-\infty,-1), \label{g+ - g- 1}
\end{align}
where $\hat{c}=\int_{-\pi}^{\pi} \theta \psi(e^{i\theta}) d\theta$ and $\arg z \in (-\pi,\pi)$. 
\end{lemma}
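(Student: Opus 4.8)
The plan is to regard $g$ as a superposition of the branches $\log_{s}(z-s)$, $s\in\mathbb{T}$, and to reduce each assertion to a local statement about one such branch; throughout I will use that $\int_{\mathbb{T}}\psi(s)\frac{ds}{is}=\int_{-\pi}^{\pi}\psi(e^{i\theta})\,d\theta=\mu_{V}(\mathbb{T})=1$, by \eqref{uni: dmu}. For \emph{analyticity}, fix $z_{0}\notin(-\infty,-1]\cup\mathbb{T}$ and set $\delta_{0}:=\mathrm{dist}\big(z_{0},(-\infty,-1]\cup\mathbb{T}\big)>0$. Since the cut of $z\mapsto\log_{s}(z-s)$ is contained in $(-\infty,-1]\cup\mathbb{T}$ for every $s$, the integrand in \eqref{g-def} is analytic in $z$ on $B(z_{0},\delta_{0}/2)$, jointly continuous in $(z,s)$, and uniformly bounded there (on that ball $|z-s|$ is bounded above and bounded below away from $0$, and—the cut and the normalization point moving continuously with $s$—the continuous branch $\arg_{s}(z-s)$ stays in a bounded set). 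Morera's and Fubini's theorems then give analyticity of $g$ on $\mathbb{C}\setminus\big((-\infty,-1]\cup\mathbb{T}\big)$, and one may differentiate under the integral.

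For the \emph{jump relations} I would compute the boundary values of each $\log_{s}(z-s)$ and integrate against $\psi$. For $z\in(-\infty,-1)$ this point lies on the cut of $z\mapsto\log_{s}(z-s)$ for \emph{every} $s\in\mathbb{T}$, and across it each branch jumps by $2\pi i$ with a sign independent of $s$; integrating and using total mass $1$ gives \eqref{g+ - g- 1}. For $z=e^{i\alpha}\in\mathbb{T}$ with $\alpha\in(-\pi,\pi)$, the cut $(-\infty,-1]\cup\{e^{i\theta'}:-\pi\le\theta'\le\theta\}$ of $z\mapsto\log_{s}(z-s)$ (with $s=e^{i\theta}$) passes through $e^{i\alpha}$ precisely when $\theta\ge\alpha$, so only those terms contribute a $2\pi i$ jump; integrating over $\theta\in(\alpha,\pi)$—the $+$ sign being dictated by the counterclockwise orientation of $\mathbb{T}$—gives \eqref{g+ - g- 2}. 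For \eqref{g+ + g-} I would insert the polar factorization $z-s=2i\,e^{i(\alpha+\theta)/2}\sin\tfrac{\alpha-\theta}{2}$, so that $|z-s|=2|\sin\tfrac{\alpha-\theta}{2}|$ and $\arg_{s}(z-s)\equiv\tfrac{\alpha+\theta}{2}\pm\tfrac{\pi}{2}\pmod{2\pi}$; averaging the two one-sided boundary values removes the $2\pi i$ jump, the modulus reproduces $2\int_{\mathbb{T}}\log|z-s|\psi(s)\frac{ds}{is}$, the $\tfrac{\theta}{2}$-terms integrate to $\tfrac12\hat{c}$ with $\hat{c}=\int_{-\pi}^{\pi}\theta\psi(e^{i\theta})\,d\theta$, and the surviving $\tfrac{\alpha}{2}$ and $\pm\tfrac{\pi}{2}$ contributions, after accounting for the branch integers of $\log_{s}$, assemble into $i(\pi+\hat{c}+\arg z)$. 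This last accounting—fixing the signs and the branch integers precisely enough to produce the constant $\pi+\hat{c}$ and the term $\arg z$—is the only genuinely delicate point of the proof; the other two jumps and the analyticity are routine.

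For the \emph{behavior at infinity}, note that by \eqref{g+ - g- 1} the function $g(z)-\log z$ (principal branch of $\log$) has cancelling jumps across $(-\infty,-1)$, hence extends analytically to all of $\{|z|>1\}$, in particular to a punctured neighborhood of $\infty$. The crude bound $|g(z)|\le\int_{\mathbb{T}}\big(\log|z-s|+|\arg_{s}(z-s)|\big)\psi(s)\,d\theta=\bigO(\log|z|)$ rules out positive Laurent coefficients, so $g(z)-\log z=a_{0}+\bigO(z^{-1})$ near $\infty$; letting $z\to+\infty$ along $\mathbb{R}_{>0}$, where the normalization $\log_{s}(2)=\log|2|$ forces $\log_{s}(z-s)$ to coincide with the principal logarithm of $z-s$ and hence $\log_{s}(z-s)-\log z=\log(1-s/z)\to0$ uniformly in $s$, we get $a_{0}=0$. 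Thus $g(z)=\log z+\bigO(z^{-1})$.
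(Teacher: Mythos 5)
Your proposal takes a genuinely different route from the paper: the paper dispatches this lemma by citation, invoking \cite[Lemma 4.2]{BaikDeiftJohansson} for the symmetric case $\psi(e^{i\theta})=\psi(e^{-i\theta})$ and remarking that the non-symmetric case follows the same lines with $F(\pi)=\pi$ replaced by $\pi+\hat{c}$, whereas you verify everything directly from the definition \eqref{g-def}. Your self-contained argument is sound where it is carried out: analyticity via Morera/Fubini with the uniform local bounds you indicate; the expansion at infinity, where the observation that \eqref{g+ - g- 1} makes $g(z)-\log z$ analytic in $\{|z|>1\}$, combined with the $\bigO(\log|z|)$ bound and the normalization $\log_{s}(2)=\log 2$ along $\mathbb{R}_{>0}$ (so that $\log_{s}(z-s)-\log z\to 0$ uniformly in $s$), cleanly kills the constant term; and the two difference relations \eqref{g+ - g- 2}, \eqref{g+ - g- 1}, since for $z=e^{i\alpha}$ only the branches with $\theta\geq\alpha$ have their cut through $z$ and each contributes a jump of $2\pi i$, the total mass $\int_{\mathbb{T}}\psi(s)\frac{ds}{is}=1$ giving the stated right-hand sides. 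What each approach buys is clear: the paper's citation is short but leaves the reader to consult \cite{BaikDeiftJohansson} and adapt it; your computation is longer but makes the origin of the constant $\hat{c}$ transparent.

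The one place where you stop short of a proof is \eqref{g+ + g-}. The real part is indeed immediate, since $\re\log_{s}(z-s)=\log|z-s|$ has no jump, so both boundary values contribute $\int_{\mathbb{T}}\log|z-s|\psi(s)\frac{ds}{is}$; but the imaginary part is precisely the content of the constant $i(\pi+\hat{c}+\arg z)$, and your write-up only asserts that the branch integers in $\arg_{s}(z-s)=\tfrac{\arg z+\theta}{2}\pm\tfrac{\pi}{2}+2\pi k_{\pm}(s)$ "assemble" correctly. Since these integers depend on $s$ and on the side of $\mathbb{T}$, and they are integrated against $\psi$, they must actually be determined; note they cannot be fixed by a soft continuity-in-$\psi$ argument, because the discrepancy would be $2\pi\int m(s)\psi(s)\,d\theta$ with $m$ integer-valued but a priori $s$-dependent. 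A painless way to finish within your framework: having proved \eqref{g+ - g- 2}, the identity \eqref{g+ + g-} is equivalent to an explicit formula for $g_{-}$ alone, and the exterior boundary value is branch-unambiguous — for every $s$ the region $\{|z|>1\}\setminus(-\infty,-1]$ is simply connected, avoids the cut of $z\mapsto\log_{s}(z-s)$, and contains the normalization regime $z\to+\infty$ where $\arg_{s}(z-s)\to 0$, so $k_{-}(s)$ is computed by a single deformation; the interior value then follows branchwise from the $2\pi i$ jumps already established. With that accounting supplied (or checked against the uniform case $\psi\equiv\frac{1}{2\pi}$, where $g\equiv i\pi$ inside and $g=\log z$ outside), your proof is complete.
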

\begin{proof}
In the case where the equilibrium measure satisfies the symmetry $\psi(e^{i\theta})=\psi(e^{-i\theta})$, we have $\hat{c}=0$ and in this case \eqref{g+ + g-}--\eqref{g+ - g- 1} follow from \cite[Lemma 4.2]{BaikDeiftJohansson}. In the more general setting of a non-symmetric equilibrium measure, \eqref{g+ + g-}--\eqref{g+ - g- 1} can be proved along the same lines as \cite[proof of Lemma 4.2]{BaikDeiftJohansson} (the main difference is that $F(\pi)=\pi$ in \cite[proof of Lemma 4.2]{BaikDeiftJohansson} should here be replaced by $F(\pi)=\pi+\hat{c}$).
\end{proof}

It follows from \eqref{g+ - g- 2} that
\begin{align}\label{uni: gprime}
    g'_+(z)-g'_-(z)=-\frac{2\pi}{z}\psi(z), ~~~ z\in \mathbb{T}.
\end{align}
Substituting \eqref{g+ + g-} into the Euler-Lagrange equality \eqref{var equality} and recalling that $d\mu_V(s) = \psi(s) \frac{ds}{is}$, we get
\begin{align}\label{EL= in terms of g}
	V(z) = g_{+}(z) + g_{-}(z) + \ell - \log z - i\big(\pi+ \hat{c} \big), \qquad z \in \mathbb{T},
\end{align}
where the principal branch is taken for the logarithm. Consider the function 
\begin{equation}\label{integral form of xi}
	\xi(z) = \begin{cases}
	\ds -i \pi \int_{-1}^{z} \psi(s) \frac{ds}{is}, & \mbox{if } |z|<1, \; z \in U, \\
	\ds i \pi \int_{-1}^{z} \psi(s) \frac{ds}{is}, & \mbox{if } |z|>1, \; z \in U,
	\end{cases}
\end{equation}
where the contour of integration (except for the starting point $-1$) lies in $U \setminus \big((-\infty,0]\cup \mathbb{T}\big)$ and the first part of the contour lies in $\{z : \im z \geq 0\}$. Since $\psi$ is real-valued on $\mathbb{T}$, we have $\re \xi(z)=0$ for $z \in \mathbb{T}$. Using the Cauchy-Riemann equations in polar coordinates and the compactness of the unit circle, we verify that there exists an open annulus $U' \subseteq U$ containing $\mathbb{T}$ such that $\re \xi(z) > 0$ for $z \in U'\setminus \mathbb{T}$. Redefining $U$ if necessary, we can (and do) assume that $U'=U$. Furthermore, for $z = e^{i\theta} \in \mathbb{T}$, $\theta \in (-\pi,\pi)$, we have
\begin{align}
	& \xi_{+}(z)-\xi_{-}(z) = 2\xi_{+}(z) = -2  \pi i \int_{-1}^{z} \psi(s) \frac{ds}{is} = 2  \pi i \int_{\theta}^{\pi} \psi(e^{i\theta'}) d\theta' = g_{+}(z)-g_{-}(z), \label{prop of xi} \\
	& 2\xi_{\pm}(z) - 2g_{\pm}(z) =  \ell - V(z)-\log z-i\pi -i \hat{c}. \label{xi +}
\end{align}
Analytically continuing $\xi(z)-g(z)$ in \eqref{xi +}, we obtain
\begin{equation}\label{relation between g and xi}
	\xi(z) = g(z) + \frac{1}{2}\left(\ell - V(z)-\log z-i\pi -i\hat{c}\right), \quad \mbox{for all } z \in U\setminus \big( (-\infty,0]\cup \mathbb{T} \big).
\end{equation}
Note also that 
\begin{align}
& \xi_{+}(x) - \xi_{-}(x) = \pi i, & & x \in U\cap (-\infty,-1), \label{jump xi neg 1} \\
& \xi_{+}(x) - \xi_{-}(x) = -\pi i, & & x \in U\cap (-1,0), \label{jump xi neg 2}
\end{align}
where $\xi_{\pm}(x) := \lim_{\epsilon \to 0^{+}}\xi(z\pm i\epsilon)$ for $x \in U\cap((-\infty,-1)\cup (-1,0))$.

\subsection{Transformations $Y\rightarrow T\rightarrow S$}
The first transformation $Y\rightarrow T$ is defined by
\begin{equation}\label{def of T}
T(z) = e^{-\frac{n(\pi+\hat{c}) i}{2}\sigma_{3}}e^{\frac{n\ell}{2}\sigma_3}Y(z)e^{-ng(z)\sigma_3}e^{-\frac{n\ell}{2}\sigma_3}e^{\frac{n(\pi+\hat{c}) i}{2}\sigma_{3}}.
\end{equation}
For $z \in \mathbb{T} \setminus \{t_{0},\dots,t_{m}\}$, the function $T$ satisfies the jump relation $T_+=T_-J_T$ where the jump matrix $J_T$ is given by
\begin{align*}
	J_T(z) = \begin{pmatrix}
    e^{-n(g_{+}(z)-g_{-}(z))} & z^{-n}e^{-n[V(z)-g_{+}(z)-g_{-}(z)-\ell+i\pi+i\hat{c}]}e^{W(z)}\omega(z) \\
	0 & e^{n(g_{+}(z)-g_{-}(z))}
	\end{pmatrix}.
\end{align*}
Combining the above with \eqref{g+ - g- 1}, \eqref{EL= in terms of g} and \eqref{prop of xi}, we conclude that $T$ satisfies the following RH problem. 
\subsubsection*{RH problem for $T$}
\begin{itemize}
\item[(a)] $T: \C \setminus \mathbb{T} \rightarrow \C^{2\times2}$ is analytic.
\item[(b)] The boundary values $T_{+}$ and $T_{-}$ are continuous on $\mathbb{T}\setminus \{t_{0},\dots,t_{m}\}$ and are related by
\begin{align*}
T_+(z) = T_-(z)\begin{pmatrix}
e^{-2n\xi_{+}(z)} & e^{W(z)} \omega(z) \\ 0 & e^{-2n\xi_{-}(z)} 
\end{pmatrix}, \qquad z \in \mathbb{T} \setminus \{t_{0},\dots,t_{m}\}.
\end{align*}
\item[(c)] As $z \to \infty$, $T(z) = I + \bigO(z^{-1})$.
\item[(d)] As $z \rightarrow t_k$, $k = 0, \dots, m$, $z \in \C \setminus \mathbb{T}$, 
\begin{align*}
T(z) = \begin{cases}
\begin{pmatrix}
\bigO(1) & \bigO(1) + \bigO({|z-t_k|}^{\alpha_{k}}) \\ \bigO(1) & \bigO(1) + \bigO({|z-t_k|}^{\alpha_{k}})
\end{pmatrix}, &\mbox{if } \re \alpha_k \ne 0, \\
\begin{pmatrix}
\bigO(1) & \bigO(\log{|z-t_k|}) \\ \bigO(1) & \bigO(\log{|z-t_k|})
\end{pmatrix}, &\mbox{if } \re \alpha_k = 0.
\end{cases}
\end{align*}
\end{itemize}
The jumps of $T$ for $z \in \mathbb{T}\setminus\{t_{0},\dots,t_{m}\}$ can be factorized as
\begin{multline*}
\begin{pmatrix}
e^{-2n\xi_{+}(z)} & e^{W(z)} \omega(z) \\
0 & e^{-2n\xi_{-}(z)}
\end{pmatrix} = \begin{pmatrix}
1 & 0 \\ e^{-W(z)}\omega(z)^{-1}e^{-2n \xi_{-}(z)} & 1
\end{pmatrix} \\ \times \begin{pmatrix}
0 & e^{W(z)}\omega(z) \\ -e^{-W(z)}\omega(z)^{-1} & 0
\end{pmatrix} \begin{pmatrix}
1 & 0 \\ e^{-W(z)}\omega(z)^{-1}e^{-2n \xi_{+}(z)} & 1
\end{pmatrix}.
\end{multline*}
Before proceeding to the second transformation, we first describe the analytic continuations of the functions appearing in the above factorization. The functions $\omega_{\beta_{k}}$, $k =0,\dots,m$, have a straightforward analytic continuation from $\mathbb{T} \setminus \{t_{k}\}$ to $\mathbb{C}\setminus \{\lambda t_{k}: \lambda \geq 0\}$, which is given by
\begin{align}\label{omega beta ana cont}
& \omega_{\beta_{k}}(z) = z^{\beta_{k}}t_{k}^{-\beta_{k}}\times \begin{cases}
e^{i \pi \beta_{k}}, & 0 \leq \arg_{0} z < \theta_{k}, \\
e^{-i \pi \beta_{k}}, & \theta_{k} \leq \arg_{0} z < 2 \pi,
\end{cases}  & & z \in \mathbb{C}\setminus \{\lambda t_{k}: \lambda \geq 0\}, \; k =0,\dots,m,
\end{align}
where $\arg_{0} z \in [0,2\pi)$, $t_{k}^{-\beta_{k}} := e^{-i\beta_{k} \theta_{k}}$, and $z^{\beta_{k}} := |z|^{\beta_{k}}e^{i\beta_{k} \arg_{0} z}$. For the root-type singularities, we follow \cite{DIK2011} and analytically continue $\omega_{\alpha_{k}}$ from $\mathbb{T}\setminus \{t_{k}\}$ to $\mathbb{C}\setminus \{\lambda t_{k}: \lambda \geq 0\}$ as follows
\begin{align*}
	& \omega_{\alpha_{k}}(z) = \frac{(z-t_{k})^{\alpha_{k}}}{(z t_{k} e^{i \ell_{k}(z)})^{\alpha_{k}/2}} := \frac{e^{\alpha_{k}(\log |z-t_{k}|+i \hat{\arg}_{k}(z-t_{k}))}}{e^{\frac{\alpha_{k}}{2}(\log |z| + i \arg_{0}(z) + i \theta_{k} + i \ell_{k}(z))}}, \quad z \in \mathbb{C}\setminus \{\lambda t_{k}: \lambda \geq 0\}, \; k=0,\dots,m,
\end{align*}
where $\hat{\arg}_{k} z \in (\theta_{k},\theta_{k}+2\pi)$, and
\begin{align*}
\ell_{k}(z) = \begin{cases}
3 \pi, & 0 \leq \arg_{0} z < \theta_{k}, \\
\pi, & \theta_{k} \leq \arg_{0} z < 2 \pi.
\end{cases}
\end{align*}
Now, we open lenses around $\mathbb{T}\setminus \{t_{0},\dots,t_{m}\}$ as shown in Figure \ref{fig: opening of the lenses}. The part of the lens-shaped contour lying in $\{|z|<1\}$ is denoted $\gamma_{+}$, and the part lying in $\{|z|>1\}$ is denoted $\gamma_{-}$. We require that $\gamma_{+},\gamma_{-} \subset U$.

	\begin{figure}
\vspace{-0.5cm}
		\begin{center}
			\begin{tikzpicture}
			\node at (0,0) {};
			
			\draw[fill] (0:2.5) circle (0.075cm); 
			\draw[fill] (80:2.5) circle (0.075cm); 
			\draw[fill] (230:2.5) circle (0.075cm); 
			
			\draw[black,line width=0.5 mm,->-=0.33,->-=0.62,->-=0.94] ([shift=(-180:2.5cm)]0,0) arc (-180:180:2.5cm);

			\draw[black, line width=0.5 mm,->-=0.12,->-=0.445,->-=0.83] (0:2.5) to [out=45, in=40-90] (40:3) to [out=40+90, in=35] (80:2.5) to [out=125, in=-90+155] (155:3.1) to [out=155+90, in=90+40+45] (230:2.5) to [out=-50-35, in=300-90] (300:3) to [out=300+90, in=-45] (0:2.5);

			\draw[black, line width=0.5 mm,->-=0.12,->-=0.445,->-=0.83] (0:2.5) to [out=45+90, in=35-90] (80:2.5) to [out=170+45, in=-90+152] (155:1.9) to [out=152+90, in=90+50+45-90] (230:2.5) to [out=-40-45+90, in=290-90] (290:2) to [out=290+90, in=-45-90] (00:2.5);
			
%
			
			\node at (0:2.85) {$t_{0}$};
			\node at (80:2.8) {$t_{1}$};
			\node at (230:2.8) {$t_{2}$};

			\node at (33:1.55) {$\gamma_+$};
			\node at (152:1.52) {$\gamma_+$};
			\node at (295:1.7) {$\gamma_+$};

			\node at (36:3.34) {$\gamma_-$};
			\node at (155:3.42) {$\gamma_-$};
			\node at (298:3.34) {$\gamma_-$};
			\end{tikzpicture}
		\end{center}
		\caption{\label{fig: opening of the lenses}
			The jump contour for $S$ with $m=2$.}
	\end{figure}
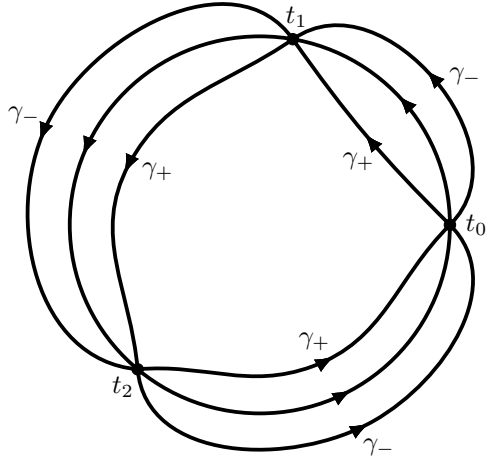
The transformation $T \mapsto S$ is defined by
\begin{align}\label{T to S}
	S(z) = T(z) \times \begin{cases}
	I, & \mbox{if } z \mbox{ is outside the lenses,} \\
	\begin{pmatrix}
	1 & 0 \\ e^{-W(z)}\omega(z)^{-1}e^{-2n\xi(z)}& 1 
	\end{pmatrix}, & \mbox{if } |z| > 1 \mbox{ and inside the lenses,}
	\\
	\begin{pmatrix}
	1 & 0 \\ -e^{-W(z)}\omega(z)^{-1}e^{-2n\xi(z)}& 1 
	\end{pmatrix}, & \mbox{if } |z| < 1 \mbox{ and inside the lenses.}
	\end{cases}
\end{align}
Note from \eqref{jump xi neg 1}--\eqref{jump xi neg 2} that $e^{-2n\xi(z)}$ is analytic in $U \cap ((-\infty,-1)\cup(-1,0))$. It can be verified using the RH problem for $T$ and \eqref{T to S} that $S$ satisfies the following RH problem.
\subsubsection*{RH problem for $S$}
\begin{itemize}
\item[(a)] $S : \C \setminus (\gamma_{+} \cup \gamma_{-} \cup \mathbb{T}) \to \mathbb{C}^{2 \times 2}$ is analytic, where $\gamma_+, \gamma_-$ are the contours in Figure \ref{fig: opening of the lenses} lying inside and outside $\mathbb{T}$, respectively.
\item[(b)] The jumps for $S$ are as follows. 
\begin{align*}
& S_+(z) = S_-(z)\begin{pmatrix}
0 & e^{W(z)}\omega(z) \\ -e^{-W(z)}\omega(z)^{-1} & 0
\end{pmatrix}, && z \in \mathbb{T}\setminus \{t_{0},\dots,t_{m}\},
\\
& S_+(z) = S_-(z)\begin{pmatrix}
1 & 0 \\ e^{-W(z)}\omega(z)^{-1}e^{-2n\xi(z)} & 1
\end{pmatrix}, && z \in \gamma_{+}\cup \gamma_{-}.
\end{align*}
\item[(c)] As $z \rightarrow \infty$, $S(z) = I + \bigO(z^{-1})$.
\item[(d)] As $z \to t_{k}$, $k = 0,\dots,m$, we have
\begin{equation*}
\begin{array}{l l}
\displaystyle S(z) = \left\{ \begin{array}{l l}
\begin{pmatrix}
\bigO(1) & \bigO(\log (z-t_{k})) \\
\bigO(1) & \bigO(\log (z-t_{k}))
\end{pmatrix}, & \mbox{if } z \mbox{ is outside the lenses}, \\
\begin{pmatrix}
\bigO(\log (z-t_{k})) & \bigO(\log (z-t_{k})) \\
\bigO(\log (z-t_{k})) & \bigO(\log (z-t_{k}))
\end{pmatrix}, & \mbox{if } z \mbox{ is inside the lenses},
\end{array} \right. & \displaystyle \mbox{ if } \re \alpha_{k} = 0, \\[0.9cm]
		
\displaystyle S(z) = \left\{ \begin{array}{l l}
\begin{pmatrix}
\bigO(1) & \bigO(1) \\
\bigO(1) & \bigO(1)
\end{pmatrix}, & \mbox{if } z \mbox{ is outside the lenses}, \\
\begin{pmatrix}
\bigO((z-t_{k})^{-\alpha_{k}}) & \bigO(1) \\
\bigO((z-t_{k})^{-\alpha_{k}}) & \bigO(1)
\end{pmatrix}, & \mbox{if } z \mbox{ is inside the lenses},
\end{array} \right. & \displaystyle \mbox{ if } \re \alpha_{k} > 0, \\[0.9cm]
		
\displaystyle S(z) = \begin{pmatrix}
\bigO(1) & \bigO((z-t_{k})^{\alpha_{k}}) \\
\bigO(1) & \bigO((z-t_{k})^{\alpha_{k}}) 
\end{pmatrix}, & \displaystyle \mbox{ if } \re \alpha_{k} < 0.
\end{array}
\end{equation*}
\end{itemize}
Since $\gamma_{+},\gamma_{-} \subset U$ and $\re \xi(z) > 0$ for $z \in U\setminus \mathbb{T}$ (recall the discussion below \eqref{integral form of xi}), the jump matrices $S_{-}(z)^{-1}S_{+}(z)$ on $\gamma_{+}\cup \gamma_{-}$ are exponentially close to $I$ as $n \to + \infty$, and this convergence is uniform outside fixed neighborhoods of $t_{0},\dots,t_{m}$.

Our next task is to find suitable approximations (called ``parametrices") for $S$ in different regions of the complex plane. 

\subsection{Global parametrix $P^{(\infty)}$}

In this subsection, we will construct a global parametrix $P^{(\infty)}$ that is defined as the solution to the following RH problem. We will show in Subsection \ref{subsection:small norm} below that $P^{(\infty)}$ is a good approximation of $S$ outside fixed neighborhoods of $t_{0},\dots,t_{m}$.

\subsubsection*{RH problem for $P^{(\infty)}$}
\begin{itemize}
\item[(a)] $P^{(\infty)}:\mathbb{C}\setminus \mathbb{T} \to \mathbb{C}^{2\times 2}$ is analytic.

\item[(b)] The jumps are given by
\begin{align}\label{jumps of Pinf}
P^{(\infty)}_+(z) = P^{(\infty)}_-(z) \begin{pmatrix}
0 & e^{W(z)}\omega(z) \\ -e^{-W(z)}\omega(z)^{-1} & 0
\end{pmatrix}, & & z \in \mathbb{T}\setminus \{t_{0},\dots,t_{m}\}.
\end{align}

\item[(c)] As $z \to \infty$, we have $P^{(\infty)}(z) = I + \bigO(z^{-1})$. 

\item[(d)] As $z \to t_{k}$ from $|z| \lessgtr 1$, $k\in\{0,\dots,m\}$, we have $P^{(\infty)}(z) = \bigO(1)(z-t_{k})^{-(\frac{\alpha_{k}}{2}\pm \beta_{k})\sigma_{3}}$.
\end{itemize}
The unique solution to the above RH problem is given by
\begin{align}\label{P-infty-sol}
	P^{(\infty)}(z) = \begin{cases}
	D(z)^{\sigma_3}\begin{pmatrix} 0 & 1 \\ -1 & 0
	\end{pmatrix}, & \mbox{ if }  |z| < 1, \\
	D(z)^{\sigma_3}, & \mbox{ if }  |z| > 1,
	\end{cases}
\end{align}
where $D(z)$ is the Szeg\H{o} function defined by
\begin{align}
	& D(z) = D_{W}(z) \prod_{k=0}^{m}D_{\alpha_{k}}(z)D_{\beta_{k}}(z), & & D_{W}(z)=\exp{\left(\frac{1}{2\pi i}\int_{\mathbb{T}} \frac{W(s)}{s-z}ds\right)}, \label{D_w_t def} \\
	& D_{\alpha_{k}}(z) = \exp{\left(\frac{1}{2\pi i}\int_{\mathbb{T}} \frac{\log \omega_{\alpha_{k}}(s)}{s-z}ds\right)}, & & D_{\beta_{k}}(z) = \exp{\left(\frac{1}{2\pi i}\int_{\mathbb{T}} \frac{\log \omega_{\beta_{k}}(s)}{s-z}ds\right)}. \label{DalphakDbetakdef}
\end{align}
The branches of the logarithms in (\ref{DalphakDbetakdef}) can be arbitrarily chosen as long as $\log \omega_{\alpha_k}(s)$ and $\log \omega_{\beta_k}(s)$ are continuous on $\mathbb{T} \setminus t_k$. The function $D$ is analytic on $\mathbb{C}\setminus \mathbb{T}$ and satisfies  the jump condition $D_+(z) = D_-(z)e^{W(z)}\omega(z)$ on $\mathbb{T} \setminus \{t_0, \dots, t_m\}$. The expressions for $D_{\alpha_{k}}$ and $D_{\beta_{k}}$ can be simplified as in \cite[eqs. (4.9)--(4.10)]{DIK2011}; we have
\begin{align}\label{DaDb simplify}
	D_{\alpha_{k}}(z) D_{\beta_{k}}(z) = \begin{cases}
	\ds \left( \frac{z-t_{k}}{t_{k}e^{i\pi}} \right)^{\frac{\alpha_{k}}{2}+\beta_{k}} = \frac{e^{(\frac{\alpha_{k}}{2}+\beta_{k})(\log |z-t_{k}| + i \hat{\arg}_{k}(z-t_{k}))}}{e^{(\frac{\alpha_{k}}{2}+\beta_{k})(i\theta_{k}+i\pi)}}, & \mbox{if } |z|<1, \\
	\ds \left( \frac{z-t_{k}}{z} \right)^{-\frac{\alpha_{k}}{2}+\beta_{k}} = \frac{e^{(\beta_{k}-\frac{\alpha_{k}}{2})(\log |z-t_{k}| + i \hat{\arg}_{k}(z-t_{k}))}}{e^{(\beta_{k}-\frac{\alpha_{k}}{2})(\log |z| + i \hat{\arg}_{k} z)}}, & \mbox{if } |z|>1,
	\end{cases}
\end{align}
where $\hat{\arg}_{k}$ was defined below \eqref{omega beta ana cont}. Using \eqref{W as a Laurent series}, we can also simplify $D_{W}$ as
\begin{align}\label{DW simplified}
D_{W}(z) = \begin{cases}
e^{W_{0}+W_{+}(z)}, & |z|<1, \\
e^{-W_{-}(z)}, & |z|>1.
\end{cases}
\end{align}

\subsection{Local parametrices $P^{(t_k)}$}

In this subsection, we build parametrices $P^{(t_k)}(z)$ in small open disks $\mathcal{D}_{t_k}$ of $t_k$, $k=0,\dots,m$. The disks $\mathcal{D}_{t_k}$ are taken sufficiently small such that $\mathcal{D}_{t_k} \subset U$ and $\mathcal{D}_{t_k} \cap \mathcal{D}_{t_j} = \emptyset$ for $j \neq k$. Since we assume that the $t_{k}$'s remain bounded away from each other, we can (and do) choose the radii of the disks to be fixed. The parametrices $P^{(t_k)}(z)$ are defined as the solution to the following RH problem. We will show in Subsection \ref{subsection:small norm} below that $P^{(t_k)}$ is a good approximation for $S$ in $\mathcal{D}_{t_k}$.
\subsubsection*{RH problem for $P^{(t_{k})}$}
\begin{itemize}
\item[(a)] $P^{(t_k)}: \mathcal{D}_{t_{k}}\setminus(\mathbb{T} \cup \gamma_{+} \cup \gamma_{-}) \to \mathbb{C}^{2\times 2}$ is analytic.
\item[(b)] For $z\in (\mathbb{T} \cup \gamma_{+} \cup \gamma_{-})\cap \mathcal{D}_{t_{k}}$, $P_{-}^{(t_k)}(z)^{-1}P_{+}^{(t_k)}(z)=S_{-}(z)^{-1}S_{+}(z)$.
\item[(c)] As $n\to+\infty$, $P^{(t_k)}(z)=(I+\bigO(n^{-1+2|\re \beta_{k}|}))P^{(\infty)}(z)$ uniformly for $z\in\partial\mathcal{D}_{t_k}$.
\item[(d)] As $z\to t_k$, $S(z)P^{(t_k)}(z)^{-1}=\bigO(1)$.
\end{itemize}
A solution to the above RH problem can be constructed using hypergeometric functions as in \cite{DIK2011, FouMarSou}. Consider the function
\begin{align*}
f_{t_{k}}(z):= 2\pi i \int_{t_{k}}^{z} \psi(s) \frac{ds}{is}, \qquad z \in \mathcal{D}_{t_{k}},
\end{align*}
where the path is a straight line segment from $t_{k}$ to $z$. This is a conformal map from $\mathcal{D}_{t_k}$ to a neighborhood of $0$, which satisfies
\begin{equation}\label{asymptotics for f in D_t}
	f_{t_{k}}(z) = 2\pi t_{k}^{-1} \psi(t_{k}) (z-t_{k}) \left( 1+\bigO(z-t_{k}) \right), \qquad \mbox{ as } z \to t_{k}.
\end{equation}
If $\mathcal{D}_{t_{k}} \cap (-\infty,0] = \emptyset$, $f_{t_{k}}$ can also be expressed as
\begin{align*}
f_{t_{k}}(z) = - 2 \times \begin{cases}
\xi(z)-\xi_{+}(t_{k}), & |z|<1, \\
-(\xi(z)-\xi_{-}(t_{k})), & |z|>1.
\end{cases}
\end{align*}
If $\mathcal{D}_{t_{k}} \cap (-\infty,0] \neq \emptyset$, then instead we have
\begin{align}
& f_{t_{k}}(z) = - 2 \times \begin{cases}
\xi(z)-\xi_{+}(t_{k}), & |z|<1, \; \im z >0, \\
\xi(z)-\xi_{+}(t_{k})-\pi i, & |z|<1, \; \im z <0, \\
-(\xi(z)-\xi_{-}(t_{k})), & |z|>1, \; \im z >0, \\
-(\xi(z)-\xi_{-}(t_{k})+\pi i), & |z|>1, \; \im z <0,
\end{cases} & & \mbox{if } \im t_{k}>0, \label{ftk in a special case} \\
& f_{t_{k}}(z) = - 2 \times \begin{cases}
\xi(z)-\xi_{+}(t_{k})+\pi i, & |z|<1, \; \im z >0, \\
\xi(z)-\xi_{+}(t_{k}), & |z|<1, \; \im z <0, \\
-(\xi(z)-\xi_{-}(t_{k})-\pi i), & |z|>1, \; \im z >0, \\
-(\xi(z)-\xi_{-}(t_{k})), & |z|>1, \; \im z <0,
\end{cases} & & \mbox{if } \im t_{k}<0. \nonumber
\end{align}
If $t_{k}=-1$, \eqref{ftk in a special case} also holds with $\xi_{\pm}(t_k) := \lim_{\epsilon \to 0^{+}}\xi_{\pm}(e^{(\pi - \epsilon)i})=0$. We define $\omega_{k}$ and $\widetilde{W}_{k}$ by
\begin{align*}
& \omega_{k}(z)= 
e^{-2\pi i \beta_{k}\hat{\theta}(z;k)}z^{\beta_{k}}t_{k}^{-\beta_{k}} \prod_{j\neq k} \omega_{\alpha_{j}}(z)\omega_{\beta_{j}}(z), & & \widetilde{W}_{k}(z) = \check{\omega}_{\alpha_{k}}(z)^{\frac{1}{2}} \times \begin{cases}
e^{- \frac{i\pi\alpha_{k}}{2}}, & z \in Q_{+,k}^{R}\cup Q_{-,k}^{L}, \\
e^{ \frac{i\pi\alpha_{k}}{2}}, & z \in Q_{-,k}^{R} \cup Q_{+,k}^{L},
\end{cases}
\end{align*}
where $\hat{\theta}(z;k)=1$ if $\im z <0$ and $k=0$ and $\hat{\theta}(z;k)=0$ otherwise, $z^{\beta_{k}} := |z|^{\beta_{k}}e^{i\beta_{k} \arg_{0} z}$,
\begin{align}\label{sqrtomegaalphak}
& \check{\omega}_{\alpha_{k}}(z)^{1/2} := \frac{(z-t_{k})^{\frac{\alpha_{k}}{2}}}{(z t_{k} e^{i \ell_{k}(z)})^{\alpha_{k}/4}} := \frac{e^{\frac{\alpha_{k}}{2}(\log |z-t_{k}|+i \check{\arg}_{k}(z-t_{k}))}}{e^{\frac{\alpha_{k}}{4}(\log |z| + i \arg_{0}(z) + i \theta_{k} + i \ell_{k}(z))}}, 
\end{align}
and (see Figure \ref{fig: four quadrants})
\begin{align*}
	Q_{\pm,k}^{R}&= \{ z \in \mathcal{D}_{t_{k}}: \mp \re f_{t_{k}}(z) > 0 \mbox{, } \im f_{t_{k}}(z) >0 \}, \\
	Q_{\pm,k}^{L}&= \{ z \in \mathcal{D}_{t_{k}}: \mp \re f_{t_{k}}(z) > 0 \mbox{, } \im f_{t_{k}}(z) <0 \}.
\end{align*}
The argument $\check{\arg}_{k}(z-t_{k})$ in \eqref{sqrtomegaalphak} is defined to have a discontinuity for $z \in (\overline{Q_{-,k}^{L}} \cap \overline{Q_{-,k}^{R}}) \cup [z_{\star,k},t_{k}\infty)$, $z_{\star,k}:=\overline{Q_{-,k}^{L}} \cap \overline{Q_{-,k}^{R}}\cap \partial \mathcal{D}_{t_{k}}$, and such that $\check{\arg}_{k}((1-0_{+})t_{k}-t_{k})=\theta_{k}+\pi$. Note that $\check{\arg}_{k}(z-t_{k})$ is merely a small deformation of the argument $\hat{\arg}_{k}(z-t_{k})$ defined below \eqref{omega beta ana cont}. This small deformation is needed to ensure that $E_{t_{k}}$ in \eqref{E in D_t} below is analytic in $\mathcal{D}_{t_{k}}$.

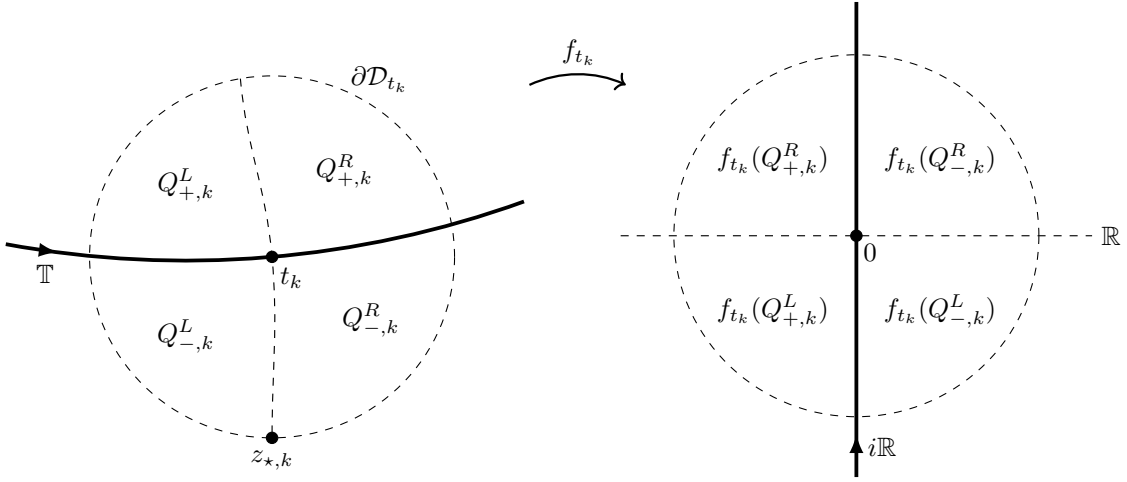
\begin{figure}
\begin{center}
\begin{tikzpicture}[scale=1]
\draw[line width=0.5 mm,->-=0.1] ([shift=(-100.5:13cm)]0,0) arc (-100.5:-70:13cm);
\node at (-84:13.3) {$t_{k}$};
\draw[fill] (-85:13) circle (0.075);

\draw[dashed] (-85:13) circle (2.4);

\draw[black, dashed, line width=0.15 mm] ($(-85:13)+(180-85+5:2.4)$) to [out=95-180, in=180-85] (-85:13) to [out=-85, in=180-85-5] ($(-85:13)+(-85-5:2.4)$);

\node at (-80:12) {$Q_{+,k}^{R}$};
\node at (-90:12) {$Q_{+,k}^{L}$};
\node at (-90:14) {$Q_{-,k}^{L}$};
\node at (-80:14) {$Q_{-,k}^{R}$};

\node at (-98:13.3) {$\mathbb{T}$};
\node at (-76.5:10.9) {$\partial\mathcal{D}_{t_k}$};

\draw[fill] ($(-85:13)+(-85-5:2.4)$) circle (0.075);
\node at ($(-85:13)+(-85-5:2.7)$) {$z_{\star,k}$};
\end{tikzpicture}
\hspace{-0.2cm}
\raisebox{0cm}{\begin{tikzpicture}[scale=1]
\draw[line width=0.3 mm,->=1] (-4.3,2) arc
    [
        start angle=115,
        end angle=65,
        x radius=1.5cm,
        y radius =1.5cm
    ] ;

\draw[black,line width=0.5 mm,->-=0.09] (0,-3.2)--(0,3.1);
\draw[black,dashed,line width=0.15 mm] (-3.1,0)--(3.1,0);

\draw[fill] (0:0) circle (0.075);

\draw[dashed] (0:0) circle (2.4);

\node at (-3.65,2.43) {$f_{t_k}$};
\node at (0.18,-0.22) {$0$};
\node at (3.35,0) {$\mathbb{R}$};
\node at (0.34,-2.8) {$i\mathbb{R}$};

\node at (-1.1,1) {$f_{t_k}(Q_{+,k}^{R})$};
\node at (-1.1,-1) {$f_{t_k}(Q_{+,k}^{L})$};
\node at (1.1,-1) {$f_{t_k}(Q_{-,k}^{L})$};
\node at (1.1,1) {$f_{t_k}(Q_{-,k}^{R})$};

\end{tikzpicture}}
\end{center}
\caption{\label{fig: four quadrants} The four quadrants $Q_{\pm,k}^{R}$, $Q_{\pm,k}^{L}$ near $t_k$ and their images under the map $f_{t_k}$. }
\end{figure}

Note that $\omega_{k}$ is analytic in $\mathcal{D}_{t_{k}}$. We now use the confluent hypergeometric model RH problem, whose solution is denoted $\Phi_{\mathrm{HG}}(z;\alpha_{k},\beta_{k})$ (see Appendix \ref{appendix:hypergeometric RHP} for the definition and properties of $\Phi_{\mathrm{HG}}$). If $k \neq 0$ and $\mathcal{D}_{t_{k}} \cap (-\infty,0] = \emptyset$, we define
\begin{equation}\label{def of local param}
P^{(t_{k})}(z) = E_{t_{k}}(z)\Phi_{\mathrm{HG}}(nf_{t_{k}}(z);\alpha_{k},\beta_{k})\widetilde{W}_{k}(z)^{-\sigma_{3}}e^{-n\xi(z)\sigma_{3}}e^{-\frac{W(z)}{2}\sigma_{3}}\omega_{k}(z)^{-\frac{\sigma_{3}}{2}},
\end{equation}
where $E_{t_{k}}$ is given by 
\begin{equation}\label{E in D_t}
	E_{t_{k}}(z) = P^{(\infty)}(z) \omega_{k}(z)^{\frac{\sigma_{3}}{2}}e^{\frac{W(z)}{2}\sigma_{3}} \widetilde{W}_{k}(z)^{\sigma_{3}}\hspace{-0.08cm} \left\{ \hspace{-0.18cm} \begin{array}{l l}
	e^{ \frac{i\pi\alpha_{k}}{4}\sigma_{3}}e^{-i\pi\beta_{k} \sigma_{3}}, \hspace{-0.2cm} & z \in Q_{+,k}^{R} \\
	e^{-\frac{i\pi\alpha_{k}}{4}\sigma_{3}}e^{-i\pi\beta_{k}\sigma_{3}}, \hspace{-0.2cm} & z \in Q_{+,k}^{L} \\
	e^{\frac{i\pi\alpha_{k}}{4}\sigma_{3}}\begin{pmatrix}
	0 & 1 \\ -1 & 0
	\end{pmatrix} , \hspace{-0.2cm} & z \in Q_{-,k}^{L} \\
	e^{-\frac{i\pi\alpha_{k}}{4}\sigma_{3}}\begin{pmatrix}
	0 & 1 \\ -1 & 0
	\end{pmatrix} , \hspace{-0.2cm} & z \in Q_{-,k}^{R} \\
	\end{array} \hspace{-0.2cm} \right\} \hspace{-0.08cm} e^{n\xi_{+}(t_{k})\sigma_{3}} (nf_{t_{k}}(z))^{\beta_{k}\sigma_{3}}.
\end{equation} 
Here the branch of $f_{t_k}(z)^{\beta_k}$ is such that $f_{t_k}(z)^{\beta_k} = |f_{t_{k}}(z)|^{\beta_k} e^{\beta_k i\arg f_{t_{k}}(z)}$ with $\arg f_{t_k}(z) \in (-\frac{\pi}{2}, \frac{3\pi}{2})$, and the branch for the square root of $\omega_{k}(z)$ can be chosen arbitrarily as long as $\omega_{k}(z)^{1/2}$ is analytic in $\mathcal{D}_{t_{k}}$ (note that $P^{(t_{k})}(z)$ is invariant under a sign change of $\omega_{k}(z)^{1/2}$). If $k \neq 0$, $\mathcal{D}_{t_{k}} \cap (-\infty,0] \neq \emptyset$ and $\im t_{k} \geq 0$ (resp. $\im t_{k} < 0$), then we define $P^{(t_{k})}(z)$ as in \eqref{def of local param} but with $\xi(z)$ replaced by $\xi(z)+\pi i \theta_{-}(z)$ (resp. $\xi(z)+\pi i \theta_{+}(z)$), where 
\begin{align*}
\theta_{-}(z):= \begin{cases}
1, & \mbox{if } \im z <0, \; |z|>1, \\
-1, & \mbox{if } \im z <0, \; |z|<1, \\
0, & \mbox{otherwise,}
\end{cases} \qquad \theta_{+}(z):= \begin{cases}
-1, & \mbox{if } \im z >0, \; |z|>1, \\
1, & \mbox{if } \im z >0, \; |z|<1, \\
0, & \mbox{otherwise.}
\end{cases}
\end{align*}
Using the definition of $\widetilde{W}_{k}$ and the jumps \eqref{jumps of Pinf} of $P^{(\infty)}$, we verify that $E_{t_{k}}$ has no jumps in $\mathcal{D}_{t_{k}}$. Moreover, since $P^{(\infty)}(z) = \bigO(1)(z-t_{k})^{-(\frac{\alpha_{k}}{2}\pm \beta_{k})\sigma_{3}}$ as $z\to t_{k}$, $\pm (1-|z|) > 0$, we infer from \eqref{E in D_t} that $E_{t_{k}}(z) = \bigO(1)$ as $z \to t_{k}$, and therefore $E_{t_{k}}$ is analytic in $\mathcal{D}_{t_{k}}$. Using \eqref{E in D_t}, we see that $E_{t_{k}}(z) = \bigO(1)n^{\beta_{k}\sigma_{3}}$ as  $n \to \infty$, uniformly for $z \in \mathcal{D}_{t_{k}}$.  Since $P^{(t_{k})}$ and $S$ have the same jumps on $(\mathbb{T}\cup \gamma_{+}\cup \gamma_{-})\cap \mathcal{D}_{t_{k}}$, $S(z)P^{(t_{k})}(z)^{-1}$ is analytic in $\mathcal{D}_{t_{k}} \setminus \{t_{k}\}$. Furthermore, by \eqref{lol 35} and condition (d) in the RH problem for $S$, as $z \to t_{k}$ from outside the lenses we have that $S(z)P^{(t_{k})}(z)^{-1}$ is $\bigO(\log(z-t_{k}))$ if $\re \alpha_{k} = 0$, is $\bigO(1)$ if $\re \alpha_{k} > 0$, and is $\bigO((z-t_{k})^{\alpha_{k}})$ if $\re \alpha_{k} < 0$. In all cases, the singularity of $S(z)P^{(t_{k})}(z)^{-1}$ at $z = t_{k}$ is removable and therefore $P^{(t_{k})}$ in \eqref{def of local param} satisfies condition (d) of the RH problem for $P^{(t_{k})}$.
	
The value of $E_{t_{k}}(t_{k})$ can be obtained by taking the limit $z \to t_{k}$ in \eqref{E in D_t} (for example from the quadrant $Q_{+,k}^{R}$). Using \eqref{P-infty-sol}, \eqref{D_w_t def}, \eqref{DaDb simplify}, \eqref{asymptotics for f in D_t} and \eqref{E in D_t}, we obtain 
\begin{align}\label{Etk at tk}
E_{t_{k}}(t_{k}) = \begin{pmatrix}
0 & 1 \\
-1 & 0
\end{pmatrix} \Lambda_{k}^{\sigma_{3}},
\end{align}
where
\begin{equation}\label{def Lambda}
	\Lambda_{k} = e^{\frac{W(t_{k})}{2}}D_{W,+}(t_{k})^{-1}\Bigg[\prod_{j \neq k}D_{\alpha_{j},+}(t_{k})^{-1}D_{\beta_{j},+}(t_{k})^{-1} \omega_{\alpha_{j}}^{\frac{1}{2}}(t_{k})\omega_{\beta_{j}}^{\frac{1}{2}}(t_{k})\Bigg](2\pi \psi(t_{k})n)^{\beta_{k}}e^{n \xi_{+}(t_{k})}.
\end{equation}
In (\ref{def Lambda}), the branch of $\omega_{\alpha_{j}}^{\frac{1}{2}}(t_{k})$ is as in (\ref{sqrtomegaalphak}) and $\omega_{\beta_{j}}^{\frac{1}{2}}(t_{k})$ is defined by
\begin{align*}
	& \omega_{\beta_{j}}^{\frac{1}{2}}(t_k) := e^{i\frac{\beta_j}{2}(\theta_{k} - \theta_{j})} \times \begin{cases}
e^{\frac{i \pi}{2} \beta_{j}}, & \mbox{if } 0 \leq \theta_{k} < \theta_{j}, \\
e^{-\frac{i \pi}{2} \beta_{j}}, & \mbox{if } \theta_{j} \leq \theta_{k} < 2 \pi.
\end{cases} 
\end{align*}
The expression for $\Lambda_k$ can be further simplified as follows. A simple computation shows that
\begin{align*}
D_{\alpha_{j},+}(z) & = |z-t_{j}|^{\frac{\alpha_{j}}{2}}\exp \left( \frac{i \alpha_{j}}{2}\left[ \hat{\arg}_{j}(z-t_{j}) - \theta_{j} - \pi \right] \right) 
	\\	
& = |z-t_{j}|^{\frac{\alpha_{j}}{2}}\exp \left( \frac{i \alpha_{j}}{2}\left[ \frac{\ell_{j}(z)}{2}+ \frac{\arg_{0} z - \theta_{j}}{2} - \pi \right] \right), 
	\\
D_{\beta_{j},+}(z) & = |z-t_{j}|^{\beta_{j}}\exp \left( i \beta_{j} \left[ \hat{\arg}_{j}(z-t_{j}) - \theta_{j} - \pi \right] \right) 
	\\
& = |z-t_{j}|^{\beta_{j}}\exp \left( i \beta_{j}\left[ \frac{\ell_{j}(z)}{2}+ \frac{\arg_{0} z - \theta_{j}}{2} - \pi \right] \right)
\end{align*}
for $z \in \mathbb{T}$. Therefore, the product in brackets in \eqref{def Lambda} can be rewritten as 
\begin{align*}
	\prod_{j \neq k} |t_{k}-t_{j}|^{-\beta_{j}}\exp \left( - \frac{i\alpha_{j}}{2} \frac{\theta_{k} - \theta_{j}}{2}  \right)\prod_{j=0}^{k-1} \exp \left( \frac{\pi i \alpha_{j}}{4} \right)  \prod_{j=k+1}^{m} \exp \left( -\frac{\pi i \alpha_{j}}{4} \right),
\end{align*}
and thus
\begin{align*}
	\Lambda_{k} = e^{\frac{W(t_{k})}{2}}D_{W,+}(t_{k})^{-1}e^{\frac{i \lambda_{k}}{2}}(2\pi \psi(t_{k})n)^{\beta_{k}}\prod_{j \neq k} |t_{k}-t_{j}|^{-\beta_{j}},
\end{align*}
where
\begin{equation}
\lambda_{k} = \sum_{j=0}^{k-1} \frac{\pi \alpha_{j}}{2} - \sum_{j=k+1}^{m} \frac{\pi \alpha_{j}}{2} - \sum_{j \neq k} \frac{\alpha_{j}(\theta_{k}-\theta_{j})}{2} + 2\pi n \int_{t_{k}}^{-1} \psi(s)\frac{ds}{is}.
\end{equation}
Using \eqref{def of local param} and \eqref{Asymptotics HG}, we obtain 
\begin{equation}\label{asymptotics on the disk D_t}
P^{(t_{k})}(z)P^{(\infty)}(z)^{-1} = I + \frac{\beta_{k}^{2}-\frac{\alpha_{k}^{2}}{4}}{n f_{t_{k}}(z)} E_{t_{k}}(z) \begin{pmatrix}
-1 & \tau(\alpha_{k},\beta_{k}) \\ - \tau(\alpha_{k},-\beta_{k}) & 1
\end{pmatrix}E_{t_{k}}(z)^{-1} + \bigO (n^{-2+2|\re \beta_{k}|}),
\end{equation}
as $n \to \infty$ uniformly for $z \in \partial \mathcal{D}_{t_{k}}$, where $\tau(\alpha_{k},\beta_{k})$ is defined in \eqref{def of tau}.

\subsection{Small norm RH problem}\label{subsection:small norm}
We consider the function $R$ defined by
\begin{align}
\label{R-function}
R(z) = \begin{cases}
S(z)P^{(\infty)}(z)^{-1}, & z \in \C \setminus (\cup_{k=0}^{m}\overline{\mathcal{D}_{t_k}} \cup \mathbb{T} \cup  \gamma_{+} \cup \gamma_{-}),  \\
S(z)P^{(t_{k})}(z)^{-1}, & z \in \mathcal{D}_{t_{k}}\setminus (\mathbb{T} \cup  \gamma_{+} \cup \gamma_{-}), \; k=0,\dots,m.
\end{cases}
\end{align}
We have shown in the previous section that $P^{(t_{k})}$ and $S$ have the same jumps on $\mathbb{T} \cup  \gamma_{+} \cup \gamma_{-}$ and that $S(z)P^{(t_{k})}(z)^{-1}=\bigO(1)$ as $z\to t_{k}$. Hence $R$ is analytic in $\cup_{k=0}^m\mathcal{D}_{t_k}$. Using also the RH problems for $S$, $P^{(\infty)}$ and $P^{(t_{k})}$, we conclude that $R$ satisfies the following RH problem.
\subsubsection*{RH problem for $R$}
\begin{itemize}
\item[(a)] $R : \C \setminus \Gamma_{R} \to \mathbb{C}^{2 \times 2}$ is analytic, where $\Gamma_{R} = \cup_{k=0}^{m} \partial \mathcal{D}_{t_{k}} \cup \big((\gamma_{+} \cup \gamma_{-}) \setminus \cup_{k=0}^{m} \mathcal{D}_{t_{k}}\big)$ and the circles $\partial \mathcal{D}_{t_{k}}$ are oriented in the clockwise direction.
\item[(b)] The jumps are given by
\begin{align*}
& R_{+}(z) = R_{-}(z) P^{(\infty)}(z)\begin{pmatrix}
1 & 0 \\ e^{-W(z)}\omega(z)^{-1}e^{-2n\xi(z)} & 1
\end{pmatrix}P^{(\infty)}(z)^{-1}, & & z \in (\gamma_{+} \cup \gamma_{-}) \setminus \cup_{k=0}^{m} \overline{\mathcal{D}_{t_{k}}}, \\
& R_{+}(z) = R_{-}(z) P^{(t_{k})}(z)P^{(\infty)}(z)^{-1}, & & z \in \partial \mathcal{D}_{t_{k}}, \, k=0,\dots,m.
\end{align*}
\item[(c)] As $z \to \infty$, $R(z) = I + \bigO(z^{-1})$. 
\item[(d)]  
As $z \to z^{*}\in \Gamma_{R}^{*}$, where $\Gamma_{R}^{*}$ is the set of self-intersecting points of $\Gamma_{R}$, we have $R(z) = \bigO(1)$.
\end{itemize}
Recall that $\re \xi(z) \geq c > 0$ for $z \in (\gamma_{+} \cup \gamma_{-}) \setminus \cup_{k=0}^{m} \mathcal{D}_{t_{k}}$. Moreover, we see from \eqref{P-infty-sol} that $P^{(\infty)}(z)$ is bounded for $z$ away from the points $t_{0},\dots,t_{m}$. Using also \eqref{asymptotics on the disk D_t}, we conclude that as $n \to + \infty$
\begin{align}
& J_{R}(z) = I + \bigO(e^{-cn}), & & \mbox{uniformly for } z \in (\gamma_{+} \cup \gamma_{-}) \setminus \cup_{k=0}^{m} \overline{\mathcal{D}_{t_{k}}}, \label{JR est 1} \\
& J_{R}(z) = I + J_{R}^{(1)}(z)n^{-1} + \bigO(n^{-2+2\beta_{\max}}), & & \mbox{uniformly for } z \in \cup_{k=0}^{m} \partial \mathcal{D}_{t_{k}}, \label{JR est 2}
\end{align}
where $J_{R}(z):=R_{-}^{-1}(z)R_{+}(z)$ and
\begin{align*}
& J_{R}^{(1)}(z) = \frac{\beta_{k}^{2}-\frac{\alpha_{k}^{2}}{4}}{f_{t_{k}}(z)} E_{t_{k}}(z) \begin{pmatrix}
-1 & \tau(\alpha_{k},\beta_{k}) \\ - \tau(\alpha_{k},-\beta_{k}) & 1
\end{pmatrix}E_{t_{k}}(z)^{-1}, & & z \in \partial\mathcal{D}_{t_{k}}.
\end{align*}
Furthermore, it is easy to see that the $\bigO$-terms in \eqref{JR est 1}--\eqref{JR est 2} are uniform for $(\theta_{1},\dots,\theta_{m})$ in any given compact subset $\Theta \subset (0,2\pi)_{\mathrm{ord}}^{m}$, for $\alpha_{0},\dots,\alpha_{m}$ in any given compact subset $\mathfrak{A}\subset \{z \in \mathbb{C}: \re z >-1\}$, and for $\beta_{0},\dots,\beta_{m}$ in any given compact subset $\mathfrak{B}\subset\{z \in \mathbb{C}: \re z \in (-\frac{1}{2},\frac{1}{2})\}$. Therefore, $R$ satisfies a small norm RH problem, and the existence of $R$ for all sufficiently large $n$ can be proved using standard theory \cite{DZ1993, Deiftetal} as follows. Define the operator $\mathcal{C}:L^{2}(\Gamma_{R})\to L^{2}(\Gamma_{R})$ by $\mathcal{C}f(z) = \frac{1}{2\pi i}\int_{\Gamma_{R}}\frac{f(s)}{s-z}dz$, and denote $\mathcal{C}_{+}f$ and $\mathcal{C}_{-}f$ for the left and right non-tangential limits of $\mathcal{C}f$. Since $\Gamma_{R}$ is a compact set, by \eqref{JR est 1}--\eqref{JR est 2} we have $J_{R}-I \in L^{2}(\Gamma_{R})\cap L^{\infty}(\Gamma_{R})$, and we can define
\begin{align*}
\mathcal{C}_{J_{R}}: L^{2}(\Gamma_{R})+L^{\infty}(\Gamma_{R}) \to L^{2}(\Gamma_{R}), \qquad \mathcal{C}_{J_{R}}f=\mathcal{C}_{-}(f(J_{R}-I)), \qquad f \in L^{2}(\Gamma_{R})+L^{\infty}(\Gamma_{R}).
\end{align*}
Using $\| \mathcal{C}_{J_{R}} \|_{L^{2}(\Gamma_{R}) \to L^{2}(\Gamma_{R})} \leq C \|J_{R}-I\|_{L^{\infty}(\Gamma_{R})}$ and \eqref{JR est 1}--\eqref{JR est 2}, we infer that there exists $n_{0}=n_{0}(\Theta,\mathfrak{A},\mathfrak{B})$ such that $\| \mathcal{C}_{J_{R}} \|_{L^{2}(\Gamma_{R}) \to L^{2}(\Gamma_{R})} <1$ for all $n \geq n_{0}$, all $(\theta_{1},\dots,\theta_{m})\in \Theta$, all $\alpha_{0},\dots,\alpha_{m} \in \mathfrak{A}$ and all $\beta_{0},\dots,\beta_{m}\in \mathfrak{B}$. Hence, for $n \geq n_{0}$, $I-\mathcal{C}_{J_{R}}:L^{2}(\Gamma_{R}) \to L^{2}(\Gamma_{R})$ can be inverted as a Neumann series and thus $R$ exists and is given by
\begin{align}\label{R exists}
R=I+\mathcal{C}(\mu_{R}(J_{R}-I)), \qquad \mbox{where } \quad \mu_{R}:= I + (I-\mathcal{C}_{J_{R}})^{-1}\mathcal{C}_{J_{R}}(I).
\end{align}
Using \eqref{R exists}, \eqref{JR est 1} and \eqref{JR est 2}, we obtain
\begin{align}\label{R asymp}
R(z) = I+R^{(1)}(z)n^{-1} + \bigO(n^{-2+2\beta_{\max}}), \qquad \mbox{as } n \to + \infty,
\end{align}
uniformly for $(\theta_{1},\dots,\theta_{m})\in \Theta$, $\alpha_{0},\dots,\alpha_{m} \in \mathfrak{A}$ and $\beta_{0},\dots,\beta_{m}\in \mathfrak{B}$, where $R^{(1)}$ is given by
\begin{align*}
	R^{(1)}(z) = \sum_{k=0}^{m}\frac{1}{2\pi i} \int_{\partial \mathcal{D}_{t_{k}}} \frac{J_{R}^{(1)}(s)}{s-z}ds.
\end{align*}
Since the jumps $J_{R}$ are analytic in a neighborhood of $\Gamma_{R}$, the expansion \eqref{R asymp} holds uniformly for $z \in \mathbb{C}\setminus \Gamma_{R}$. It also follows from \eqref{R exists} that \eqref{R asymp} can be differentiated with respect to $z$ without increasing the error term. For $z \in \mathbb{C}\setminus  \cup_{k=0}^{m} \mathcal{D}_{t_{k}}$, a residue calculation using \eqref{asymptotics for f in D_t}, \eqref{Etk at tk} and \eqref{asymptotics on the disk D_t} shows that (recall that $\partial \mathcal{D}_{t_{k}}$ is oriented in the clockwise direction)
\begin{align}\label{R^1}
R^{(1)}(z) = \sum_{k=0}^{m} \frac{1}{z-t_{k}}  \frac{(\beta_{k}^{2}-\frac{\alpha_{k}^{2}}{4})t_{k}}{2\pi \psi(t_{k})} \begin{pmatrix}
1 & \Lambda_{k}^{-2} \tau(\alpha_{k},-\beta_{k}) \\
-\Lambda_{k}^{2} \tau(\alpha_{k},\beta_{k}) & -1
\end{pmatrix}.
\end{align}

\begin{remark}\label{remark:s and t uniform}
Above, we have discussed the uniformity of \eqref{JR est 1}--\eqref{JR est 2} and \eqref{R asymp} in the parameters $\theta_{k},\alpha_{k},\beta_{k}$. In Section \ref{section: int in V}, we will also need the following fact, which can be proved via a direct analysis (we omit the details here, see e.g. \cite[Lemma 4.35]{BerWebbWong} for a similar situation): If $V$ is replaced by $sV$, then \eqref{JR est 1}--\eqref{JR est 2} and \eqref{R asymp} also hold uniformly for $s \in [0,1]$.
\end{remark}

\begin{remark}\label{remark:diff}
If $k_{0},\dots,k_{2m+1}\in \mathbb{N}$, $k_{0}+\dots+k_{2m+1}\geq 1$ and $\partial^{\vec{k}}:=\partial_{\alpha_{0}}^{k_{0}}\dots\partial_{\alpha_{m}}^{k_{m}}\partial_{\beta_{0}}^{k_{m+1}}\dots\partial_{\beta_{m}}^{k_{2m+1}}$, then by \eqref{P-infty-sol} we have
\begin{align*}
& \partial^{\vec{k}}J_{R}(z) = \bigO(e^{-cn}), & & \mbox{uniformly for } z \in (\gamma_{+} \cup \gamma_{-}) \setminus \cup_{k=0}^{m} \overline{\mathcal{D}_{t_{k}}},  
\end{align*}
and by the same type of arguments that led to \eqref{asymptotics on the disk D_t} we have
\begin{align*}
& \partial^{\vec{k}}J_{R}(z) = \partial^{\vec{k}}(J_{R}^{(1)}(z))n^{-1} + \bigO\bigg(\frac{(\log n)^{k_{m+1}+\dots+k_{2m+1}}}{n^{2-2\beta_{\max}}}\bigg), & & \mbox{uniformly for } z \in \cup_{k=0}^{m} \partial \mathcal{D}_{t_{k}}.
\end{align*}
It follows that
\begin{align*}
\partial^{\vec{k}}R(z) = \partial^{\vec{k}}(R^{(1)}(z))n^{-1} + \bigO\bigg(\frac{(\log n)^{k_{m+1}+\dots+k_{2m+1}}}{n^{2-2\beta_{\max}}}\bigg), \qquad \mbox{as } n \to + \infty.
\end{align*}
\end{remark}
If $W$ is replaced by $tW$, $t \in [0,1]$, then the asymptotics \eqref{JR est 1}, \eqref{JR est 2} and \eqref{R asymp} are uniform with respect to $t$ and can also be differentiated any number of times with respect to $t$ without worsening the error term.

\section{Integration in $V$}\label{section: int in V}

Our strategy is inspired by \cite{BerWebbWong} and considers a linear deformation in the potential (in \cite{BerWebbWong} the authors study Hankel determinants related to point processes on the real line, see also \cite{Charlier, ChGha, CFWW2021} for subsequent works using similar deformation techniques). Consider the potential $\hat{V}_{s}:=sV$, where $s \in [0,1]$. It is immediate to verify that
\begin{align}
2 \int_{0}^{2\pi} \log |z-e^{i\theta}| d\mu_{\hat{V}_{0}}(e^{i\theta}) = \hat{V}_{0}(z) - \ell_{0}, & & \mbox{ for } z \in \mathbb{T}, \label{var equality at s=0}
\end{align}
with $d\mu_{\hat{V}_{0}}(e^{i\theta}):=\frac{1}{2\pi}d\theta$ and $\ell_{0}=0$. Using a linear combination of \eqref{var equality at s=0} and \eqref{var equality} (writing $\hat{V}_{s}=(1-s)\hat{V}_{0}+sV$), we infer that
 \begin{align}
2 \int_{0}^{2\pi} \log |z-e^{i\theta}| d\mu_{\hat{V}_{s}}(e^{i\theta}) = \hat{V}_{s}(z) - \ell_{s}, & & \mbox{ for } z \in \mathbb{T}, 
\end{align}
holds for each $s \in [0,1]$ with $\ell_{s}:=s \ell$ and $d\mu_{\hat{V}_{s}}(e^{i\theta})=\psi_{s}(e^{i\theta})d\theta$, $\psi_{s}(e^{i\theta}):=\frac{1-s}{2\pi}+s\psi(e^{i\theta})$. In particular, this shows that $\psi_{s}(e^{i\theta})>0$ for all $s \in [0,1]$ and all $\theta \in [0,2\pi)$. Hence, we can (and will) use the analysis of Section \ref{section:steepest descent analysis} with $V$ replaced by $\hat{V}_{s}$.

We first recall the following result, which will be used for our proof.

\begin{theorem}\label{thm:V=0}[Taken from \cite{Ehr, DIK2011}]
Let $m \in \mathbb{N}$, and let $t_{k}=e^{i\theta_{k}}$, $\alpha_{k}$ and $\beta_{k}$ be such that
\begin{equation*}
0=\theta_{0} < \theta_{1} < \dots < \theta_{m} < 2\pi, \quad \mbox{ and } \quad \re \alpha_{k} > -1, \quad \re \beta_{k} \in (-\tfrac{1}{2},\tfrac{1}{2}) \quad \mbox{ for } k=0,\dots,m.
\end{equation*} 
Let $W: \mathbb{T}\to\mathbb{R}$ be analytic, and define $W_{+}$ and $W_{-}$ as in \eqref{W as a Laurent series}. As $n \to +\infty$, we have
\begin{align}
D_n(\vec\alpha,\vec\beta,0,W)= \exp \bigg( D_{2}n + D_{3} \log n + D_{4} + \bigO\bigg( \frac{1}{n^{1- 2\beta_{\max}}} \bigg)\bigg),
\end{align}
where
\begin{align*}
D_{2} = &\; W_{0}, \\
 D_{3} = &\;\sum_{k=0}^{m} \bigg(\frac{\alpha_{k}^{2}}{4}-\beta_{k}^{2}\bigg), \\
 D_{4} = &\;\sum_{\ell = 1}^{+\infty} \ell W_{\ell}W_{-\ell} + \sum_{k=0}^{m} \bigg( \beta_{k}-\frac{\alpha_{k}}{2} \bigg) W_{+}(t_{k}) - \sum_{k=0}^{m} \bigg( \beta_{k}+\frac{\alpha_{k}}{2} \bigg) W_{-}(t_{k}) \\
& + \sum_{0 \leq j < k \leq m} \bigg\{ \frac{\alpha_{j} i \beta_{k} - \alpha_{k} i \beta_{j}}{2}(\theta_{k}-\theta_{j }-\pi) + \bigg( 2\beta_{j}\beta_{k}-\frac{\alpha_{j}\alpha_{k}}{2} \bigg) \log |t_{j}-t_{k}| \bigg\} \\
& + \sum_{k=0}^{m} \log \frac{G(1+\frac{\alpha_{k}}{2}+\beta_{k})G(1+\frac{\alpha_{k}}{2}-\beta_{k})}{G(1+\alpha_{k})},
\end{align*}
where $G$ is Barnes' $G$-function. Furthermore, the above asymptotics are uniform for all $\alpha_{k}$ in compact subsets of $\{z \in \mathbb{C}: \re z >-1\}$, for all $\beta_{k}$ in compact subsets of $\{z  \in \mathbb{C}: \re z \in (-\frac{1}{2},\frac{1}{2})\}$, and for all $(\theta_{1},\dots,\theta_{m})$ in compact subsets of $(0,2\pi)_{\mathrm{ord}}^{m}$.
\end{theorem}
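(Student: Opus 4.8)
\emph{Proof strategy.} This formula is originally due to Ehrhardt \cite{Ehr} and to Deift, Its and Krasovsky \cite{DIK2011, DeiftItsKrasovsky}; to establish it within the framework used in this paper one specializes the Riemann--Hilbert analysis of Section \ref{section:steepest descent analysis} to $V=0$ and pairs it with a chain of differential identities. When $V=0$ the equilibrium measure is the uniform measure on $\mathbb{T}$, so $\psi \equiv \tfrac{1}{2\pi}$ and the $g$-function of \eqref{g-def} reduces to the classical logarithmic potential of the uniform measure; with this choice the transformations $Y \to T \to S$, the global parametrix $P^{(\infty)}$ built from the Szeg\H{o} function $D$ in \eqref{P-infty-sol}--\eqref{DW simplified}, the confluent hypergeometric local parametrices $P^{(t_k)}$ in \eqref{def of local param}, and the small-norm reduction to $R = I + \bigO(n^{-1+2\beta_{\max}})$ all go through, uniformly for $\alpha_k,\beta_k$ in the stated compact sets and for $(\theta_1,\dots,\theta_m)$ in compact subsets of $(0,2\pi)_{\mathrm{ord}}^{m}$.

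To convert the RH data into asymptotics for $D_n$ I would anchor at the trivial point $\vec\alpha = \vec\beta = \vec 0$, $W=0$, where $D_n = 1$, and switch on the parameters one family at a time. First turn on $W$: apply the differential identity of Proposition \ref{prop: diff id} with the deformation $W \mapsto tW$, $t \in [0,1]$, insert the large-$n$ behaviour of $Y$ (equivalently of $R$ and $P^{(\infty)}$), and integrate in $t$; this reproduces the strong Szeg\H{o} contribution $nW_0 + \sum_{\ell \geq 1}\ell W_\ell W_{-\ell}$. Then turn on the Fisher--Hartwig data: derive formulas for $\partial_{\alpha_j}\log D_n$ and $\partial_{\beta_j}\log D_n$ in terms of the local behaviour of $Y$ near $t_j$ (the $V=0$ analogues of the identities used in \cite{DIK2011}), substitute the parametrix asymptotics, and integrate along a path in $(\vec\alpha,\vec\beta)$-space staying inside $\{\re\alpha_j > -1,\ \re\beta_j \in (-\tfrac{1}{2},\tfrac{1}{2})\}$. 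The $\log n$ coefficient $D_3 = \sum_k(\tfrac{\alpha_k^2}{4}-\beta_k^2)$ and the Barnes $G$-function terms emerge from this last integration: the integrands supplied by $\Phi_{\mathrm{HG}}$ near $t_j$ are digamma-type functions of $\tfrac{\alpha_j}{2}\pm\beta_j$ and of $\alpha_j$, whose antiderivatives assemble into $\log\frac{G(1+\frac{\alpha_j}{2}+\beta_j)G(1+\frac{\alpha_j}{2}-\beta_j)}{G(1+\alpha_j)}$, while the off-diagonal part of $P^{(\infty)}$ at $t_j$ and the branch conventions for $\omega_{\alpha_k},\omega_{\beta_k}$ generate the pairwise terms $\frac{\alpha_j i\beta_k - \alpha_k i\beta_j}{2}(\theta_k - \theta_j - \pi)$ and $(2\beta_j\beta_k - \tfrac{\alpha_j\alpha_k}{2})\log|t_j - t_k|$, and the $W_\pm(t_k)$ terms in $D_4$ arise because $D_W$ enters the parametrices.

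I expect the main obstacle to be bookkeeping rather than anything conceptual. One must carry the branch choices for $\omega_{\alpha_k}$, $\omega_{\beta_k}$, for the square roots inside $\Phi_{\mathrm{HG}}$ and $D$, and for the various logarithms consistently through every transformation, and one must verify that all $\bigO$-errors are uniform both in the parameters and in the positions $t_k$ --- in particular that the $m+1$ local parametrices do not spoil the small-norm estimate, which is harmless because the $t_k$ stay a fixed distance apart but must be made quantitative. A further point is that the differential-identity integrations have to be performed along admissible parameter paths, with the constant of integration correctly pinned down by the base case $\vec\alpha=\vec\beta=\vec 0$, $W=0$; equivalently, one may switch on $W$ first and then anchor the subsequent $(\vec\alpha,\vec\beta)$-integration at the strong Szeg\H{o} theorem \eqref{SSLT}.
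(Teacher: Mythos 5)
The paper does not prove Theorem \ref{thm:V=0} at all: it is imported from Ehrhardt \cite{Ehr} and Deift--Its--Krasovsky \cite{DIK2011}, with the sharpened error term $\bigO(n^{-1+2\beta_{\max}})$ taken from \cite[Remark 1.4]{DeiftItsKrasovsky}; the paper's own RH machinery (Sections \ref{section:steepest descent analysis}--\ref{section: int in V}) is used only for the deformation $V\mapsto sV$ in Proposition \ref{prop: int V}, and Theorem \ref{theorem U} follows by combining that with the cited Theorem \ref{thm:V=0}. Your proposal therefore takes a genuinely different route: a re-derivation of the $V=0$ asymptotics inside the paper's framework, via differential identities in $W$ and in $(\vec\alpha,\vec\beta)$ anchored at $D_n=1$ or at the strong Szeg\H{o} theorem \eqref{SSLT}. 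As a strategy this is faithful to how \cite{DIK2011} actually establishes the result, and your accounting of where the pieces of $D_4$ come from (Barnes $G$ from integrating digamma-type quantities in the parameters, the pairwise $\log|t_j-t_k|$ and $(\theta_k-\theta_j-\pi)$ terms from the Szeg\H{o} function and the branch conventions, the $W_\pm(t_k)$ terms from $D_W$ entering the parametrices) is correct in outline.

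However, as written it is a program rather than a proof, and what you call ``bookkeeping'' conceals the genuinely hard step. The paper's $s$-deformation succeeds cheaply because $\partial_s\log f=-nV$ is analytic in a neighborhood of $\mathbb{T}$, so in \eqref{uni: diff id} the contour can be moved off the circle onto $\mathcal{C}_i\cup\mathcal{C}_e$, away from the disks $\mathcal{D}_{t_k}$, and only $g$, $P^{(\infty)}$ and $R^{(1)}$ enter the asymptotics; the same is true for the $W\mapsto tW$ deformation. For $\gamma=\alpha_j$ or $\beta_j$ this mechanism fails: $\partial_\gamma\log f$ contains $\log|z-t_j|$ and a jump at $t_j$, the integral in Proposition \ref{prop: diff id} localizes at the singular point, the confluent hypergeometric parametrix $\Phi_{\mathrm{HG}}$ contributes at leading order, and one must evaluate the resulting expressions (and integrate them in $\alpha_j,\beta_j$ with the correct constant) to produce $\log\frac{G(1+\frac{\alpha_j}{2}+\beta_j)G(1+\frac{\alpha_j}{2}-\beta_j)}{G(1+\alpha_j)}$ together with the $n^{(\alpha_j^2/4)-\beta_j^2}$ factor. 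That computation is exactly the lengthy core of \cite{DIK2011} (and, by operator-theoretic methods, of \cite{Ehr}); your proposal defers it to those references rather than supplying it. Deferring is perfectly legitimate --- it is what the paper itself does --- but then the RH re-derivation you sketch is not needed, and if the sketch is meant as a self-contained proof, the $(\vec\alpha,\vec\beta)$-integration step is the missing substance.
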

\begin{remark}
The above theorem, but with the $\bigO$-term replaced by $o(1)$, was proved by Ehrhardt in \cite{Ehr}. The stronger estimate $\bigO( n^{-1+ 2\beta_{\max}} )$ was obtained in \cite[Remark 1.4]{DeiftItsKrasovsky}. (In fact the results \cite{Ehr, DeiftItsKrasovsky} are valid for more general values of the $\beta_{k}$'s, but this will not be needed for us.)
\end{remark}

\begin{lemma}\label{lemma: dashint ids}
For $z\in \mathbb{T}$, we have
\begin{align}
\frac{1}{i\pi}\dashint_{\mathbb{T}}\frac{V'(w)}{w-z}dw&=\frac{1}{z}\left(1-2\pi\psi(z)\right), \label{nice identity} \\
\frac{1}{i\pi}\dashint_{\mathbb{T}}\frac{V(w)}{w-z}dw&= V_{0} + V_{+}(z) - V_{-}(z) = V_{0} + 2i \, \im(V_{+}(z)), \label{nice identity 2}
\end{align}
where $\dashint$ stands for principal value integral.
\end{lemma}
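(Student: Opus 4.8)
The plan is to establish both identities by splitting $V$ (resp.\ $V'$) into its analytic‑inside, constant, and analytic‑outside parts according to the Laurent expansion \eqref{V as a Laurent series}, and then applying the Sokhotski--Plemelj formula to each part separately. Recall that $V$ is analytic in the annulus $U\supset\mathbb{T}$; hence $V_{+}(w)=\sum_{k\ge 1}V_{k}w^{k}$ extends holomorphically to a disk $\{|w|<1+\ep\}$, the constant $V_{0}$ is entire, and $V_{-}(w)=\sum_{k\le -1}V_{k}w^{k}$ extends holomorphically to $\{|w|>1-\ep\}\cup\{\infty\}$ with $V_{-}(\infty)=0$, all three Laurent series converging uniformly on $\mathbb{T}$. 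Writing $(Ch)(z)=\frac{1}{2\pi i}\int_{\mathbb{T}}\frac{h(w)}{w-z}\,dw$ for the Cauchy transform (with $\mathbb{T}$ oriented counterclockwise) and $(C_{\pm}h)(z)$ for its boundary values from the interior/exterior of $\mathbb{T}$, the Sokhotski--Plemelj relations give $(C_{+}h)(z)-(C_{-}h)(z)=h(z)$ and $(C_{+}h)(z)+(C_{-}h)(z)=\frac{1}{i\pi}\dashint_{\mathbb{T}}\frac{h(w)}{w-z}\,dw$ for $z\in\mathbb{T}$, valid for $h$ Hölder continuous on $\mathbb{T}$ (in particular for $V$ and $V'$, which are analytic there).

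For \eqref{nice identity 2} I would compute the boundary values termwise. Since $V_{+}$ is holomorphic inside $\mathbb{T}$, Cauchy's theorem gives $(C_{+}V_{+})(z)=V_{+}(z)$ and $(C_{-}V_{+})(z)=0$; likewise $(C_{+}V_{0})(z)=V_{0}$ and $(C_{-}V_{0})(z)=0$; and since $V_{-}$ is holomorphic outside $\mathbb{T}$ and is $\bigO(w^{-1})$ at infinity, deforming the contour to a large circle --- which encircles the pole at $w=z$ only when $|z|>1$ --- gives $(C_{+}V_{-})(z)=0$ and $(C_{-}V_{-})(z)=-V_{-}(z)$. Summing, $\frac{1}{i\pi}\dashint_{\mathbb{T}}\frac{V(w)}{w-z}\,dw=V_{0}+V_{+}(z)-V_{-}(z)$. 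Finally, since $V$ is real‑valued on $\mathbb{T}$ one has $V_{-k}=\overline{V_{k}}$, so for $z=e^{i\theta}\in\mathbb{T}$, $V_{-}(z)=\overline{V_{+}(z)}$, whence $V_{+}(z)-V_{-}(z)=2i\,\im(V_{+}(z))$, which is \eqref{nice identity 2}.

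The identity \eqref{nice identity} follows by the same argument applied to $V'(w)=V_{+}'(w)+V_{-}'(w)$, where $V_{+}'$ is holomorphic inside $\mathbb{T}$ and $V_{-}'$ is holomorphic outside and $\bigO(w^{-2})$ at infinity: exactly as above, $\frac{1}{i\pi}\dashint_{\mathbb{T}}\frac{V'(w)}{w-z}\,dw=V_{+}'(z)-V_{-}'(z)$. It then remains to identify the right‑hand side with $\frac{1}{z}(1-2\pi\psi(z))$, which is immediate from differentiating the Laurent series, using $\overline{V_{\ell}}=V_{-\ell}$, and invoking \eqref{psi solved explicitly}:
\[
z\bigl(V_{+}'(z)-V_{-}'(z)\bigr)=\sum_{k\ge 1}kV_{k}z^{k}-\sum_{k\le -1}kV_{k}z^{k}=\sum_{\ell\ge 1}\ell\bigl(V_{\ell}z^{\ell}+\overline{V_{\ell}}z^{-\ell}\bigr)=1-2\pi\psi(z).
\]
Alternatively, \eqref{nice identity} can be deduced from \eqref{nice identity 2} by differentiating in $z$ (legitimate since both sides of \eqref{nice identity 2} are boundary values of functions holomorphic in $\C\setminus\mathbb{T}$) and combining with \eqref{psi solved explicitly}.

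There is no serious obstacle here; the one point requiring care is keeping the Sokhotski--Plemelj bookkeeping consistent --- the orientation of $\mathbb{T}$, which side counts as ``$+$'', and the precise normalization $(C_{+}+C_{-})(h)(z)=\frac{1}{i\pi}\dashint_{\mathbb{T}}\frac{h(w)}{w-z}\,dw$ --- since sign errors are easy to make. I would sanity‑check the whole mechanism on the monomials $w$, $1$, and $w^{-1}$, whose Cauchy transforms follow from a one‑line residue computation, before assembling the general statement by term‑by‑term summation, which is justified by the uniform convergence of the Laurent series on $\mathbb{T}$.
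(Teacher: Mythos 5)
Your proof is correct. For \eqref{nice identity 2} it coincides with what the paper does (the paper simply says the identity ``follows from a direct residue computation, using \eqref{V as a Laurent series}'', which is exactly your Plemelj/contour-deformation calculation with the decomposition $V=V_{0}+V_{+}+V_{-}$). For \eqref{nice identity} you take a genuinely different route from the one the paper presents: you again apply Sokhotski--Plemelj to $V'=V_{+}'+V_{-}'$ and then identify $z\bigl(V_{+}'(z)-V_{-}'(z)\bigr)$ with $1-2\pi\psi(z)$ directly from the Fourier formula \eqref{psi solved explicitly}, i.e.\ precisely the ``direct residue calculation using \eqref{V as a Laurent series} and \eqref{psi solved explicitly}'' that the authors mention as possible but deliberately do not carry out. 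The paper instead constructs $H(z)=\varphi(z)\bigl(g'(z)-\tfrac{1}{2z}\bigr)-\tfrac{1}{2z}+\frac{1}{2\pi i}\int_{\mathbb{T}}\frac{V'(w)}{w-z}dw$, shows $H\equiv 0$ via the Euler--Lagrange relation \eqref{EL= in terms of g} and Liouville's theorem, and then reads off \eqref{nice identity} from $H_{+}+H_{-}=0$ together with the jump \eqref{uni: gprime} of $g'$. Your argument is more elementary and self-contained, needing only the explicit series for $\psi$; the paper's argument is potential-theoretic and works directly from the characterization of $\psi$ as the equilibrium density, which is the mechanism that generalizes to settings (Hankel, multi-cut) where no closed Fourier formula for the density is available. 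Your Plemelj bookkeeping (orientation, which side is ``$+$'', the normalization $(C_{+}+C_{-})h=\frac{1}{i\pi}\dashint_{\mathbb{T}}\frac{h(w)}{w-z}dw$, and $C_{-}V_{-}=-V_{-}$ from the decay $V_{-}(w)=\bigO(w^{-1})$) is all consistent, and the reduction $V_{-}(z)=\overline{V_{+}(z)}$ on $\mathbb{T}$ from real-valuedness of $V$ is exactly what is needed for the last equality in \eqref{nice identity 2}.
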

\begin{proof}
The first identity \eqref{nice identity} can be proved by a direct residue calculation using \eqref{V as a Laurent series} and \eqref{psi solved explicitly}. We give here another proof, more in the spirit of \cite[Lemma 5.8]{BerWebbWong} and \cite[Lemma 8.1]{ChGha}. 
Let $H,\varphi:\mathbb{C}\setminus \mathbb{T}\to\mathbb{C}$ be functions given by
\begin{align}\label{def of H and theta}
& H(z) = \varphi(z)\left(g'(z)-\frac{1}{2z}\right)-\frac{1}{2z}+\frac{1}{2\pi i}\int_{\mathbb{T}}\frac{V'(w)}{w-z}dw, & & \varphi(z)=\begin{cases}
-1, & |z|<1, \\
1, & |z|>1.
\end{cases}
\end{align}
Clearly, $H(\infty)=0$, and for $z\in \mathbb{T}$ we have
\begin{align*}
    H_+(z)-H_-(z)=-\left(g'_+(z)+g_-'(z)-\frac{1}{z}\right)+V'(z)=0,
\end{align*}
where for the last equality we have used \eqref{EL= in terms of g}. So $H(z)\equiv0$ by Liouville's theorem.  The identity \eqref{nice identity} now follows from the relations \eqref{uni: gprime} and 
\begin{align*}
    0=H_+(z)+H_-(z)=-(g'_+(z)-g'_-(z))-\frac{1}{z}+\frac{1}{i\pi}\dashint_{\mathbb{T}}\frac{V'(w)}{w-z}dz, \qquad z\in \mathbb{T}.
\end{align*}
The second identity \eqref{nice identity 2} follows from a direct residue computation, using \eqref{V as a Laurent series}. 

\end{proof}

\begin{proposition}\label{prop: int V}
As $n\to+\infty$,
\begin{align}
\log\frac{D_{n}(\vec\alpha,\vec\beta,V,W)}{D_{n}(\vec\alpha,\vec\beta,0,W)}=c_1 n^2 + c_2 n + c_3 + \bigO(n^{-1+2\beta_{\max}}),
\end{align}
where
\begin{align*}
c_1 & = -\frac{V_{0}}{2}-\frac{1}{2}\int_0^{2\pi}V(e^{i\theta}) d\mu_{V}(e^{i\theta}), \\
c_2&= \sum_{k=0}^{m} \frac{\alpha_{k}}{2}(V(t_{k})-V_{0}) - \sum_{k=0}^{m} 2i\beta_{k} \im(V_{+}(t_{k})) + \int_0^{2\pi}W(e^{i\theta})d\mu_V(e^{i\theta}) -W_{0}, \\
c_3&=\sum_{k=0}^m\frac{\beta_{k}^{2}-\frac{\alpha_{k}^{2}}{4}}{\psi(t_k)}\left(\frac{1}{2\pi}-\psi(t_k)\right).
\end{align*}
\end{proposition}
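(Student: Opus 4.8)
The plan is to apply the differential identity of Proposition \ref{prop: diff id} along the linear deformation $\hat{V}_{s}:=sV$, $s\in[0,1]$, introduced at the start of Section \ref{section: int in V}, and then to integrate over $s$. Taking $\gamma=s$ and $f=f_{s}(z):=e^{-n\hat{V}_{s}(z)}e^{W(z)}\omega(z)$ in \eqref{uni: diff id}, so that $\partial_{s}f_{s}=-nVf_{s}$, and integrating in $s$ gives
\begin{align*}
\log\frac{D_{n}(\vec{\alpha},\vec{\beta},V,W)}{D_{n}(\vec{\alpha},\vec{\beta},0,W)}=-\frac{n}{2\pi}\int_{0}^{1}\int_{0}^{2\pi}[Y_{s}^{-1}(z)Y_{s}'(z)]_{21}\,z^{-n+1}V(z)f_{s}(z)\,d\theta\,ds,\qquad z=e^{i\theta},
\end{align*}
where $Y_{s}:=Y_{n}(\cdot;\vec{\alpha},\vec{\beta},\hat{V}_{s},W)$. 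Since $\psi_{s}=\frac{1-s}{2\pi}+s\psi>0$ for every $s$, the steepest descent analysis of Section \ref{section:steepest descent analysis} applies with $V$ replaced by $\hat{V}_{s}$, and by Remark \ref{remark:s and t uniform} it is uniform for $s\in[0,1]$; in particular $D_{k}^{(n)}(f_{s})\ne0$ for $k=n-1,n,n+1$ and all large $n$, so the identity above is legitimate.

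The main work is to insert the steepest descent result into the integrand. Undoing the transformations $Y_{s}\to T_{s}\to S_{s}$ and using \eqref{EL= in terms of g} and \eqref{prop of xi} (all written for $\hat{V}_{s}$), one finds, for $z\in\mathbb{T}\setminus\{t_{0},\dots,t_{m}\}$,
\begin{align*}
[Y_{s}^{-1}(z)Y_{s}'(z)]_{21}\,z^{-n+1}f_{s}(z)=z\,e^{W(z)}\omega(z)\,e^{2n\xi_{s,+}(z)}\,[T_{s,+}^{-1}(z)T_{s,+}'(z)]_{21}.
\end{align*}
For $z\in\mathbb{T}$ outside the disks one has $T_{s,+}=R_{s}P^{(\infty)}_{+}L_{s}$, where $L_{s}$ is the lower-triangular factor in the factorization of $J_{T}$ (the one with $\xi_{+}$ in the exponent), while inside $\mathcal{D}_{t_{k}}$ one uses $T_{s,+}=R_{s}P^{(t_{k})}$. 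Splitting $\int_{\mathbb{T}}=\int_{\mathbb{T}\setminus\cup_{k}\mathcal{D}_{t_{k}}}+\sum_{k}\int_{\mathbb{T}\cap\mathcal{D}_{t_{k}}}$ and expanding the $(2,1)$-entry, the oscillatory factor $e^{2n\xi_{s,+}}$ cancels against the $e^{-2n\xi_{s,+}}$ hidden in $L_{s}$ in all but finitely many terms that are $\bigO(n^{-1})$ and still oscillatory; those are disposed of by deforming the contour to the side where $\re\xi_{s}>0$ (see the discussion below \eqref{integral form of xi}), using $R_{s}=I+R^{(1)}_{s}n^{-1}+\bigO(n^{-2+2\beta_{\max}})$ and the fact that $(P^{(\infty)})^{-1}(P^{(\infty)})'$ is diagonal (with $D$ as in \eqref{D_w_t def}--\eqref{DW simplified}). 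What survives sorts by powers of $n$: the $\bigO(n^{2})$ part comes from $(\xi_{s,+})'(z)=-\frac{\pi}{z}\psi_{s}(z)$ and equals $-n^{2}\int_{0}^{2\pi}V\,d\mu_{\hat{V}_{s}}$; the $\bigO(n)$ part comes from $W'$, $\omega'/\omega$, $D'/D$ and from the local parametrices $P^{(t_{k})}$ (through the factor $e^{n\xi_{s}}$ in \eqref{E in D_t}, which carries the value $\hat{V}_{s}(t_{k})$); and the $\bigO(1)$ part comes from $(P^{(\infty)}_{+})^{-1}R_{s}^{-1}R_{s}'P^{(\infty)}_{+}=\bigO(n^{-1})$ together with the $\mathcal{D}_{t_{k}}$-integrals, the factors $\Lambda_{k}^{\pm2}$ from \eqref{R^1} cancelling against those of $E_{t_{k}}(t_{k})$ in \eqref{Etk at tk} (this cancellation being what leaves the $\bigO(1)$ part free of $n$, consistently with the absence of a $\log n$ term in the statement).

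It remains to perform the elementary $s$-integrals. Since $\int_{0}^{1}\psi_{s}\,ds=\frac12(\frac{1}{2\pi}+\psi)$, the quadratic term gives $c_{1}=-\int_{0}^{1}\int_{0}^{2\pi}V\,d\mu_{\hat{V}_{s}}\,ds=-\frac{V_{0}}{2}-\frac12\int_{0}^{2\pi}V\,d\mu_{V}$. The $\bigO(n)$ contributions are $s$-independent apart from the factor $\hat{V}_{s}(t_{k})=sV(t_{k})$ coming from the local parametrices, so their $s$-integral is immediate; evaluating them uses Lemma \ref{lemma: dashint ids} (in particular $\frac{1}{i\pi}\dashint_{\mathbb{T}}\frac{V(w)}{w-z}dw=V_{0}+2i\,\im V_{+}(z)$), the Fourier identity $\int_{0}^{2\pi}W\,d\mu_{V}=W_{0}-\sum_{\ell\ne0}|\ell|V_{\ell}W_{-\ell}$, and a few residue computations, and yields $c_{2}$. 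Finally, the $\bigO(1)$ integrand turns out to be a total $s$-derivative, of the form $\sum_{k}\frac{\beta_{k}^{2}-\alpha_{k}^{2}/4}{2\pi}\,\partial_{s}\big(\psi_{s}(t_{k})^{-1}\big)$ (the extra factor $\psi_{s}(t_{k})^{-1}$, beyond the one already present in $R^{(1)}_{s}$, coming from $f_{t_{k}}'(t_{k})=2\pi t_{k}^{-1}\psi_{s}(t_{k})$), so that $c_{3}=\sum_{k}\frac{\beta_{k}^{2}-\alpha_{k}^{2}/4}{2\pi}\big(\psi(t_{k})^{-1}-2\pi\big)=\sum_{k}\frac{\beta_{k}^{2}-\alpha_{k}^{2}/4}{\psi(t_{k})}\big(\frac{1}{2\pi}-\psi(t_{k})\big)$.

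The hardest part is the bookkeeping in the second step: one must track the contributions of the local parametrices $P^{(t_{k})}$ carefully, since they already enter at order $n$ through the factors $e^{\pm n\xi_{s}}$ and $(nf_{t_{k}})^{\pm\beta_{k}}$ in \eqref{def of local param}--\eqref{E in D_t}, and one must show that every genuinely oscillatory remainder, after the contour deformations near $\partial\mathcal{D}_{t_{k}}$, integrates to $\bigO(e^{-cn})$. A further technical point is that, thanks to Remark \ref{remark:s and t uniform}, all the error bounds are uniform in $s\in[0,1]$, which is what lets the $\bigO(n^{-1+2\beta_{\max}})$ error survive the integration over $s$.
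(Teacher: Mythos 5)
Your overall skeleton (deform the potential to $\hat V_s=sV$, apply Proposition \ref{prop: diff id} with $\gamma=s$, integrate over $s\in[0,1]$, and invoke Remark \ref{remark:s and t uniform} for uniformity) is the same as the paper's, and your treatment of the $n^2$ term is essentially the paper's computation of $I_{1,s}$. But you miss the paper's decisive device, and your substitute for it is not carried out. The paper does \emph{not} evaluate the differential identity on the unit circle: using the jump \eqref{Y jump} it observes that $[Y^{-1}Y']_{21}$ has no jump across $\mathbb{T}$ while the jump of $[Y^{-1}Y']_{11}$ reproduces $-z^{-n}f\,[Y^{-1}Y']_{21}$, and since $\partial_s\log f=-nV$ is analytic near $\mathbb{T}$ it rewrites \eqref{uni: diff id} as an integral of $[Y^{-1}Y']_{11}\,\partial_s\log f$ over contours $\mathcal{C}_i\cup\mathcal{C}_e$ lying strictly inside/outside $\mathbb{T}$ and away from the lenses and the disks $\mathcal{D}_{t_k}$, see \eqref{diff id in s}. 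There $T=S=RP^{(\infty)}$, so only $ng'$, the explicit logarithmic derivative \eqref{d log D} of the Szeg\H{o} function, and $R^{-1}R'$ enter, and everything reduces to residue computations plus Lemma \ref{lemma: dashint ids}; no local parametrix and no oscillatory factor ever appears. Your route stays on $\mathbb{T}$, so on the arcs $\mathbb{T}\cap\mathcal{D}_{t_k}$ you must integrate expressions built from $\Phi_{\mathrm{HG}}(nf_{t_k}(z);\alpha_k,\beta_k)$ at \emph{finite} argument against $V$, and off the disks you must show the oscillatory lens factors cancel to $\bigO(n^{-1+2\beta_{\max}})$ uniformly in $s$. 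These are exactly the steps you label ``bookkeeping'' and describe qualitatively (``what survives sorts by powers of $n$'', ``the $\Lambda_k^{\pm2}$ cancel''), but never perform; they constitute the actual content of the proof in your approach, and nothing in the write-up establishes them.

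The gap is visible in the one concrete claim you do make about the constant term: you assert that the $\bigO(1)$ integrand is $\sum_k\frac{\beta_k^2-\alpha_k^2/4}{2\pi}\partial_s\big(\psi_s(t_k)^{-1}\big)$, justifying the ``extra'' factor $\psi_s(t_k)^{-1}$ by $f_{t_k}'(t_k)=2\pi t_k^{-1}\psi_s(t_k)$. But the factor $\psi(t_k)^{-1}$ appearing in $R^{(1)}$ in \eqref{R^1} is \emph{already} produced by that same derivative, via the residue of $1/f_{t_k}$ at $t_k$ (see \eqref{asymptotics for f in D_t} and the derivation of \eqref{R^1}), so your explanation double-counts it, and no computation replaces it. Moreover, the factor $\big(\tfrac{1}{2\pi}-\psi(t_k)\big)$ in $c_3$ arises in the paper from the principal-value identity \eqref{nice identity} applied to $V'$, which you never use (you only invoke \eqref{nice identity 2}, which enters at order $n$); your total-derivative ansatz is effectively reverse-engineered from the stated $c_3$ rather than derived. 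Similar unverified bookkeeping occurs at order $n$: in the paper $I_{2,s}$ is exactly $s$-independent and receives no contribution from the local parametrices (the contours avoid the disks), whereas you attribute an $s$-dependent piece $sV(t_k)$ to the factor $e^{n\xi_s}$ in \eqref{E in D_t} without computing it. Finally, a smaller slip: your displayed identity relating $[Y_s^{-1}Y_s']_{21}z^{-n+1}f_s$ to $e^{2n\xi_{s,+}}[T_{s,+}^{-1}T_{s,+}']_{21}$ omits the $n$-dependent constant factors $e^{-2n\ell_s}e^{2ni(\pi+\hat c_s)}$ coming from \eqref{def of T} and \eqref{xi +}. In short: same starting strategy as the paper, but the key reduction to $\mathcal{C}_i\cup\mathcal{C}_e$ is missing, and the hard part of your alternative is asserted rather than proved.
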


\begin{proof}
We will use \eqref{uni: diff id} with $V=\hat{V}_{s}$ and $\gamma=s$, i.e.
\begin{align}\label{int: V}
\partial_{s} \log D_{n}(\vec{\alpha},\vec{\beta},\hat{V}_{s},W) = \frac{1}{2\pi}\int_{0}^{2\pi}[Y^{-1}(z)Y'(z)]_{21}z^{-n+1}\partial_{s}f(z)d\theta,
\end{align}
where $f(z)=e^{-n\hat{V}_{s}(z)}\omega(z)$ and $Y(\cdot) = Y_{n}(\cdot;\vec{\alpha},\vec{\beta},\hat{V}_{s},W)$. Recall from Proposition \ref{prop: diff id} that \eqref{int: V} is valid only when $D_{k}^{(n)}(f) \neq 0$, $k=n-1,n,n+1$. However, it follows from the analysis of Subsection \ref{subsection:small norm} (see also Remark \ref{remark:s and t uniform}) that the right-hand side of \eqref{int: V} exists for all $n$ sufficiently large, for all $(\theta_{1},\dots,\theta_{m})\in \Theta$, all $\alpha_{0},\dots,\alpha_{m}\in \mathfrak{A}$, all $\beta_{0},\dots,\beta_{m}\in \mathfrak{B}$, and all $s \in [0,1]$. Hence we can extend \eqref{int: V} by continuity (see also \cite{Krasovsky, ItsKrasovsky, DeiftItsKrasovsky, Charlier, ChGha} for similar situations with more details provided). By \eqref{Y jump}, for $z\in \mathbb{T}\setminus \{t_{0},\dots,t_{m}\}$ we have
\begin{align}
& [Y(z)^{-1}Y'(z)]_{21,+}=[Y(z)^{-1}Y'(z)]_{21,-}, \\
& [Y(z)^{-1}Y'(z)]_{21}=-\frac{z^{n}}{f(z)}\left([Y(z)^{-1}Y'(z)]_{11,+}-[Y(z)^{-1}Y'(z)]_{11,-}\right),
\end{align}
and thus, using that $\partial_s\log f(z) = -nV(z)$ is analytic in a neighborhood of $\mathbb{T}$, 
\begin{align}\label{diff id in s}
\partial_{s} \log D_{n}(\vec{\alpha},\vec{\beta},\hat{V}_{s},W)=\frac{-1}{2\pi i}\int_{\mathcal{C}_e\cup\mathcal{C}_i}\left[Y^{-1}(z)Y'(z)\right]_{11}\partial_s\log f(z)dz,
\end{align}
where $\mathcal{C}_i \subset \{z:|z|<1\}\cap U$ is a closed curve oriented counterclockwise and surrounding $0$, and  $\mathcal{C}_e \subset \{z:|z|>1\} \cap U$ is a closed curve oriented clockwise and surrounding $0$. We choose $\mathcal{C}_i$ and $\mathcal{C}_e$ such that they do not intersect $\mathbb{T}\cup\gamma_+\cup \gamma_{-}\cup\mathcal{D}_{t_0}\cup\cdots\cup\mathcal{D}_{t_m}$.

Inverting the transformations $Y \mapsto T \mapsto S \mapsto R$ of Section \ref{section:steepest descent analysis} using \eqref{def of T}, \eqref{T to S} and \eqref{R-function}, for $z \in \mathcal{C}_e\cup\mathcal{C}_i$ we find
\begin{align*}
    \left[Y^{-1}(z)Y'(z)\right]_{11}=ng'(z)+\left[P^{(\infty)}(z)^{-1}P^{(\infty)\prime}(z)\right]_{11}+\left[P^{(\infty)}(z)^{-1}R(z)^{-1}R'(z)P^{(\infty)}(z)\right]_{11}.
\end{align*}
Substituting the above in \eqref{diff id in s}, we find the following exact identity:
\begin{align*}
\partial_{s} \log D_{n}(\vec{\alpha},\vec{\beta},\hat{V}_{s},W) = I_{1,s}+I_{2,s}+I_{3,s},
\end{align*}
where
\begin{align}
I_{1,s}&=\frac{-n}{2\pi i}\int_{\mathcal{C}_e\cup\mathcal{C}_i}g'(z)\partial_s\log f(z)dz, \label{I1s} \\
I_{2,s}&=\frac{-1}{2\pi i}\int_{\mathcal{C}_e\cup\mathcal{C}_i}\left[P^{(\infty)}(z)^{-1}P^{(\infty)\prime}(z)\right]_{11}\partial_s \log f(z)dz, \label{I2s} \\
I_{3,s}&=\frac{-1}{2\pi i}\int_{\mathcal{C}_e\cup\mathcal{C}_i}\left[P^{(\infty)}(z)^{-1}R(z)^{-1}R'(z)P^{(\infty)}(z)\right]_{11}\partial_s \log f(z)dz. \label{I3s}
\end{align}
Using $\partial_s\log f(z) = -nV(z)$ and \eqref{uni: gprime} (with $\psi$ replaced by $\psi_{s}$), we find
\begin{align*}
I_{1,s}=\frac{n^2}{2\pi i}\int_{\mathbb{T}}(g_+'(z)-g_-'(z))V(z)dz=-n^2\int_{\mathbb{T}}V(z)\psi_{s}(z)\frac{dz}{iz},
\end{align*}
and since $\psi_{s}=\frac{1-s}{2\pi}+s\psi$,
\begin{align}
\int_0^1 I_{1,s}ds=-\frac{n^2}{2}\int_0^{2\pi}V(e^{i\theta})\Big( \frac{1}{2\pi}+\psi(e^{i\theta}) \Big)d\theta = -\frac{n^2}{2}\bigg(V_{0}+\int_0^{2\pi}V(e^{i\theta}) d\mu_{V}(e^{i\theta}) \bigg) = c_1 n^2.
\end{align}
Now we turn to the analysis of $I_{2,s}$. Using \eqref{P-infty-sol}, we obtain
\begin{align}\label{PInfInv PPrime}
\Big[P^{(\infty)}(z)^{-1}P^{(\infty)\prime}(z)\Big]_{11}=\varphi(z)\partial_z[\log D(z)]
\end{align}
where $\varphi$ is defined in \eqref{def of H and theta}. Also, by \eqref{D_w_t def}, \eqref{DaDb simplify} and \eqref{DW simplified}, we have
\begin{align}\label{d log D}
\partial_z\log D(z)=
\begin{cases}
W_{+}'(z)+\sum_{k=0}^{m}\left(\beta_k+\frac{\alpha_k}{2}\right)\frac{1}{z-t_k}, & |z|<1, \\
-W_{-}'(z)+\sum_{k=0}^{m}\left(\beta_k-\frac{\alpha_k}{2}\right)\big(\frac{1}{z-t_k}-\frac{1}{z}\big), & |z|>1,
\end{cases}
\end{align}
where $W_\pm$ are defined in \eqref{W as a Laurent series}, and by \eqref{psi solved explicitly}, we have
\begin{align}\label{contrib WV}
- \sum_{k=-\infty}^{+\infty}|k|W_{k}V_{-k} = \int_0^{2\pi}W(e^{i\theta})d\mu_V(e^{i\theta}) -W_{0}.
\end{align}
Substituting \eqref{PInfInv PPrime} and \eqref{d log D} in \eqref{I2s}, and doing a residue computation, we obtain
\begin{align*}
I_{2,s} & = - n \sum_{k=-\infty}^{+\infty}|k|W_{k}V_{-k} + n\sum_{k=0}^{m} \frac{\alpha_{k}}{2}(V(t_{k})-V_{0}) - n \sum_{k=0}^{m} \beta_{k} \bigg( \frac{1}{\pi i}\dashint_{\mathbb{T}} \frac{V(z)}{z-t_{k}} dz - V_{0} \bigg) = c_{2}n,
\end{align*}
where for the last equality we have used \eqref{nice identity 2} and \eqref{contrib WV}. Clearly, $I_{2,s}$ is independent of $s$, and therefore $\int_{0}^{1}I_{2,s}ds = c_{2}n$. We now analyze $I_{3,s}$ as $n \to + \infty$. From \eqref{R asymp}, we have
\begin{align*}
R^{-1}(z)R'(z)=n^{-1}R^{(1)\prime}(z) + \bigO(n^{-2 + 2 \beta_{\max}}),
\end{align*}
and, using first \eqref{P-infty-sol} and then \eqref{R^1},
\begin{align*}
\left[P^{(\infty)}(z)^{-1}n^{-1}R^{(1)\prime}(z)P^{(\infty)}(z)\right]_{11}&=\frac{1}{n} \times \begin{cases}
\left[R^{(1)\prime}(z)\right]_{22}, & |z|<1 \\
\left[R^{(1)\prime}(z)\right]_{11}, & |z|>1
\end{cases} =\frac{-\varphi(z)}{2\pi n}\sum_{k=0}^{m}\frac{(\beta_k^2-\frac{\alpha_k^2}{4})t_k}{\psi(t_k)(z-t_k)^2}.
\end{align*}
Therefore, as $n \to + \infty$
\begin{align*}
I_{3,s}=\frac{1}{2\pi}\sum_{k=0}^m\frac{(\beta_{k}^{2}-\frac{\alpha_{k}^{2}}{4})t_k}{\psi(t_k)}\frac{1}{ 2\pi i}\left(\int_{C_i}-\int_{C_e}\right)\frac{V(z)}{(z-t_k)^2}dz + \bigO(n^{-1+2\beta_{\max}}).
\end{align*}
Partial integration yields
\begin{align*}
\frac{1}{2\pi i}\left(\int_{C_i}-\int_{C_e}\right)\frac{V(z)}{(z-t_k)^2}dz=\frac{1}{2\pi i}\left(\int_{C_i}-\int_{C_e}\right)\frac{V'(z)}{z-t_k}dz=\frac{1}{\pi i}\dashint_{\mathbb{T}}\frac{V'(z)}{z-t_k}dz,
\end{align*}
and thus, by \eqref{nice identity}, we have
\begin{align*}
I_{3,s} & = \frac{1}{2\pi}\sum_{k=0}^{m}\frac{(\beta_{k}^{2}-\frac{\alpha_{k}^{2}}{4})t_k}{\psi(t_k)} \frac{1}{t_k}\left(1-2\pi\psi(t_k)\right) + \bigO(n^{-1+2\beta_{\max}}), \qquad \mbox{as } n \to + \infty.
\end{align*}
Since the above asymptotics are uniform for $s \in [0,1]$ (see Remark \ref{remark:s and t uniform}), the claim follows.
\end{proof}

Theorem \ref{theorem U} now directly follows by combining Proposition \ref{prop: int V} with Theorem \ref{thm:V=0}. (The estimate \eqref{der of error in thm} follows from Remark \ref{remark:diff}.)

\section{Proofs of Corollaries \ref{coro:smooth}, \ref{coro:log}, \ref{coro:counting}, \ref{coro:order}, \ref{coro:rigidity}}\label{Section: rigidity}
Let $e^{\phi_{1}},\dots,e^{i\phi_{n}}$ be distributed according to \eqref{point process} with $\phi_{1},\dots,\phi_{n}\in [0,2\pi)$. Recall that $N_{n}(\theta) = \#\{\phi_{j}\in [0,\theta)\}$ and that the angles $\phi_{1},\dots,\phi_{n}$ arranged in increasing order are denoted by $0 \leq \xi_{1} \leq \xi_{2} \leq \dots \leq \xi_{n} < 2\pi$. 

\paragraph{Proof of Corollary \ref{coro:smooth}.} The asymptotics for the cumulants $\{\kappa_{j}\}_{j=1}^{+\infty}$ follow directly from \eqref{cum of W}, Theorem \ref{thm:MGF} (with $m=0$, $\alpha_{0}=0$ and with $W$ replaced by $tW$), and the fact that \eqref{exp in thm} can be differentiated any number of time with respect to $t$ without worsening the error term (see Remark \ref{remark:diff}). Furthermore, if $W$ is non-constant, then $\sum_{k = 1}^{+\infty} kW_{k}W_{-k} = \sum_{k = 1}^{+\infty} k|W_{k}|^{2} > 0$ (because $W$ is assumed to be real-valued) and from Theorem \ref{thm:MGF} (with $m=0$, $\alpha_{0}=0$ and with $W$ replaced by $\frac{tW}{(2\sum_{k = 1}^{+\infty} kW_{k}W_{-k})^{1/2}}$, $t \in \mathbb{R}$) we also have
\begin{align*}
\mathbb{E}\bigg[ \exp \bigg( t\frac{\sum_{j=1}^{n}W(e^{i\phi_{j}})-n\int_0^{2\pi} W(e^{i\phi})d\mu_V(e^{i\phi})}{(2\sum_{k = 1}^{+\infty} kW_{k}W_{-k})^{1/2}} \bigg) \bigg] = e^{\frac{t^{2}}{2}+ \bigO(n^{-1})},
\end{align*}
as $n \to + \infty$ with $t\in \mathbb{R}$ arbitrary but fixed. The convergence in distribution stated in Corollary \ref{coro:smooth} now follows from standard theorems (see e.g. \cite[top of page 415]{Bill}).

\paragraph{Proof of Corollary \ref{coro:log}.} The proof is similar to the proof of Corollary \ref{coro:smooth}. The main difference is that (i) for the asymptotics of the cumulants, one needs to use Theorem \ref{thm:MGF} with $W=0$, $m=0$ if $t = 1$, and with $W=0$, $m=1$, $\alpha_0 = 0$, $u_{1}=0$ if $t \in \mathbb{T} \setminus \{1\}$, and (ii) for the convergence in distribution, one needs to use Theorem \ref{thm:MGF} with $W=0$, $m=0$, and $\alpha_{0}$ replaced by $\alpha\sqrt{2}/\sqrt{\log n}$, $\alpha \in \mathbb{R}$ fixed, if $t = 1$, and with $W=0$, $m=1$, $\alpha_0 = 0$, $u_{1}=0$ and $\alpha_1$ replaced by $\alpha\sqrt{2}/\sqrt{\log n}$, $\alpha \in \mathbb{R}$ fixed, if $t \in \mathbb{T}\setminus \{1\}$.

\paragraph{Proof of Corollary \ref{coro:counting}.} This proof is also similar to the proof of Corollary \ref{coro:smooth}. For the asymptotics of the cumulants, one needs to use Theorem \ref{thm:MGF} with $W=0$, $m=1$, $\alpha_{0}=\alpha_{1}=0$ and for the convergence in distribution, one needs to use Theorem \ref{thm:MGF} with $W=0$, $m=1$, $\alpha_{0}=\alpha_{1}=0$, and with $u_{1}$ replaced by $\pi u/\sqrt{\log n}$, $u\in \mathbb{R}$ fixed.

\paragraph{Proof of Corollary \ref{coro:order}.} 

The proof is inspired by Gustavsson \cite[Theorem 1.2]{Gustavsson}. Let $\theta\in (0,2\pi)$ and $k_{\theta}=[n \int_{0}^{\theta}d\mu_{V}(e^{i \phi})]$, where $[x]:= \lfloor x + \frac{1}{2}\rfloor$, and consider the random variable
\begin{align}\label{Yn tj def}
Y_{n} := \frac{n\int_{0}^{\xi_{k_{\theta}}}d\mu_{V}(e^{i \phi}) - k_{\theta}}{\sqrt{\log n}/\pi} = \frac{\mu_{n}(\xi_{k_{\theta}})-k_{\theta}}{\sigma_{n}}, 
\end{align} 
where $\mu_{n}(\xi) := n\int_{0}^{\xi}d\mu_{V}(e^{i \phi})$ and $\sigma_{n} := \frac{1}{\pi}\sqrt{\log n}$. 
For $y \in \mathbb{R}$, we have
\begin{align}
& \mathbb{P}\big[ Y_{n} \leq y \big] = \mathbb{P}\Big[\xi_{k_{\theta}} \leq \mu_{n}^{-1}\big(k_{\theta} + y \sigma_{n}\big) \Big] = \mathbb{P}\Big[N_{n}\Big(\mu_{n}^{-1}\big(k_{\theta} + y \sigma_{n} \big)\Big) \geq k_{\theta} \Big]. \label{prob1}
\end{align}
Letting $\tilde{\theta} := \mu_{n}^{-1}\big(k_{\theta} + y \sigma_{n} \big)$, we can rewrite \eqref{prob1} as
\begin{align}\label{PYny}
\mathbb{P}\big[ Y_{n} \leq y \big] & = \mathbb{P}\bigg[ \frac{N_{n}(\tilde{\theta})-\mu_{n}(\tilde{\theta})}{\sqrt{\sigma_{n}^{2}}} \geq \frac{k_{\theta}-\mu_{n}(\tilde{\theta})}{\sigma_{n}} \bigg] = \mathbb{P}\bigg[ \frac{\mu_{n}(\tilde{\theta})-N_{n}(\tilde{\theta})}{\sigma_{n}} \leq y \bigg].
\end{align}
As $n \to +\infty$, we have 
\begin{align}\label{tj tilde remain bounded away from each other}
k_{\theta} = [\mu_{n}(\theta)] = \bigO(n), \qquad \tilde{\theta} = \theta \Big(1+\bigO\Big(\tfrac{\sqrt{\log n}}{n}\Big)\Big). 
\end{align} 
Since Theorem \ref{thm:MGF} also holds in the case where $\theta$ depends on $n$ but remains bounded away from $0$, the same is true for the convergence in distribution in Corollary \ref{coro:counting}. 
By \eqref{tj tilde remain bounded away from each other}, $\tilde{\theta}$ remains bounded away from $0$, and therefore Corollary \ref{coro:counting} together with (\ref{PYny}) implies that $Y_{n}$ converges in distribution to a standard normal random variable. 
Since
\vspace{-0.1cm}
\begin{align*}
\mathbb{P}\bigg[ \frac{n\psi(e^{i\eta_{k_{\theta}}})}{\sqrt{\log n}/\pi}(\xi_{k_{\theta}}-\eta_{k_{\theta}}) \leq y \bigg] 
& = \mathbb{P}\bigg[ Y_{n} \leq  \frac{\mu_{n}(\eta_{k_{\theta}} + y \frac{\sigma_{n}}{n\psi(e^{i\eta_{k_{\theta}}})})-\mu_{n}(\eta_{k_{\theta}})}{\sigma_{n}} \bigg] 
	\\
& = \mathbb{P}\bigg[ Y_{n} \leq \int_{\eta_{k_\theta}}^{\eta_{k_\theta} + \frac{y\sigma_n}{n \psi(e^{i \eta_{k_\theta}})}} \frac{n\psi(e^{i\phi})}{\sigma_n} d\phi \bigg] 
= \mathbb{P}\big[ Y_{n} \leq y +o(1) \big]
\end{align*}
\normalsize
as $n\to + \infty$, this implies the convergence in distribution in the statement of Corollary \ref{coro:order}.

\paragraph{Proof of Corollary \ref{coro:rigidity}.}
Let $\mu_{n}(\xi) := n\int_{0}^{\xi}d\mu_{V}(e^{i \phi})$, $\sigma_{n} := \frac{1}{\pi}\sqrt{\log n}$, and for $\theta \in [0,2\pi)$, let $\overline{N}_{n}(\theta) := N_{n}(\theta)-\mu_{n}(\theta)$. Using Theorem \ref{thm:MGF} with $W=0$, $m \in \N_{>0}$, $\alpha_{0}=\ldots=\alpha_{m}=0$ and $u_{1},\ldots,u_{m} \in \mathbb{R}$, we infer that for any $\delta \in (0,\pi)$ and $M>0$, there exists $n_{0}'=n_{0}'(\delta,M)\in \mathbb{N}$ and $\mathrm{C}=\mathrm{C}(\delta,M)>0$ such that
\begin{align}\label{expmomentbound}
\mathbb{E} \big( e^{\sum_{k=1}^{m}u_{k}\overline{N}_{n}(\theta_{k})} \big) \leq  \mathrm{C} \exp \bigg( \frac{\sum_{k=0}^{m} u_{k}^{2}}{2}\frac{\sigma_{n}^{2}}{2} \bigg),
\end{align}
for all $n\geq n_{0}'$, $(\theta_{1},\ldots, \theta_{m})$ in compact subsets of $(0,2\pi)_{\mathrm{ord}}^{m}\cap (\delta,2\pi-\delta)^{m}$ and $u_{1},\ldots,u_{m} \in [-M,M]$, and where $u_{0}=-u_{1}-\ldots-u_{m}$.

\begin{lemma}\label{lemma: A r eps}
For any $\delta \in (0,\pi)$, there exists $c>0$ such that for all large enough $n$ and small enough $\epsilon>0$,
\begin{align}\label{prob statement lemma 2.1}
\mathbb P\left(\sup_{\delta \leq \theta \leq 2\pi-\delta}\bigg|\frac{N_{n}(\theta)-\mu_{n}(\theta)}{\sigma^2_{n}}\bigg|\leq \pi (1+\epsilon) \right) \geq 1-\frac{c}{\log n}.
\end{align}
\end{lemma}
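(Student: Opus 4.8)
The plan is to deduce Lemma~\ref{lemma: A r eps} from the exponential-moment estimate \eqref{expmomentbound} by a multiscale union bound, the passage from a discrete net to the continuous supremum being handled by the monotonicity of $N_n$. Write $\alpha_n:=\pi(1+\epsilon)\sigma_n^2=(1+\epsilon)\tfrac{\log n}{\pi}$ and recall $\overline N_n(\theta)=N_n(\theta)-\mu_n(\theta)$. A Chernoff argument from \eqref{expmomentbound} with $m=1$, $u_1=\pm u$ (so $u_0=\mp u$), optimized over $u$, gives the pointwise bound $\mathbb{P}\big(|\overline N_n(\theta)|>a\big)\le C\exp\!\big(-\tfrac{a^2}{2\sigma_n^2}\big)$, uniformly for $\theta\in[\delta,2\pi-\delta]$; with $a=\alpha_n$ this is $\le Cn^{-(1+\epsilon)^2/2}$. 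The naive approach — a single net $\mathcal N$, equispaced in $d\mu_V$, the oscillation of $\overline N_n$ between consecutive net points being at most $n/|\mathcal N|$ up to a constant by monotonicity of $N_n$ — forces $|\mathcal N|\gtrsim n/\log n$ so that this oscillation does not already exceed $\alpha_n$, and then the union bound $|\mathcal N|\cdot n^{-(1+\epsilon)^2/2}$ beats $c/\log n$ only when $(1+\epsilon)^2>2$. Recovering the sharp constant for every small $\epsilon$ therefore demands a genuine multiscale refinement, and this is the crux of the proof.

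I would introduce nested nets $\mathcal N_0\subset\cdots\subset\mathcal N_J$, equispaced in $d\mu_V$ with $|\mathcal N_j|\asymp 2^j$ and $2^J\asymp n$, and for each $\theta$ the chain $\pi_0(\theta),\dots,\pi_J(\theta)$ with $\pi_j(\theta)\in\mathcal N_j$ nearest to $\theta$. By monotonicity, $\sup_{\theta\in[\delta,2\pi-\delta]}|\overline N_n(\theta)|\le\max_{s\in\mathcal N_J}|\overline N_n(s)|+\bigO(1)$, so it suffices to control $\max_{s\in\mathcal N_J}|\overline N_n(s)|$; but rather than a flat union bound over $\mathcal N_J$ I would write $\overline N_n(s)=\overline N_n(\pi_0(s))+\sum_{j=1}^{J}\big(\overline N_n(\pi_j(s))-\overline N_n(\pi_{j-1}(s))\big)$, allocate a budget $a_j$ to scale $j$ with $\sum_j a_j=\alpha_n$, and observe that the event $\{\max_s|\overline N_n(s)|>\alpha_n\}$ forces some scale $j$ to overshoot $a_j$ \emph{along a chain whose coarser portion already lies within budget}. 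The number of such admissible chain-segments at scale $j$ is far smaller than $|\mathcal N_j|$ — this truncation is what replaces the factor $\sim n$ of the naive bound — and estimating the probabilities requires the scale-dependent two-point bound
\[
\mathbb{E}\Big(e^{u\,(\overline N_n(\theta_2)-\overline N_n(\theta_1))}\Big)\le C\exp\!\Big(\tfrac{u^2}{2\pi^2}\log\!\big(n|t_1-t_2|\big)\Big),
\]
which I would extract from Theorem~\ref{thm:MGF} (with $W=0$, $\alpha_k=0$, $m\le2$) by keeping the $\log|t_j-t_k|$ cross-terms of $\tilde C_3$ instead of absorbing them into the constant as in \eqref{expmomentbound}; this is the ingredient carrying the logarithmic correlations. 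Balancing the $a_j$ against the per-scale increment variance $\asymp\tfrac{1}{\pi^2}\log(n2^{-j})$ and the (truncated) combinatorial cost at each scale then makes $\sum_j a_j=(1+o(1))\tfrac{\log n}{\pi}$ while the total failure probability telescopes to $\bigO(n^{-\rho})$ for some $\rho=\rho(\epsilon)>0$, in particular $\le c/\log n$; running the argument for $-\overline N_n$ covers the lower tail.

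Two loose ends are routine. At the finest scale the residual $\overline N_n(\theta)-\overline N_n(\pi_J(\theta))$ is the centred number of points in an arc of length $\asymp1/n$; since \eqref{point process} is determinantal with sine-kernel-type local behaviour, the probability that such an arc carries $\ge k$ points is $\le C^k/k!$, so with $k\asymp\log n/\log\log n$ and a union bound over the $\asymp n$ arcs this residual is $o(\log n)$ with probability $1-o(1/\log n)$, negligible against $\alpha_n$; and the few multi-point estimates needed at scales where chain points nearly merge — outside the non-merging regime of Theorem~\ref{thm:MGF} — can likewise be replaced by crude determinantal bounds on short-arc occupation, which suffice because the per-scale budget there is already $\bigO(1)$. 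The uniformity of the $\bigO$-terms in Theorems~\ref{theorem U} and~\ref{thm:MGF} over compact subsets of $(0,2\pi)^m_{\mathrm{ord}}\cap(\delta,2\pi-\delta)^m$ is exactly what keeps the constants in the multiscale estimate uniform in the net points. The main obstacle is precisely the balancing in the second step: the flat union bound overshoots the optimal constant by a factor that does not tend to $1$ as $\epsilon\to0$, and arranging the multiscale refinement so that it delivers the \emph{sharp} leading constant $\tfrac1\pi$ — not merely some $C>\tfrac1\pi$ — is where the real work lies; it is in essence the rigidity/multiscale second-moment argument of \cite{ABB2017, CFLW2021}, now fed by the exponential moments of Theorem~\ref{thm:MGF} in place of the $V=0$ ones.
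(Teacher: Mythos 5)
Your diagnosis of the difficulty is right: a flat union bound with the one-point tail $\exp(-a^{2}/(2\sigma_n^{2}))$ only yields the constant $\sqrt{2}\pi$, because the single-point variance $\sigma_n^{2}=\frac{\log n}{\pi^{2}}$ is twice the ``local'' variance relevant for the maximum. But the route you propose to recover the sharp constant --- a genuine multiscale chaining argument in the spirit of \cite{ABB2017, CFLW2021}, fed by the scale-dependent two-point bound $\mathbb{E}\,e^{u(\overline N_n(\theta_2)-\overline N_n(\theta_1))}\le C\exp\big(\tfrac{u^{2}}{2\pi^{2}}\log(n|t_1-t_2|)\big)$ --- has a genuine gap within the paper's toolkit. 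That two-point estimate does follow from Theorem \ref{thm:MGF} (keeping the $\log|t_j-t_k|$ cross term of $\tilde C_3$), but \emph{only} uniformly for $(\theta_1,\theta_2)$ in compact subsets of the ordered simplex, i.e.\ for separations bounded below independently of $n$. Your chaining scheme needs it uniformly down to separations $\asymp 2^{-j}$ with $2^{j}$ up to $\asymp n$, which is precisely the merging regime that Theorem \ref{thm:MGF} excludes (the paper only cites \cite{ClKr, Fahs} for merging singularities, and only for $V=0$). You acknowledge this and propose to patch it with ``crude determinantal bounds on short-arc occupation'' and ``sine-kernel-type local behaviour'', but no such local estimates are established in the paper for a general one-cut regular $V$; together with the unexecuted budget allocation ($\sum_j a_j=(1+o(1))\frac{\log n}{\pi}$ is asserted, not derived), the crux of your argument remains unproven. (Also, the finest-scale residual you treat via local determinantal bounds is already handled for free by your own monotonicity reduction if the net is taken to be the classical locations $\eta_k$, since $\mu_n$ varies by exactly $1$ between consecutive $\eta_k$.)

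The paper avoids chaining altogether by a much lighter device. It reduces, via monotonicity, to the classical locations $\eta_k$, and then recenters: it introduces two \emph{fixed} finite reference sets $S_m,S_m'$ of $m$ points each, controls $\overline N_n$ at those $2m$ points by Chebyshev (cost $O(m/(\epsilon^{2}\log n))$, which is where the $1-\frac{c}{\log n}$ comes from), and bounds the tail of $\overline N_n(\eta_k)-\frac1m\sum_{\hat\theta\in S_m}\overline N_n(\hat\theta)$ by a Chernoff argument from \eqref{expmomentbound}. The key is that with $u_1=u$ at $\eta_k$ and $u_j=-u/m$ at the reference points one gets $u_0=0$, so $\sum_k u_k^{2}=u^{2}(1+\frac1m)$ rather than $2u^{2}$: the shared fluctuation at $\theta_0=1$ (which inflates the one-point variance by the factor $2$) is cancelled, the effective variance drops to $\frac{\sigma_n^{2}}{2}(1+\frac1m)$, and the per-point tail becomes $O\big(n^{-(1+\epsilon/2)^{2}/(1+1/m)}\big)$, which beats the union bound over $\asymp n$ values of $k$ once $m$ is large relative to $\epsilon$ (two sets $S_m,S_m'$ are needed only so that every $\eta_k$ stays away from at least one of them, keeping \eqref{expmomentbound} applicable). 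So the sharp constant is obtained with only the non-merging multi-point estimate \eqref{expmomentbound} plus Chebyshev, whereas your approach, even if completed, would require new inputs (merging-regime or local estimates) that lie outside what the paper proves.
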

\begin{proof}
A naive adaptation of \cite[Lemma 8.1]{CharlierMB} (an important difference between \cite{CharlierMB} and our situation is that $\sigma_{n} = \frac{\sqrt{\log n}}{\sqrt{2}\pi}$ in \cite{CharlierMB} while here we have $\smash{\sigma_{n} = \frac{\sqrt{\log n}}{\pi}}$) yields 
\begin{align*}
\mathbb P\left(\sup_{\delta \leq \theta \leq 2\pi-\delta}\bigg|\frac{N_{n}(\theta)-\mu_{n}(\theta)}{\sigma^2_{n}}\bigg|\leq \sqrt{2}\pi (1+\epsilon) \right) \geq 1-o(1).
\end{align*}
The inequality \eqref{expmomentbound} can in fact be used to obtain the stronger statement \eqref{prob statement lemma 2.1}.\footnote{We are very grateful to a referee for pointing this out.} Recall that $\eta_{k}= \mu_{n}^{-1}(k)$ is the classical location of the $k$-th smallest point $\xi_{k}$ and is defined in \eqref{def of kappa k}. Since $\mu_{n}$ and $N_{n}$ are increasing functions, for $x\in[\eta_{k-1},\eta_k]$ with $k \in \{1,\ldots,n\}$, we have
\begin{equation}\label{lol1}
N_{n}(x)-\mu_{n}(x)\leq N_{n}(\eta_k)-\mu_{n}(\eta_{k-1})
=N_{n}(\eta_k)-\mu_{n}(\eta_{k})+1,
\end{equation}
which implies
\begin{align*}
\sup_{\delta \leq x \leq 2\pi-\delta}\frac{N_{n}(x)-\mu_{n}(x)}{\sigma_{n}^2} \leq \sup_{k\in \mathcal{K}_{n}}
\frac{N_{n}(\eta_k)-\mu_{n}(\eta_{k})+1}{\sigma_{n}^2},
\end{align*}
where $\mathcal{K}_{n} = \{k: \eta_{k}>\delta \mbox{ and } \eta_{k-1}<2\pi-\delta\}$. Hence, for any $v>0$,
\begin{align*}
\mathbb P\left(\sup_{\delta \leq x \leq 2\pi-\delta}\frac{N_{n}(x)-\mu_{n}(x)}{\sigma^2_{n}}>v\right)\leq \mathbb P\left( \sup_{k\in \mathcal{K}_{n}} \frac{N_{n}(\kappa_k)-\mu_{n}(\kappa_{k})}{\sigma_{n}^2}>v-\frac{1}{\sigma_{n}^{2}}\right).
\end{align*}
Let $\epsilon_{0}>0$ be small and fixed, and let $\mathcal{I}$ be an arbitrary but fixed subset of $(0,\epsilon_{0}]$. The claim \eqref{prob statement lemma 2.1} will follow if we can prove for any $\epsilon \in \mathcal{I}$ that
\begin{align}\label{lol9}
\mathbb P\left( \sup_{k\in \mathcal{K}_{n}} \frac{N_{n}(\eta_{k})-\mu_{n}(\eta_{k})}{\sigma_{n}^2}>\pi(1+\epsilon)\right) \leq \frac{c_{1}}{\log n},
\end{align}
for some $c_{1}=c_{1}(\mathcal{I})>0$. Let $m \in \N$ be fixed and $S_{m}$ and $S_{m}'$ be the following two collections of points of size $m$
\begin{align*}
& S_{m} = \bigg\{ \delta + (2\pi-2\delta) \frac{4j+1}{4m}: \qquad j=0,\ldots,m-1 \bigg\}, \\
& S_{m}' = \bigg\{ \delta + (2\pi-2\delta) \frac{4j+2}{4m}: \qquad j=0,\ldots,m-1 \bigg\}.
\end{align*}
Let $X_{n}(\theta):= (N_{n}(\theta)-\mu_{n}(\theta))/\sigma_{n}$. For any $\theta \in [\delta,2\pi-\delta]$, we have by Corollary \ref{coro:counting} that $\mathbb{E}[X_{n}(\theta)] = \bigO(\frac{\sqrt{\log n}}{n})$ and $\mbox{Var}[X_{n}(\theta)] \leq 2$ for all large enough $n$. Hence, by Chebyshev's inequality, for any fixed $\ell > 0$, $\mathbb{P}\big(\frac{|X_{n}|}{\sigma_{n}} \geq \ell\big) \leq \frac{3}{\ell^{2}\sigma_{n}^{2}}$ for all large enough $n$. Using this inequality with $\ell=\frac{\pi\epsilon}{2}$ together with a union bound, we get 
\begin{align*}
\mathbb P\left(\sup_{\hat{\theta} \in S_{m}\cup S_{m}'}\bigg|\frac{N_{n}(\hat{\theta})-\mu_{n}(\hat{\theta})}{\sigma^2_{n}}\bigg| > \frac{\pi\epsilon}{2} \right) = \mathbb P\left(\sup_{\hat{\theta} \in S_{m}\cup S_{m}'}\bigg|\frac{X_{n}(\hat{\theta})}{\sigma_{n}}\bigg| > \frac{\pi\epsilon}{2} \right) \leq \frac{3 \times 2m}{(\frac{\pi\epsilon}{2})^{2}\frac{\log n}{\pi^{2}}} = \frac{24m}{\epsilon^{2}\log n},
\end{align*}
and then
\begin{align}
& \PP \left(\sup_{k \in \mathcal{K}_{n}}\bigg|\frac{N_{n}(\eta_{k})-\mu_{n}(\eta_{k})}{\sigma^2_{n}}\bigg| > \pi (1+\epsilon) \right) \leq \frac{24 m}{\epsilon^{2}\log n}  \nonumber \\
& \hspace{1cm} + \sum_{k \in \mathcal{K}_{n}} \PP \bigg(  \bigg|\frac{N_{n}(\eta_{k})-\mu_{n}(\eta_{k})}{\sigma^2_{n}}\bigg| > \pi (1+\epsilon) \quad \mbox{ and } \quad \sup_{\hat{\theta} \in S_{m}\cup S_{m}'}\bigg|\frac{X_{n}(\hat{\theta})}{\sigma_{n}}\bigg| \leq  \frac{\pi \epsilon}{2} \bigg). \label{lol6}
\end{align}
The reason for introducing two subsets $S_{m},S_{m}'$ is the following: for any $k \in \mathcal{K}_{n}$, one must have that $\eta_{k}$ remains bounded away from at least one of $S_{m},S_{m}'$ (so that \eqref{expmomentbound} can be applied). Indeed, suppose for example that $\theta$ is bounded away from $S_{m}$, then by \eqref{expmomentbound} (with $m$ replaced by $m+1$ and with $u_{1}=u$ and $u_{2}=\ldots=u_{m+1}=-\frac{u}{m}$) we have
\begin{align*}
\E \bigg[ \exp \bigg( u \overline{N}_{n}(\eta_{k})-\frac{u}{m}\sum_{\hat{\theta}\in S_{m}}\overline{N}_{n}(\hat{\theta}) \bigg) \bigg] \leq \mathrm{C} \exp \bigg\{ \frac{u^{2}\sigma_{n}^{2}}{4}\bigg( 1+\frac{1}{m} \bigg) \bigg\}
\end{align*}
and similarly, 
\begin{align*}
\E \bigg[ \exp \bigg( \frac{u}{m}\sum_{\hat{\theta}\in S_{m}}\overline{N}_{n}(\hat{\theta}) - u \overline{N}_{n}(\eta_{k}) \bigg) \bigg] \leq \mathrm{C} \exp \bigg\{ \frac{u^{2}\sigma_{n}^{2}}{4}\bigg( 1+\frac{1}{m} \bigg) \bigg\}.
\end{align*}
Hence, if $\eta_{k}$ remains bounded away from $S_{m}$, we have (with $\gamma:=\pi(1+\epsilon/2)$ and $\alpha:= \frac{1}{2}(1+\frac{1}{m})$)
\begin{align}
& \PP \bigg( \bigg|\frac{\overline{N}_{n}(\eta_{k})}{\sigma^2_{n}}\bigg| > \pi (1+\epsilon) \quad \mbox{ and } \quad \sup_{\hat{\theta} \in S_{m}\cup S_{m}'}\bigg| \frac{\overline{N}_{n}(\hat{\theta})}{\sigma_{n}^{2}}\bigg| \leq  \frac{\pi \epsilon}{2} \bigg) \label{lol7} \\
& \leq \PP \bigg(  \frac{\overline{N}_{n}(\eta_{k})-\frac{1}{m}\sum_{\hat{\theta}\in S_{m}}\overline{N}_{n}(\hat{\theta})}{\sigma_{n}^{2}}  > \gamma \bigg) + \PP \bigg(  \frac{\frac{1}{m}\sum_{\hat{\theta}\in S_{m}}\overline{N}_{n}(\hat{\theta})-\overline{N}_{n}(\eta_{k})}{\sigma_{n}^{2}}  > \gamma \bigg) \nonumber \\
&  = \PP \bigg( e^{\frac{\gamma}{\alpha}(\overline{N}_{n}(\eta_{k})-\frac{1}{m}\sum_{\hat{\theta}\in S_{m}}\overline{N}_{n}(\hat{\theta}))}  > e^{\frac{\gamma^{2}}{\alpha}\sigma_{n}^{2}} \bigg) + \PP \bigg( e^{\frac{\gamma}{\alpha}(\frac{1}{m}\sum_{\hat{\theta}\in S_{m}}\overline{N}_{n}(\hat{\theta})-\overline{N}_{n}(\eta_{k}))}  > e^{\frac{\gamma^{2}}{\alpha}\sigma_{n}^{2}} \bigg) \nonumber \\
& \leq \E \bigg( e^{\frac{\gamma}{\alpha}(\overline{N}_{n}(\eta_{k})-\frac{1}{m}\sum_{\hat{\theta}\in S_{m}}\overline{N}_{n}(\hat{\theta}))}  \bigg)e^{-\frac{\gamma^{2}}{\alpha} \sigma_{n}^{2}} + \E \bigg( e^{\frac{\gamma}{\alpha}(\frac{1}{m}\sum_{\hat{\theta}\in S_{m}}\overline{N}_{n}(\hat{\theta})-\overline{N}_{n}(\eta_{k}))} \bigg)e^{-\frac{\gamma^{2}}{\alpha}\sigma_{n}^{2}} \nonumber \\
& \leq 2 \mathrm{C} \exp \bigg( \frac{\gamma^{2}\sigma_{n}^{2}}{4\alpha^{2}}\bigg( 1+\frac{1}{m} \bigg)-\frac{\gamma^{2}}{\alpha}\sigma_{n}^{2} \bigg) = 2 \mathrm{C} \exp \bigg( - \frac{\pi^{2}(1+\frac{\epsilon}{2})^{2}\sigma_{n}^{2}}{1+\frac{1}{m}} \bigg) = 2 \mathrm{C} n^{- \frac{(1+\frac{\epsilon}{2})^{2}}{1+\frac{1}{m}}}. \label{lol8}
\end{align}
We obtain the same bound \eqref{lol8} if $\eta_{k}$ in \eqref{lol7} is instead bounded away from $S_{m}'$. The above exponent is less than $-1$ provided that $m$ is sufficiently large relative to $\epsilon$. Since the number of points in $\mathcal{K}_{n}$ is proportional to $n$, the claim \eqref{prob statement lemma 2.1} now directly follows from \eqref{lol6} (recall also \eqref{lol9}). 
\end{proof}

\begin{lemma}\label{lemma: xk not far away from kappa k}
Let $\delta \in (0,\frac{\pi}{2})$ and $\epsilon > 0$. For all sufficiently large $n$, if the event
\begin{align}\label{event holds true}
\sup_{\delta \leq \theta \leq 2\pi-\delta}\left|\frac{N_{n}(\theta)-\mu_{n}(\theta)}{\sigma^2_{n}}\right| \leq \pi(1+\epsilon)
\end{align}
holds true, then we have
\begin{align}\label{upper and lower bound for mu xk}
\sup_{k \in (\mu_{n}(2\delta),\mu_{n}(2\pi-2\delta))} \bigg|\frac{\mu_{n}(\xi_k) - k}{\sigma^2_{n}}\bigg| \leq \pi (1+\epsilon) + \frac{1}{\sigma_{n}^{2}},
\end{align}
\end{lemma}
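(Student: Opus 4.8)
The plan is to show that, on the event \eqref{event holds true} and for $n$ large, every order statistic $\xi_k$ with $k\in(\mu_n(2\delta),\mu_n(2\pi-2\delta))$ already lies in the interval $[\delta,2\pi-\delta]$ on which the event controls $N_n-\mu_n$; the bound \eqref{upper and lower bound for mu xk} is then obtained by comparing $\mu_n(\xi_k)$ first with $N_n(\xi_k)$ (via the event) and then with $k$ (which differs from $N_n(\xi_k)$ by at most $1$).

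First I would record an elementary estimate on $\mu_n$. Since $\psi$ is continuous and strictly positive on the compact set $\mathbb{T}$, the constant $c:=\min_{\mathbb{T}}\psi$ is positive and $\mu_n(b)-\mu_n(a)=n\int_a^b\psi(e^{i\phi})\,d\phi\ge cn(b-a)$ for $0\le a\le b\le 2\pi$. Because $\delta\in(0,\tfrac{\pi}{2})$ we have $0<\delta<2\delta<2\pi-2\delta<2\pi-\delta<2\pi$, and in particular $\mu_n(2\delta)-\mu_n(\delta)\ge cn\delta$ and $\mu_n(2\pi-\delta)-\mu_n(2\pi-2\delta)\ge cn\delta$. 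Since $\sigma_n^2=\frac{\log n}{\pi^2}$, we have $\pi(1+\epsilon)\sigma_n^2=\frac{(1+\epsilon)\log n}{\pi}<cn\delta$ for all $n$ large enough (depending only on $\delta$ and $\epsilon$); fix such an $n$. We may also assume, as holds with probability one under \eqref{point process}, that $\phi_1,\dots,\phi_n$ are pairwise distinct, so that $N_n(\xi_k)=\#\{\phi_j<\xi_k\}=k-1$ for every $k$ (recall $N_n(\theta)=\#\{\phi_j\in[0,\theta)\}$).

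Next I would prove, by contradiction, that $\xi_k\in[\delta,2\pi-\delta]$ whenever $k\in(\mu_n(2\delta),\mu_n(2\pi-2\delta))$. If $\xi_k<\delta$, then $N_n(\delta)=\#\{\phi_j<\delta\}\ge\#\{\phi_j\le\xi_k\}=k$; combining this with \eqref{event holds true} evaluated at $\theta=\delta$ gives $k\le N_n(\delta)\le\mu_n(\delta)+\pi(1+\epsilon)\sigma_n^2<\mu_n(\delta)+cn\delta\le\mu_n(2\delta)$, contradicting $k>\mu_n(2\delta)$. If instead $\xi_k>2\pi-\delta$, then $N_n(2\pi-\delta)=\#\{\phi_j<2\pi-\delta\}\le k-1$; combining this with \eqref{event holds true} at $\theta=2\pi-\delta$ gives $k>N_n(2\pi-\delta)\ge\mu_n(2\pi-\delta)-\pi(1+\epsilon)\sigma_n^2>\mu_n(2\pi-\delta)-cn\delta\ge\mu_n(2\pi-2\delta)$, contradicting $k<\mu_n(2\pi-2\delta)$. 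Hence $\xi_k\in[\delta,2\pi-\delta]$, and \eqref{event holds true} may be applied at $\theta=\xi_k$.

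Finally, using \eqref{event holds true} at $\theta=\xi_k$ together with $N_n(\xi_k)=k-1$,
\[
|\mu_n(\xi_k)-k|\le|\mu_n(\xi_k)-N_n(\xi_k)|+|N_n(\xi_k)-k|\le\pi(1+\epsilon)\sigma_n^2+1,
\]
and dividing by $\sigma_n^2$ and taking the supremum over $k\in(\mu_n(2\delta),\mu_n(2\pi-2\delta))$ yields \eqref{upper and lower bound for mu xk}. There is no genuinely hard step here: the argument is a direct comparison between the random, left-continuous, piecewise-constant counting function $N_n$ and the deterministic, strictly increasing, Lipschitz function $\mu_n$, using only that $\psi$ is bounded away from $0$ and that $\sigma_n^2$ is logarithmic while the gaps $\mu_n(2\delta)-\mu_n(\delta)$ and $\mu_n(2\pi-\delta)-\mu_n(2\pi-2\delta)$ are linear in $n$. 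The one place requiring a little care is the bookkeeping at the endpoints $\theta=\delta$ and $\theta=2\pi-\delta$ and the reduction to the almost-sure event on which the $\phi_j$ are distinct (so that $N_n(\xi_k)=k-1$ exactly).
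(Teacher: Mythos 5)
Your proof is correct and complete: the two-step argument (first using the growth $\mu_n(2\delta)-\mu_n(\delta)\ge cn\delta\gg\pi(1+\epsilon)\sigma_n^2$ to force $\xi_k\in[\delta,2\pi-\delta]$, then comparing $\mu_n(\xi_k)$ with $N_n(\xi_k)=k-1$ via the event) is exactly the standard rigidity argument that the paper invokes by reference to \cite[Lemma 8.2]{CharlierMB} and omits. The only cosmetic point is the paper's inconsistent convention for $N_n$ (closed versus half-open interval), but either convention gives $|N_n(\xi_k)-k|\le 1$, which your $+1/\sigma_n^2$ slack absorbs.
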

\begin{proof}
The proof is almost identical to the proof of \cite[Lemma 8.2]{CharlierMB} so we omit it.
\end{proof}

By combining Lemmas \ref{lemma: A r eps} and \ref{lemma: xk not far away from kappa k}, we arrive at the following result (the proof is very similar to \cite[Proof of (1.38)]{CharlierMB}, so we omit it).
\begin{lemma}\label{lemma:rigidity ordered}
For any $\delta \in (0,\pi)$, there exists $c>0$ such that for all large enough $n$ and small enough $\epsilon>0$,
\begin{align}
& \mathbb{P}\bigg( \max_{\delta n \leq k \leq (1-\delta)n}  \psi(e^{i\eta_{k}})|\xi_{k}-\eta_{k}| \leq \frac{1+\epsilon}{\pi} \frac{\log n}{n} \bigg) \geq 1-\frac{c}{\log n}. \label{lol3}
\end{align}
\end{lemma}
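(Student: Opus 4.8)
The plan is to combine Lemmas \ref{lemma: A r eps} and \ref{lemma: xk not far away from kappa k}, which control the fluctuations of $N_n$ and thereby of $\mu_n(\xi_k)$, and then to convert the resulting bound on $\mu_n(\xi_k)-k$ into one on $\psi(e^{i\eta_k})|\xi_k-\eta_k|$ using only that $\psi$ is analytic and strictly positive on $\mathbb{T}$. Write $\mu_n(\theta)=nF(\theta)$ with $F(\theta):=\int_0^\theta\psi(e^{i\phi})d\phi$; since $\psi>0$ is analytic on $\mathbb{T}$, the map $F:[0,2\pi]\to[0,1]$ is a fixed increasing $C^\infty$ bijection, $\psi_{\min}:=\min_{\mathbb{T}}\psi>0$, and $L:=\mathrm{Lip}(\phi\mapsto\psi(e^{i\phi}))<\infty$. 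Given $\delta\in(0,\pi)$, I would first choose an auxiliary parameter $\delta'=\delta'(\delta,\psi)\in(0,\tfrac\pi4)$ so small that $F(2\delta')<\delta$ and $F(2\pi-2\delta')>1-\delta$. Then for every $n$, the classical location $\eta_k=F^{-1}(k/n)$ lies in $(2\delta',2\pi-2\delta')$ whenever $\delta n\le k\le(1-\delta)n$, and the whole set $\{k\in\mathbb{Z}:\delta n\le k\le(1-\delta)n\}$ is contained in the open interval $(\mu_n(2\delta'),\mu_n(2\pi-2\delta'))$ of Lemma \ref{lemma: xk not far away from kappa k}.

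Next, fix a small $\epsilon>0$ and apply Lemma \ref{lemma: A r eps} with $\delta$ replaced by $\delta'$ and $\epsilon$ replaced by $\tfrac\epsilon2$: there is $c=c(\delta')$, hence $c=c(\delta)$ and independent of $\epsilon$, such that with probability at least $1-\tfrac{c}{\log n}$ the event
\[
E_n:=\Big\{\sup_{\delta'\le\theta\le2\pi-\delta'}\big|\tfrac{N_n(\theta)-\mu_n(\theta)}{\sigma_n^2}\big|\le\pi(1+\tfrac\epsilon2)\Big\}
\]
holds. On $E_n$, Lemma \ref{lemma: xk not far away from kappa k} (with the same $\delta'$ and $\tfrac\epsilon2$) yields, for all $\delta n\le k\le(1-\delta)n$,
\[
|\mu_n(\xi_k)-k|\le\pi(1+\tfrac\epsilon2)\sigma_n^2+1=\tfrac{(1+\epsilon/2)\log n}{\pi}+1 .
\]

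To finish, I would translate this into the desired estimate. Since $\mu_n(\xi_k)-k=\mu_n(\xi_k)-\mu_n(\eta_k)=n\int_{\eta_k}^{\xi_k}\psi(e^{i\phi})d\phi$ and $\psi\ge\psi_{\min}>0$, the previous display forces the a priori bound $|\xi_k-\eta_k|\le\frac{1}{n\psi_{\min}}\big(\tfrac{(1+\epsilon/2)\log n}{\pi}+1\big)\le C_0\tfrac{\log n}{n}$, uniformly in $k$, with $C_0=C_0(\delta,\psi)$ (the $\epsilon$-dependence is harmless since $\epsilon$ is small). Splitting
\[
n\int_{\eta_k}^{\xi_k}\psi(e^{i\phi})d\phi=n\psi(e^{i\eta_k})(\xi_k-\eta_k)+n\int_{\eta_k}^{\xi_k}\big(\psi(e^{i\phi})-\psi(e^{i\eta_k})\big)d\phi
\]
and bounding the last integrand by $L|\phi-\eta_k|\le L|\xi_k-\eta_k|$, I get
\[
n\,\psi(e^{i\eta_k})|\xi_k-\eta_k|\le|\mu_n(\xi_k)-k|+nL|\xi_k-\eta_k|^2\le\tfrac{(1+\epsilon/2)\log n}{\pi}+1+LC_0^2\tfrac{(\log n)^2}{n},
\]
and the right-hand side equals $\tfrac{(1+\epsilon/2)\log n}{\pi}+O\big(\tfrac{(\log n)^2}{n}\big)$ uniformly in $k$. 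Hence for all $n$ large (depending on $\epsilon,\delta,\psi$) it is $\le\tfrac{(1+\epsilon)\log n}{\pi}$, so on $E_n$ one has $\max_{\delta n\le k\le(1-\delta)n}\psi(e^{i\eta_k})|\xi_k-\eta_k|\le\tfrac{1+\epsilon}{\pi}\tfrac{\log n}{n}$; since $\mathbb{P}(E_n)\ge1-\tfrac{c}{\log n}$, this gives \eqref{lol3}.

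The computation is essentially bookkeeping; the two points needing genuine care are (i) choosing $\delta'$ so that the index range $[\delta n,(1-\delta)n]$ and the associated $\eta_k$ fall inside the ranges covered by Lemmas \ref{lemma: A r eps} and \ref{lemma: xk not far away from kappa k}, and (ii) the conversion from $\mu_n(\xi_k)-\mu_n(\eta_k)$ to $n\psi(e^{i\eta_k})(\xi_k-\eta_k)$: one must verify that the error produced by the non-constancy of $\psi$ is $o(\log n)$ \emph{uniformly} in $k$, which is exactly where the a priori bound $|\xi_k-\eta_k|=O(\log n/n)$ on the good event together with the (global) Lipschitz continuity and strict positivity of $\psi$ on $\mathbb{T}$ enter. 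This is the main obstacle, but it is mild. (As remarked in the text, this argument follows \cite[Proof of (1.38)]{CharlierMB} closely.)
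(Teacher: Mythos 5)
Your proof is correct and follows essentially the route the paper intends: the paper omits the argument, saying only that the lemma follows by combining Lemmas \ref{lemma: A r eps} and \ref{lemma: xk not far away from kappa k} as in \cite[Proof of (1.38)]{CharlierMB}, and that is exactly what you carry out (apply Lemma \ref{lemma: A r eps} with a smaller $\delta'$ so that the index range $[\delta n,(1-\delta)n]$ sits inside $(\mu_n(2\delta'),\mu_n(2\pi-2\delta'))$, feed the resulting event into Lemma \ref{lemma: xk not far away from kappa k}, and convert $|\mu_n(\xi_k)-k|$ into $n\psi(e^{i\eta_k})|\xi_k-\eta_k|$ using the strict positivity and Lipschitz continuity of $\psi$, with the non-constancy error $O((\log n)^2/n)$ absorbed into the $\epsilon$ margin). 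The quantifier bookkeeping (the $n$-threshold depending on $\epsilon$) matches what is actually needed for Corollary \ref{coro:rigidity}, so I see no gap.
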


\paragraph{Extending Lemmas \ref{lemma: A r eps} and \ref{lemma:rigidity ordered} to $\delta=0$.} In this paper, the support of $\mu_{V}$ is $\mathbb{T}$. Therefore, the point $1 \in \mathbb{T}$ should play no special role in the study of the global rigidity of the points, which suggests that \eqref{prob statement lemma 2.1} and \eqref{lol3} should still hold with $\delta=0$. The next lemma shows that this is indeed the case.

\begin{lemma}\label{lemma:delta=0}(Proof of \eqref{probabilistic upper bound 1}.)
For each small enough $\epsilon >0$, there exists $c>0$ such that
\begin{align*}
\mathbb P\left(\sup_{0 \leq \theta < 2\pi}\bigg|\frac{N_{n}(\theta)-\mu_{n}(\theta)}{\sigma^2_{n}}\bigg|\leq \pi(1+\epsilon) \right) \geq 1-\frac{c}{\log n}
\end{align*}
for all large enough $n$.
\end{lemma}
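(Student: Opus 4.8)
## Proof proposal for Lemma \ref{lemma:delta=0}

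The plan is to bootstrap the estimate \eqref{prob statement lemma 2.1} (valid for fixed $\delta \in (0,\pi)$) up to $\delta = 0$ by a \emph{rotational symmetry} argument combined with a union bound over finitely many rotated copies. The key observation is that, although the Fisher--Hartwig singularities single out $t_0 = 1$ in the definition of the point process \eqref{point process}, the \emph{unmarked} process $e^{i\phi_1},\dots,e^{i\phi_n}$ with density proportional to $\prod_{j<k}|e^{i\phi_k}-e^{i\phi_j}|^2\prod_j e^{-nV(e^{i\phi_j})}$ does \emph{not}: the only asymmetry comes from $V$ itself. So the correct move is not to rotate the process (which would rotate $V$), but rather to apply the already-proven Lemma \ref{lemma: A r eps} on several overlapping arcs whose union is the whole circle and which are each of the form $[\delta, 2\pi - \delta]$ after a change of reference point for $N_n$.

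Concretely, first I would fix an auxiliary angle, say $\theta_* = \pi$ (the antipode of $1$), and note that the deviation $N_n(\theta) - \mu_n(\theta)$ on an arc straddling $\theta = 0$ can be rewritten, using $N_n(\theta) = N_n(2\pi) - \#\{\phi_j \in [\theta, 2\pi)\} = n - \#\{\phi_j \in (\theta,2\pi)\}$ and the analogous decomposition of $\mu_n$, as a difference of counting functions measured from $\pi$. That is, for $\theta$ in a neighborhood of $0$ one has
\begin{align*}
N_n(\theta) - \mu_n(\theta) = \big(N_n(\pi) - \mu_n(\pi)\big) - \big(\widetilde{N}_n(\theta) - \widetilde{\mu}_n(\theta)\big),
\end{align*}
where $\widetilde N_n(\theta) := \#\{\phi_j \in (\theta, \pi]\}$ counts points on the arc from $\theta$ up to $\pi$ (going counterclockwise through $\pi$... one must be slightly careful with orientation and with the arc passing through $0$). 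The point is that $\widetilde N_n(\theta) - \widetilde\mu_n(\theta)$ is a counting-function deviation on an arc \emph{bounded away from} $t_0 = 1$ on the far side, so it is controlled by applying Lemma \ref{lemma: A r eps} (and Corollary \ref{coro:counting}) to that arc; and $N_n(\pi) - \mu_n(\pi)$ is controlled by Corollary \ref{coro:counting} at the single fixed point $\pi$. The real content, however, is that near $\theta = 0$ the classical number of points $\mu_n(\theta) = n\int_0^\theta \psi\,d\phi$ is $O(n\theta)$, hence tiny on a window of size $O(n^{-1}\log n)$; and on such a window $N_n(\theta)$ is itself $O(\log n)$ with overwhelming probability because the expected number of points there is $O(\log n)$. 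So on the small arc $[0,\delta_0] \cup [2\pi - \delta_0, 2\pi)$ with $\delta_0$ a small fixed constant, we do not even need the full $(1+\epsilon)\frac1\pi\log n$ bound — we get a cruder bound $\sup |N_n(\theta) - \mu_n(\theta)| = O(\log n)$, with the \emph{same} $O(1/\log n)$ failure probability, just from a tail estimate on a Poisson-like count. Combining the $[\delta_0, 2\pi - \delta_0]$ bound from Lemma \ref{lemma: A r eps} with this crude near-$1$ bound via a union bound, and absorbing the crude-bound constant into $\epsilon$ (legitimate because the near-$1$ contribution is $O(\log n) = o(\sigma_n^2) = o(\log n)$... wait, $\sigma_n^2 = \frac{\log n}{\pi^2}$, so $O(\log n)$ is \emph{not} $o(\sigma_n^2)$ — this is the subtlety).

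Because of that subtlety, the cleaner and more robust route — and the one I would actually carry out — is the \emph{finite-cover with sharp constant} argument: choose a large fixed integer $N$ and angles $0 = \vartheta_0 < \vartheta_1 < \dots < \vartheta_N = 2\pi$, and on each arc $[\vartheta_{i}, \vartheta_{i+1}]$ write $N_n(\theta) - \mu_n(\theta) = (N_n(\vartheta_i) - \mu_n(\vartheta_i)) + (N_n(\theta) - N_n(\vartheta_i) - (\mu_n(\theta) - \mu_n(\vartheta_i)))$; the first term is handled at finitely many fixed points (bounded away from $1$ except possibly one, handled directly by the known result or by Corollary~\ref{coro:counting}), and for the second term one runs the \emph{same} exponential-moment argument as in the proof of Lemma \ref{lemma: A r eps} — using the uniformity of Theorem \ref{thm:MGF} in $\theta$ \emph{bounded away from $1$} — but now centered so that the relevant singularity positions are $\vartheta_i, \vartheta_{i+1}$ and the test points $S_m, S_m'$ lie \emph{strictly inside} $(\vartheta_i, \vartheta_{i+1})$ and hence away from $1$. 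The only arc where $1$ is an endpoint is $[\vartheta_0,\vartheta_1] = [0, \vartheta_1]$ and $[\vartheta_{N-1}, 2\pi]$; on these one uses that $\vartheta_1$ can be taken as small as one likes, and invokes Theorem \ref{thm:MGF}/Corollary \ref{coro:counting} with $\theta$ ranging over a compact subset of $(0, 2\pi)$ that still excludes $0$ — i.e. one covers $[\vartheta_1/2, 2\pi - \vartheta_1/2]$ by Lemma \ref{lemma: A r eps} and only genuinely needs new input on $[0,\vartheta_1/2]\cup[2\pi-\vartheta_1/2, 2\pi)$, where $\mu_n$ is $O(n\vartheta_1)$ and, taking $\vartheta_1 = \vartheta_1(n) \to 0$ slowly (e.g. $\vartheta_1 \asymp n^{-1/2}$), $\mu_n$ on that arc is $o(\log n)$ while $N_n$ there is $o(\log n)$ with probability $1 - O(1/\log n)$ by a first-moment/Markov bound (the expected count being $\mu_n(\vartheta_1) = o(\log n)$), so the contribution is $o(\sigma_n^2)$ and is genuinely absorbed.

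\textbf{Main obstacle.} The delicate point is the behavior in the immediate vicinity of $t_0 = 1$: there the asymptotics of Theorem \ref{thm:MGF} (hence Corollary \ref{coro:counting}) degenerate — the variance constant has a $\log|t-1|$ term that blows up — so one cannot apply the exponential-moment bound \eqref{expmomentbound} uniformly up to $\theta = 0$. The resolution, as sketched, is that one does not need a \emph{sharp} $\frac{1+\epsilon}{\pi}\log n$ control there: one needs only that $\sup$ over that shrinking arc of $|N_n - \mu_n|$ is $o(\log n)$, which follows from a soft Markov/Chebyshev estimate on the count of points in a shrinking arc (whose expectation goes to $0$), and this is cheap. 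Making the two regimes (the Lemma~\ref{lemma: A r eps} regime away from $1$, and the soft regime near $1$) meet — i.e. choosing the cutoff $\vartheta_1 = \vartheta_1(n)$ so that the first-moment bound near $1$ still has failure probability $O(1/\log n)$ and so that $\mu_n(\vartheta_1) = o(\log n)$ — is the one genuinely new calculation, and it is routine: $\mu_n(\vartheta_1) \sim n\,\psi(1)\,\vartheta_1$, so any $\vartheta_1$ with $n\vartheta_1 = o(\log n)$ and $n\vartheta_1 \to \infty$ (for the first-moment bound to give $O(1/\log n)$ via Markov one actually wants $n\vartheta_1 \asymp \log n /\log\log n$ or so) works; I would just pick $n\vartheta_1(n) = \sqrt{\log n}$, check $\mu_n(\vartheta_1) = O(\sqrt{\log n}) = o(\sigma_n^2)$ and $\mathbb{P}(N_n(\vartheta_1) > \sqrt{\log n}\,\log\log n) \le \mu_n(\vartheta_1)/(\sqrt{\log n}\log\log n) = O(1/\log\log n)$ — and then refine the threshold to get $O(1/\log n)$, or alternatively use the exponential Chebyshev bound \eqref{expmomentbound} with the single point $\vartheta_1$ (which \emph{is} bounded away from $1$... no it isn't as $n\to\infty$; but for $\vartheta_1 \asymp n^{-1/2}\log n$, $|t-1| \asymp n^{-1/2}\log n$ and $\log|t-1| \asymp \log n$, which is still of the same order as $\sigma_n^2$ — so one must instead use that $N_n(\vartheta_1)$ itself, not its recentering, is small). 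In short, the near-$1$ arc is handled by elementary first-moment considerations, everything else by the already-established Lemma \ref{lemma: A r eps} and a union bound over a fixed finite set of reference points; the only care needed is bookkeeping the orientation and the $\delta \to 0$ limit.
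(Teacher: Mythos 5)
The route you say you would actually carry out (the finite cover with a shrinking cutoff $\vartheta_1=\vartheta_1(n)\to 0$) has a genuine gap: you cannot ``cover $[\vartheta_1/2,\,2\pi-\vartheta_1/2]$ by Lemma \ref{lemma: A r eps}''. That lemma, and the exponential-moment bound \eqref{expmomentbound} it rests on, are proved only for a \emph{fixed} $\delta\in(0,\pi)$; the constants $n_0',\mathrm{C}$ and the uniformity of Theorem \ref{thm:MGF} require the jump locations $e^{i\theta_k}$ to stay a fixed distance from $t_0=1$, precisely because the asymptotics degenerate as $e^{i\theta}\to 1$ (merging Fisher--Hartwig singularities, which this paper explicitly does not treat). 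So with any admissible choice of cutoff there remains an intermediate region, roughly $\theta\in[\vartheta_1(n),\delta_0]$ with $\delta_0$ fixed, on which neither tool applies: there $\mu_n(\theta)\asymp n\theta$ can be of order $n$, so a first-moment/Markov bound on the raw count is useless (one needs genuine cancellation in $N_n-\mu_n$), while the sharp exponential-moment machinery is unavailable because $e^{i\theta}$ is within $o(1)$ of the reference singularity at $1$. Tuning $\vartheta_1$ cannot close this: making $\vartheta_1$ small enough for the crude bound to be absorbable ($n\vartheta_1=o(\log n)$, or even $O(\log n)$ with a determinantal variance bound you do not invoke) makes the uncovered region larger, never empty. (As a side point, your near-$1$ estimate also misses the target failure probability: with $\mu_n(\vartheta_1)\asymp\sqrt{\log n}$, Markov at threshold $\sqrt{\log n}\,\log\log n$ gives $O(1/\log\log n)$, not $O(1/\log n)$, and raising the threshold to fix this makes the deviation bound exceed $\log n$, hence no longer absorbable into $\epsilon$; also $\vartheta_1\asymp n^{-1/2}$ gives $\mu_n(\vartheta_1)\asymp\sqrt n$, not $o(\log n)$.)

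The idea you float in your first paragraph and then abandon is in fact the one the paper uses, but it needs one more ingredient than you supply. The paper introduces the counting function $\tilde N_n$ anchored at the antipode $-1$ and proves, \emph{by rerunning the proof of Lemma \ref{lemma: A r eps} with this new reference point}, the same rigidity bound for $\tilde N_n-\tilde\mu_n$ uniformly on $[-\pi+\delta,\pi-\delta]$ --- an interval that contains a fixed neighbourhood of $\theta=0$. This is legitimate because $\tilde N_n(\theta)$ differs from $N_n(\cdot)-N_n(\pi)$ by a constant, so the relevant exponential moments carry zero net jump charge at $1$ and the whole Riemann--Hilbert/MGF analysis goes through with the branch points anchored at $-1$; it is \emph{not} a direct application of Lemma \ref{lemma: A r eps} as stated, which concerns $N_n$ anchored at $1$ with $\theta$ bounded away from $0$ and $2\pi$ --- the step you assert without justification. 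One then writes $N_n(\theta)-\mu_n(\theta)$ as $\tilde N_n-\tilde\mu_n$ plus the single-point deviation $N_n(\pi)-\mu_n(\pi)$, controls the latter by Chebyshev via Corollary \ref{coro:counting} (it is at most $\pi\epsilon^2\sigma_n^2$ with probability $1-O(1/\log n)$), combines with \eqref{prob statement lemma 2.1} by a union bound, and absorbs the $\epsilon^2$ correction into $\epsilon$. Without either establishing this rotated analogue of Lemma \ref{lemma: A r eps} (or some other uniform control of $N_n-\mu_n$ down to distance $o(1)$ from $\theta=0$), your argument does not prove the lemma.
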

\begin{proof}
For $-\pi \leq \theta < 0$, let $\tilde{N}_{n}(\theta):=\#\{\phi_{j}-2\pi \in (-\pi,\theta]\}$, and for $0 \leq \theta < \pi$, let $\tilde{N}_{n}(\theta):=\#(\{\phi_{j}-2\pi \in (-\pi,0]\}\cup \{\phi_{j} \in [0,\theta]\})$. For $-\pi \leq \theta < \pi$, define also $\tilde{\mu}_{n}(\theta):=n\int_{-\pi}^{\theta}d\mu_{V}(e^{i\phi})$. In the same way as for Lemma \ref{lemma: A r eps}, the following holds: for any $\delta \in (0,\pi)$, there exists $c_{1}>0$ such that for all large enough $n$ and small enough $\epsilon>0$,
\begin{align*}
\mathbb P\left(\sup_{-\pi+\delta \leq \theta \leq \pi-\delta}\bigg|\frac{\tilde{N}_{n}(\theta)-\tilde{\mu}_{n}(\theta)}{\sigma^2_{n}}\bigg|\leq \pi(1+\epsilon) \right) \geq 1-\frac{c_{1}}{\log n}.
\end{align*}
Clearly, 
\begin{align*}
\tilde{N}_{n}(\theta) = \begin{cases}
N_{n}(\theta+2\pi)-N_{n}(\pi), & \mbox{if } \theta \in (-\pi,0), \\
N_{n}(\theta)+n-N_{n}(\pi), & \mbox{if } \theta \in (0,\pi),
\end{cases} \quad \tilde{\mu}_{n}(\theta) = \begin{cases}
\mu_{n}(\theta+2\pi)-\mu_{n}(\pi), & \mbox{if } \theta \in (-\pi,0), \\
\mu_{n}(\theta)+n-\mu_{n}(\pi), & \mbox{if } \theta \in (0,\pi),
\end{cases}
\end{align*}
and therefore 
\begin{align*}
\frac{\tilde{N}_{n}(\theta)-\tilde{\mu}_{n}(\theta)}{\sigma^2_{n}} = - \frac{N_{n}(\pi)-\mu_{n}(\pi)}{\sigma_{n}^{2}} + \begin{cases}
\frac{N_{n}(\theta+2\pi)-\mu_{n}(\theta+2\pi)}{\sigma^2_{n}}, & \mbox{if } \theta \in (-\pi,0), \\[0.1cm]
\frac{N_{n}(\theta)-\mu_{n}(\theta)}{\sigma^2_{n}}, & \mbox{if } \theta \in (0,\pi).
\end{cases}
\end{align*}
Thus, for all large enough $n$,
\begin{align*}
\mathbb P\left(\sup_{\theta \in [0,2\pi)\setminus (\pi-\delta,\pi+\delta)}\bigg|\frac{N_{n}(\theta)-\mu_{n}(\theta)}{\sigma^2_{n}}\bigg|\leq \pi(1+\epsilon) + \bigg|\frac{N_{n}(\pi)-\mu_{n}(\pi)}{\sigma_{n}^{2}} \bigg| \right) \geq 1-\frac{c_{1}}{\log n}.
\end{align*}
Combining the above with \eqref{prob statement lemma 2.1} (with $c$ replaced by $c_{2}$), we obtain
\begin{align}\label{lol4}
\mathbb P\left(\sup_{\theta \in [0,2\pi)}\bigg|\frac{N_{n}(\theta)-\mu_{n}(\theta)}{\sigma^2_{n}}\bigg|\leq \pi(1+\epsilon) + \bigg|\frac{N_{n}(\pi)-\mu_{n}(\pi)}{\sigma_{n}^{2}} \bigg| \right) \geq 1- \frac{c_{1}+c_{2}}{\log n}.
\end{align}
Let $X_{n}:= (N_{n}(\pi)-\mu_{n}(\pi))/\sigma_{n}$. By Corollary \ref{coro:counting}, $\mathbb{E}[X_{n}] = \bigO(\frac{\sqrt{\log n}}{n})$ and $\mbox{Var}[X_{n}] \leq 2$ for all large enough $n$. Hence, by Chebyshev's inequality, for any fixed $\ell > 0$, $\mathbb{P}\big(\frac{|X_{n}|}{\sigma_{n}} \geq \ell\big) \leq \frac{3}{\ell^{2}\sigma_{n}^{2}}$ for all large enough $n$. Applying this inequality with $\ell= \pi(1+(1+\epsilon)\epsilon) - \pi (1+\epsilon) = \pi \epsilon^{2}$ we see that if $\mathbb{P}(A)$ denotes the left-hand side of \eqref{lol4}, then
\begin{align*}
\mathbb{P}(A) 
\leq \mathbb{P}\Big(A \cap \big\{\tfrac{|X_{n}|}{\sigma_{n}} < \ell\big\}\Big) + \mathbb{P}\Big(\tfrac{|X_{n}|}{\sigma_{n}} \geq \ell\Big)
\leq  \mathbb{P}\Big(A \cap \big\{\tfrac{|X_{n}|}{\sigma_{n}} < \ell\big\}\Big) + \frac{3}{\ell^{2}\sigma_{n}^{2}}.
\end{align*}
Together with \eqref{lol4}, this gives
\begin{align*}
\mathbb P\bigg(\sup_{\theta \in [0,2\pi)}\bigg|\frac{N_{n}(\theta)-\mu_{n}(\theta)}{\sigma^2_{n}}\bigg|\leq \pi\big(1+(1+\epsilon)\epsilon\big) \bigg) 
& \geq \mathbb{P}\Big(A \cap \big\{\tfrac{|X_{n}|}{\sigma_{n}} < \ell\big\}\Big)
\geq \mathbb{P}(A) - \frac{3}{\ell^{2}\sigma_{n}^{2}}
	\\
& \geq 1- \frac{c_{1}+c_{2}}{\log n} - \frac{3}{\ell^{2}\sigma_{n}^{2}}
\geq 1-\frac{c_{3}}{\log n},
\end{align*}
for some $c_{3}=c_{3}(\epsilon)>0$, which proves the claim.
\end{proof}
The upper bound \eqref{probabilistic upper bound 2} can be proved using the same idea as in the proof of Lemma \ref{lemma:delta=0}.

\appendix 

\section{Equilibrium measure}\label{section:equilibrium measure}
Assume that $\mu_{V}$ is supported on $\mathbb{T}$. We make the ansatz that $\mu_{V}$ is of the form \eqref{uni: dmu} for some $\psi$. Let $g$ be as in \eqref{g-def}. Substituting \eqref{g+ + g-} in \eqref{var equality} and differentiating, we obtain
\begin{align*}
g_{+}'(z)+g_{-}'(z) = V'(z) + \frac{1}{z}, \qquad z \in \mathbb{T}.
\end{align*}
Since $g'(z) = \frac{1}{z}+\bigO(z^{-2})$ as $z \to \infty$, we deduce that
\begin{align}\label{gprime}
g'(z) = -\frac{\varphi(z)}{2\pi i}\int_{\mathbb{T}}\frac{\frac{1}{s}+V'(s)}{s-z}ds, \qquad z \in \mathbb{C}\setminus \mathbb{T},
\end{align}
where $\varphi(z) := +1$ if $|z|>1$ and $\varphi(z) := -1$ if $|z|<1$. Using \eqref{gprime} in \eqref{uni: gprime}, it follows that
\begin{align}\label{lol1}
 -\frac{2\pi}{z}\psi(z) = \frac{1}{\pi i}\dashint_{\mathbb{T}} \frac{\frac{1}{s}+V'(s)}{s-z}ds, \qquad z \in \mathbb{T}.
\end{align}
Recall from \eqref{V as a Laurent series} that $V$ is analytic in the open annulus $U$ and real-valued on $\mathbb{T}$, and therefore
\begin{align*}
V(z) = V_{0} + \sum_{k \geq 1} (V_{k}z^{k}+\overline{V_{k}}z^{-k}), \qquad V'(z) = \sum_{k \geq 1} (k V_{k}z^{k-1}- k\overline{V_{k}}z^{-k-1}), \qquad z \in U.
\end{align*}
(It is straightforward to check that the series $\sum_{k \geq 1} k V_{k}z^{k-1}$ and $\sum_{k \geq 1} k\overline{V_{k}}z^{-k-1}$ are convergent in $U$.) Direct computation gives
\begin{align*}
\dashint_{\mathbb{T}} \frac{\frac{1}{s}+V'(s)}{s-z}ds = - \frac{\pi i}{z} + \pi i \sum_{k \geq 1} (k V_{k}z^{k-1}+ k\overline{V_{k}}z^{-k-1}), \qquad z \in \mathbb{T},
\end{align*}
which, by \eqref{lol1}, proves that $\psi$ is given by \eqref{psi solved explicitly}. Since the right-hand side of \eqref{psi solved explicitly} is positive on $\mathbb{T}$ (by our assumption that $V$ is regular), we conclude that $\psi(e^{i\theta})d\theta$ is a probability measure satisfying the Euler--Lagrange condition \eqref{var equality}. Therefore $\psi(e^{i\theta})d\theta$ minimizes \eqref{functional}, i.e. $\psi(e^{i\theta})d\theta$ is the equilibrium measure associated to $V$. Since the equilibrium measure is unique \cite{SaTo}, this proves \eqref{uni: dmu}.

\section{Confluent hypergeometric model RH problem}\label{appendix:hypergeometric RHP}
	
\begin{itemize}
	\item[(a)] $\Phi_{\mathrm{HG}} : \mathbb{C} \setminus \Sigma_{\mathrm{HG}} \rightarrow \mathbb{C}^{2 \times 2}$ is analytic, where $\Sigma_{\mathrm{HG}}$ is shown in Figure \ref{Fig:HG}.
	\item[(b)] For $z \in \Gamma_{k}$ (see Figure \ref{Fig:HG}), $k = 1,\dots,8$, $\Phi_{\mathrm{HG}}$ obeys the jump relations
	\begin{equation}\label{jumps PHG3}
		\Phi_{\mathrm{HG},+}(z) = \Phi_{\mathrm{HG},-}(z)J_{k},
	\end{equation}
	where
	\begin{align*}
		& J_{1} = \begin{pmatrix}
		0 & e^{-i\pi \beta} \\ -e^{i\pi\beta} & 0
		\end{pmatrix}, \quad J_{5} = \begin{pmatrix}
		0 & e^{i\pi\beta} \\ -e^{-i\pi\beta} & 0
		\end{pmatrix},\quad J_{3} = J_{7} = \begin{pmatrix}
		e^{\frac{i\pi\alpha}{2}} & 0 \\ 0 & e^{-\frac{i\pi\alpha}{2}}
		\end{pmatrix}, \\
		& J_{2} = \begin{pmatrix}
		1 & 0 \\ e^{-i\pi\alpha}e^{i\pi\beta} & 1
		\end{pmatrix}\hspace{-0.1cm}, \hspace{-0.3cm} \quad J_{4} = \begin{pmatrix}
		1 & 0 \\ e^{i\pi\alpha}e^{-i\pi\beta} & 1
		\end{pmatrix}\hspace{-0.1cm}, \hspace{-0.3cm} \quad J_{6} = \begin{pmatrix}
		1 & 0 \\ e^{-i\pi\alpha}e^{-i\pi\beta} & 1
		\end{pmatrix}\hspace{-0.1cm}, \hspace{-0.3cm} \quad J_{8} = \begin{pmatrix}
		1 & 0 \\ e^{i\pi\alpha}e^{i\pi\beta} & 1
		\end{pmatrix}.
	\end{align*}
\item[(c)] As $z \to \infty$, $z \notin \Sigma_{\mathrm{HG}}$, we have
\begin{equation}\label{Asymptotics HG}
	\Phi_{\mathrm{HG}}(z) = \left( I + \sum_{k=1}^{\infty} \frac{\Phi_{\mathrm{HG},k}}{z^{k}} \right) z^{-\beta\sigma_{3}}e^{-\frac{z}{2}\sigma_{3}}M^{-1}(z),
\end{equation}
where 
\begin{equation}\label{def of tau}
	\Phi_{\mathrm{HG},1} = \Big(\beta^{2}-\frac{\alpha^{2}}{4}\Big) \begin{pmatrix}
	-1 & \tau(\alpha,\beta) \\ - \tau(\alpha,-\beta) & 1
	\end{pmatrix}, \qquad \tau(\alpha,\beta) = -\frac{\Gamma\left( \frac{\alpha}{2}-\beta \right)}{\Gamma\left( \frac{\alpha}{2}+\beta + 1 \right)},
\end{equation}
and
\begin{equation}
	M(z) = \left\{ \begin{array}{l l}
	\displaystyle e^{\frac{i\pi\alpha}{4} \sigma_{3}}e^{- i\pi\beta  \sigma_{3}}, & \displaystyle \frac{\pi}{2} < \arg z < \pi, \\
	\displaystyle e^{-\frac{i\pi\alpha}{4} \sigma_{3}}e^{-i\pi\beta  \sigma_{3}}, & \displaystyle \pi < \arg z < \frac{3\pi}{2}, \\
	e^{\frac{i\pi\alpha}{4}\sigma_{3}} \begin{pmatrix}
	0 & 1 \\ -1 & 0
	\end{pmatrix}, & -\frac{\pi}{2} < \arg z < 0, \\
	e^{-\frac{i\pi\alpha}{4}\sigma_{3}} \begin{pmatrix}
	0 & 1 \\ -1 & 0
	\end{pmatrix}, & 0 < \arg z < \frac{\pi}{2}.
	\end{array} \right.
\end{equation}
In \eqref{Asymptotics HG}, $z^{-\beta}$ has a cut along $i\mathbb{R}^{-}$ so that $z^{-\beta} = |z|^{-\beta}e^{-\beta i \arg(z)}$ with $-\frac{\pi}{2} < \arg z < \frac{3\pi}{2}$. As $z \to 0$, we have
\begin{equation}\label{lol 35}
\begin{array}{l l}
	\displaystyle \Phi_{\mathrm{HG}}(z) = \left\{ \begin{array}{l l}
	\begin{pmatrix}
	\bigO(1) & \bigO(\log z) \\
	\bigO(1) & \bigO(\log z)
	\end{pmatrix}, & \mbox{if } z \in II \cup III \cup VI \cup VII, \\
	\begin{pmatrix}
	\bigO(\log z) & \bigO(\log z) \\
	\bigO(\log z) & \bigO(\log z)
	\end{pmatrix}, & \mbox{if } z \in I\cup IV \cup V \cup VIII,
    \end{array} \right., & \displaystyle \mbox{ if } \re \alpha = 0, \\[0.9cm]
		
	\displaystyle \Phi_{\mathrm{HG}}(z) = \left\{ \begin{array}{l l}
	\begin{pmatrix}
	\bigO(z^{\frac{\alpha}{2}}) & \bigO(z^{-\frac{\alpha}{2}}) \\
	\bigO(z^{\frac{\alpha}{2}}) & \bigO(z^{-\frac{\alpha}{2}})
	\end{pmatrix}, & \mbox{if } z \in II \cup III \cup VI \cup VII, \\
	\begin{pmatrix}
	\bigO(z^{-\frac{\alpha}{2}}) & \bigO(z^{-\frac{\alpha}{2}}) \\
	\bigO(z^{-\frac{\alpha}{2}}) & \bigO(z^{-\frac{\alpha}{2}})
	\end{pmatrix}, & \mbox{if } z \in I\cup IV \cup V \cup VIII,
	\end{array} \right. , & \displaystyle \mbox{ if } \re \alpha > 0, \\[0.9cm]
		
	\displaystyle \Phi_{\mathrm{HG}}(z) = \begin{pmatrix}
	\bigO(z^{\frac{\alpha}{2}}) & \bigO(z^{\frac{\alpha}{2}}) \\
	\bigO(z^{\frac{\alpha}{2}}) & \bigO(z^{\frac{\alpha}{2}}) 
	\end{pmatrix}, & \displaystyle \mbox{ if } \re \alpha < 0.
\end{array}
\end{equation}
\end{itemize}
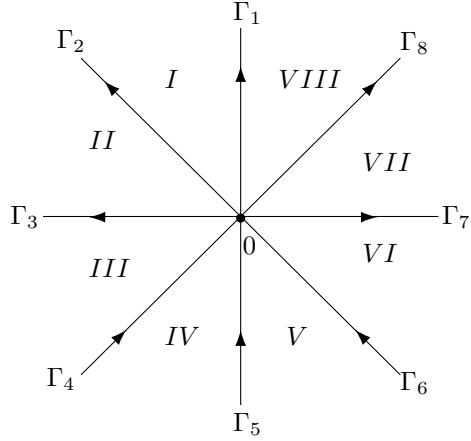
\begin{figure}[t!]
\begin{center}
\setlength{\unitlength}{1truemm}
\begin{picture}(100,55)(-5,10)
\put(50,40){\line(-1,0){26}}
\put(50,40){\line(1,0){26}}        
\put(50,39.8){\thicklines\circle*{1.2}}
\put(50,40){\line(-0.5,0.5){21}}
\put(50,40){\line(-0.5,-0.5){21}}
\put(50,40){\line(0.5,0.5){21}}
\put(50,40){\line(0.5,-0.5){21}}
\put(50,40){\line(0,1){25}}
\put(50,40){\line(0,-1){25}}
\put(50.3,35){$0$}
\put(76.5,39){$\Gamma_7$}        
\put(71,62){$\Gamma_8$}        
\put(49,66){$\Gamma_1$}        
\put(25.8,62.3){$\Gamma_2$}        
\put(19.7,39){$\Gamma_3$}        
\put(24.5,17.5){$\Gamma_4$}
\put(49,11.5){$\Gamma_5$}
\put(71,17){$\Gamma_6$}        
\put(30,39.9){\thicklines\vector(-1,0){.0001}}
\put(68,39.9){\thicklines\vector(1,0){.0001}}
\put(32,58){\thicklines\vector(-0.5,0.5){.0001}}
\put(35,25){\thicklines\vector(0.5,0.5){.0001}}
\put(68,58){\thicklines\vector(0.5,0.5){.0001}}
\put(65,25){\thicklines\vector(-0.5,0.5){.0001}}
\put(50,60){\thicklines\vector(0,1){.0001}}
\put(50,25){\thicklines\vector(0,1){.0001}}
\put(40,57){$I$}
\put(30,49){$II$}
\put(30,32){$III$}
\put(40,23){$IV$}
\put(56,23){$V$}
\put(66,34){$VI$}
\put(66,46){$VII$}
\put(55,57){$VIII$}
\end{picture}
\caption{\label{Fig:HG}The jump contour $\Sigma_{\mathrm{HG}}$ for $\Phi_{\mathrm{HG}}(z)$. The ray $\Gamma_{k}$ is oriented from $0$ to $\infty$, and forms an angle with $\mathbb{R}^{+}$ which is a multiple of $\frac{\pi}{4}$.}
\end{center}
\end{figure}
This model RH problem was first introduced and solved explicitly in \cite{ItsKrasovsky} for the case $\alpha = 0$, and then in \cite{FouMarSou, DIK2011} for the general case. The constant matrices $\Phi_{\mathrm{HG},k}$ depend analytically on $\alpha$ and $\beta$ (they can be found explicitly, see e.g. \cite[eq. (56)]{FouMarSou}). Consider the matrix
\begin{equation}\label{phi_HG}
	\widehat{\Phi}_{\mathrm{HG}}(z) = \begin{pmatrix}
	\frac{\Gamma(1 + \frac{\alpha}{2}-\beta)}{\Gamma(1+\alpha)}G(\frac{\alpha}{2}+\beta, \alpha; z)e^{-\frac{i\pi\alpha}{2}} & -\frac{\Gamma(1 + \frac{\alpha}{2}-\beta)}{\Gamma(\frac{\alpha}{2}+\beta)}H(1+\frac{\alpha}{2}-\beta,\alpha;ze^{-i\pi }) \\
	\frac{\Gamma(1 + \frac{\alpha}{2}+\beta)}{\Gamma(1+\alpha)}G(1+\frac{\alpha}{2}+\beta,\alpha;z)e^{-\frac{i\pi\alpha}{2}} & H(\frac{\alpha}{2}-\beta,\alpha;ze^{-i\pi })
	\end{pmatrix} e^{-\frac{i\pi\alpha}{4}\sigma_{3}},
\end{equation}
where $G$ and $H$ are related to the Whittaker functions:
\begin{equation}\label{relation between G and H and Whittaker}
	G(a,\alpha;z) = \frac{M_{\kappa,\mu}(z)}{\sqrt{z}}, \quad H(a,\alpha;z) = \frac{W_{\kappa,\mu}(z)}{\sqrt{z}}, \quad \mu = \frac{\alpha}{2}, \quad \kappa = \frac{1}{2}+\frac{\alpha}{2}-a.
\end{equation}
The solution $\Phi_{\mathrm{HG}}$ is given by
\begin{equation}
	\Phi_{\mathrm{HG}}(z) = \left\{ \begin{array}{l l}
	\widehat{\Phi}_{\mathrm{HG}}(z)J_{2}^{-1}, & \mbox{ for } z \in I, \\
	\widehat{\Phi}_{\mathrm{HG}}(z), & \mbox{ for } z \in II, \\
	\widehat{\Phi}_{\mathrm{HG}}(z)J_{3}, & \mbox{ for } z \in III, \\
	\widehat{\Phi}_{\mathrm{HG}}(z)J_{3}J_{4}^{-1}, & \mbox{ for } z \in IV, \\
	\widehat{\Phi}_{\mathrm{HG}}(z)J_{2}^{-1}J_{1}^{-1}J_{8}^{-1}J_{7}^{-1}J_{6}, & \mbox{ for } z \in V, \\
	\widehat{\Phi}_{\mathrm{HG}}(z)J_{2}^{-1}J_{1}^{-1}J_{8}^{-1}J_{7}^{-1}, & \mbox{ for } z \in VI, \\
	\widehat{\Phi}_{\mathrm{HG}}(z)J_{2}^{-1}J_{1}^{-1}J_{8}^{-1}, & \mbox{ for } z \in VII, \\
	\widehat{\Phi}_{\mathrm{HG}}(z)J_{2}^{-1}J_{1}^{-1}, & \mbox{ for } z \in VIII. \\
	\end{array} \right.
\end{equation}

\section*{Acknowledgements}

The work of all three authors was supported by the European Research Council, Grant Agreement No. 682537. CC also acknowledges support from the Swedish Research Council, Grant No. 2021-04626. JL also acknowledges support from the Swedish Research Council, Grant No. 2021-03877, and the Ruth and Nils-Erik Stenb\"ack Foundation. We are very grateful to the referees for valuable suggestions, and in particular for providing us with a proof of \eqref{prob statement lemma 2.1}. 


\begin{thebibliography}{99}

\bibitem{ABB2017}
L-P. Arguin, D. Belius and P. Bourgade, Maximum of the characteristic polynomial of random unitary matrices, {\em Comm. Math. Phys.} {\bf 349} (2017), 703--751.

\bibitem{BaikOscillatory} J. Baik, Circular unitary ensemble with highly oscillatory potential, arXiv:1306.0216.

\bibitem{BaikDeiftJohansson} J. Baik, P. Deift and K. Johansson, On the distribution of the length of the longest increasing subsequence of random permutations, {\em J. Amer. Math. Soc.} {\bf 12} (1999), 1119--1178.

\bibitem{Basor} E. Basor, Asymptotic formulas for Toeplitz determinants, {\em Trans.
Amer. Math. Soc.} {\bf 239}  (1978), 33--65.

\bibitem{BasMor} E. Basor and K.E. Morrison, The Fisher--Hartwig conjecture and Toeplitz eigenvalues, \textit{Linear Algebra Appl.} \textbf{202} (1994), 129--142.

\bibitem{BT1991}
E.L. Basor and C.A. Tracy, The Fisher--Hartwig conjecture and generalizations. 
Current problems in statistical mechanics, {\it Phys. A} {\bf 177} (1991), 167--173.

\bibitem{BasorSzego} E. Basor, A brief history of the strong Szeg\H{o} limit theorem, \textit{Oper. Theory Adv. Appl.} \textbf{222} (2012), Birkh\"{a}user/Springer, Basel.
		
\bibitem{BerWebbWong} N. Berestycki, C. Webb and M.D. Wong, Random Hermitian matrices and Gaussian multiplicative chaos, \textit{Probab. Theory Related Fields}, \textbf{172} (2018), 103--189.

\bibitem{Bill} P. Billingsley, Probability and measure. Anniversary edition. Wiley Series in Probability and Statistics, \textit{John Wiley and Sons, Inc., Hoboken, NJ} (2012).


\bibitem{B1995}
A. B\"ottcher, The Onsager formula, the Fisher--Hartwig conjecture, and their influence on research into Toeplitz operators, {\it J. Stat. Phys.} {\bf 78} (1995), 575--584. 

\bibitem{BS1986}
A. B\"ottcher and B. Silbermann, Toeplitz operators and determinants generated by symbols with one Fisher--Hartwig singularity, {\em Math. Nachr.}  {\bf 127}  (1986), 95--123.

\bibitem{BF2022} P. Bourgade and H. Falconet, Liouville quantum gravity from random matrix dynamics, arXiv:2206.03029.

\bibitem{BS2021} S.-S. Byun and S.-M. Seo, Random normal matrices in the almost-circular regime, arXiv:2112.11353, to appear in \textit{Bernoulli}.

\bibitem{Charlier} C. Charlier, Asymptotics of Hankel determinants with a one-cut regular potential and Fisher--Hartwig singularities, {\em Int. Math. Res. Not.} {\bf 2018} (2018), 62 pp. 
 
\bibitem{CharlierMB} C. Charlier, Asymptotics of Muttalib--Borodin determinants with Fisher--Hartwig singularities, \textit{Selecta Math.} \textbf{28} (2022), no. 3, Paper No. 50.

\bibitem{CharlierAdv} C. Charlier, Asymptotics of determinants with a rotation-invariant weight and discontinuities along circles, \textit{Adv. Math.} \textbf{408} (2022), Paper No. 108600. 

\bibitem{ChCl2} C. Charlier and T. Claeys, Thinning and conditioning of the Circular Unitary Ensemble, \textit{Random Matrices Theory Appl.} \textbf{6} (2017), 51 pp.

\bibitem{CFWW2021} C. Charlier, B. Fahs, C. Webb and M.D. Wong, Asymptotics of Hankel determinants with a multi-cut regular potential and Fisher--Hartwig singularities, arXiv:2111.08395.

\bibitem{ChGha} C. Charlier and R. Gharakhloo, Asymptotics of Hankel determinants with a Laguerre-type or Jacobi-type potential and Fisher--Hartwig singularities, \textit{Adv. Math.} \textbf{383} (2021), 107672, 69 pp.

\bibitem{CFLW2021} T. Claeys, B. Fahs, G. Lambert and C. Webb, How much can the eigenvalues of a random Hermitian matrix fluctuate? \textit{Duke Math. J.} \textbf{170} (2021), no. 9, 2085--2235. 

\bibitem{ClKr} T. Claeys and I. Krasovsky, Toeplitz determinants with merging singularities, \textit{Duke Math. J.} \textbf{164} (2015), no. 15, 2897--2987. 

\bibitem{CostinLebowitz} A. Costin, J.L. Lebowitz, Gaussian fluctuations in random matrices, \textit{Phys. Rev. Lett.} \textbf{75} (1995), 69--72.

\bibitem{DXZ2022} D. Dai, S.-X. Xu and L. Zhang, On the deformed Pearcey determinant, \textit{Adv. Math.} \textbf{400} (2022), No. 108291.


\bibitem{DIK2011} P. Deift, A. Its and I. Krasovsky,
Asymptotics of Toeplitz, Hankel, and Toeplitz+Hankel determinants with Fisher--Hartwig singularities, \textit{Ann. Math.} \textbf{174} (2011), 1243--1299.

\bibitem{DIK2013} 
P. Deift, A. Its and I. Krasovsky, Toeplitz matrices and Toeplitz determinants under the impetus of the Ising model: some history and some recent results, {\it Comm. Pure Appl. Math.}  {\bf 66} (2013), 1360--1438. 

\bibitem{DeiftItsKrasovsky} P. Deift, A. Its and I. Krasovsky, On the asymptotics of a Toeplitz determinant with singularities, \textit{MSRI Publications} {\bf 65} (2014), Cambridge University Press.

\bibitem{Deiftetal}
P. Deift, T. Kriecherbauer, K. T-R. McLaughlin, S. Venakides and X. Zhou, Strong asymptotics of orthogonal polynomials with respect to exponential weights, {\em Comm. Pure Appl. Math.} \textbf{52} (1999), 1491--1552.

\bibitem{DZ1993}
P. Deift and X. Zhou, A steepest descent method for oscillatory Riemann-Hilbert problems. Asymptotics for the MKdV equation, {\it Ann. of Math.} {\bf 137} (1993), 295--368.

\bibitem{Ehr} T. Ehrhardt, A status report on the asymptotic behavior of Toeplitz determinants with Fisher--Hartwig singularities, \textit{Oper. Theory Adv. Appl.} \textbf{124} (2001), 217--241.

\bibitem{ErdosYauYin}
L. Erd\H{o}s, H.-T. Yau, and J. Yin, Rigidity of eigenvalues of generalized Wigner matrices, {\em Adv. Math.} {\bf 229} (2012), no. 3, 1435--1515.

\bibitem{Fahs} B. Fahs, Uniform asymptotics of Toeplitz determinants with Fisher--Hartwig singularities, \textit{Comm. Math. Phys.} \textbf{383} (2021), no. 2, 685--730. 

\bibitem{FisherHartwig} M. E. Fisher and R. E. Hartwig, Toeplitz determinants: Some applications, theorems, and conjectures, \textit{Advan. Chem. Phys.} \textbf{15} (1968), 333--353.

\bibitem{FokasItsKitaev} A.S. Fokas, A.R. Its and A.V. Kitaev, The isomonodromy approach to matrix models in
2D quantum gravity, \textit{Comm. Math. Phys.} \textbf{147} (1992), 395 430.

\bibitem{ForRods} P.J. Forrester, Charged rods in a periodic background: a solvable model, \textit{J. Statist. Phys.} \textbf{42} (1986), no. 5-6, 871--894.

\bibitem{FouMarSou} A. Foulqui\'{e} Moreno, A. Martinez-Finkelshtein and V.L. Sousa, On a conjecture of A. Magnus concerning the asymptotic behavior of the recurrence coefficients of the generalized Jacobi polynomials, \textit{J. Approx. Theory}, \textbf{162}(2010), 807--831.


\bibitem{Gustavsson} J. Gustavsson, Gaussian fluctuations of eigenvalues in the GUE, {\em Ann. Inst. H. Poincare Probab. Statist.} {\bf 41} (2005), 151--178.
		
\bibitem{ItsKrasovsky} A. Its and I. Krasovsky, Hankel determinant and orthogonal polynomials for the Gaussian weight with a jump, \textit{Contemporary Mathematics} \textbf{458} (2008), 215--248.

\bibitem{Jo1988} K. Johansson, On Szeg\H{o}'s asymptotic formula for Toeplitz determinants and generalizations, \textit{Bull. Sci. Math.} (2) \textbf{112} (1988), no. 3, 257--304.


\bibitem{KO1949} 
B. Kaufman and L. Onsager, Crystal statistics. III. Short-range order in a binary Ising lattice. {\it Phys. Rev.} {\bf 76} (1949), 1244--1252.

		
\bibitem{Krasovsky} I. Krasovsky, Correlations of the characteristic polynomials in the Gaussian unitary ensemble or a singular Hankel determinant, \textit{Duke Math J.} \textbf{139} (2007), 581--619. 


\bibitem{L1964}
A. Lenard, Momentum distribution in the ground state of the one-dimensional system of impenetrable Bosons, {\it J. Math. Phys.}  {\bf 5} (1964), 930--943. 

				


\bibitem{SaTo} E. B. Saff and V. Totik, {\em Logarithmic Potentials with External Fields}, Springer-Verlag (1997).

\bibitem{SDMS2020} N.R. Smith, P. Le Doussal, S.N. Majumdar and G. Schehr, Counting statistics for non-interacting fermions in a $d$-dimensional potential, \textit{Phys. Rev. E} \textbf{103} (2021), L030105.

\bibitem{SoshnikovSineAiryBessel} A. Soshnikov, Gaussian fluctuation for the number of particles in Airy, Bessel, sine, and other determinantal random point fields, \textit{J. Statist. Phys.} \textbf{100} (2000), 491--522.
		
\bibitem{S1915}
G. Szeg\H{o}, Ein Grenzwertsatz \"{u}ber die Toeplitzschen Determinanten einer reellen positiven Funktion, {\it Math. Ann.}  {\bf 76} (1915), 490--503. 

\bibitem{S1952}
G. Szeg\H{o}, On certain Hermitian forms associated with the Fourier series of a positive function, {\it Comm. S\'{e}m. Math. Univ. Lund [Medd. Lunds Univ. Mat. Sem.]} {\bf 1952} (1952), Tome Suppl\'ementaire, 228--238. 

\bibitem{Widom2} H. Widom. Toeplitz determinants with singular generating functions. {\em Amer. J. Math.} {\bf 95} (1973), 333--383.

\end{thebibliography}
\footnotesize

\end{document}